\newtheorem{theorem}{Theorem}[section]
\newtheorem{corollary}[theorem]{Corollary}
\newtheorem{definition}[theorem]{Definition}
\newtheorem{lemma}[theorem]{Lemma}
\newcommand{\R}{\mathbb{R}}
\newcommand{\M}{\Omega}
\newcommand{\ek}{\phi}
\newcommand{\g}{\mathrm{g}}
\newcommand{\dd}{\mathrm{d}}
\newcommand{\dv}{\,\mathrm{dv}^{n}}
\newcommand{\dvv}{\,\mathrm{dv}^{2n-1}}
\newcommand{\ds}{\,\mathrm{d\sigma}^{n-1}}
\newcommand{\dss}{\,\mathrm{d\sigma}^{2n-2}}
\newcommand{\I}{\mathcal{I}}
\newcommand{\p}{\partial}
\newcommand{\norm}[1]{\|#1\|}
\newcommand{\abs}[1]{|#1|}
\newcommand{\set}[1]{\left\{#1\right\}}
\newcommand{\para}[1]{\left(#1\right)}
\newcommand{\seq}[1]{\left<#1\right>}
\newcommand{\To}{\longrightarrow}
\newcommand{\nr}{\aleph}
\newcommand{\dive}{\textrm{div}}
\newcommand{\bea}{\begin{eqnarray}}
\newcommand{\eea}{\end{eqnarray}}
\newcommand{\beas}{\begin{eqnarray*}}
\newcommand{\eeas}{\end{eqnarray*}}
\newcommand{\bel}{\begin{equation} \label}
\newcommand{\ee}{\end{equation}}
\begin{document}

\title[Recovery of a metric tensor from the partial hyperbolic D-to-N map]{Stable recovery of a metric tensor from the partial hyperbolic Dirichlet to Neumann map}
\author[M. Bellassoued]{Mourad~Bellassoued}
\address{University of Tunis El Manar, National Engineering School of Tunis, ENIT-LAMSIN, B.P. 37, 1002 Tunis, Tunisia}
\email{mourad.bellassoued@enit.utm.tn}
\date{\today}
\subjclass[2010]{Primary 35R30, 35L05}
\keywords{Inverse problem, Riemannian manifold, Stability estimate, Dirichlet-to-Neumann map, geodesical ray transform}
\begin{abstract}
In this paper we consider the inverse problem of determining on a compact Riemannian manifold
the metric tensor in the wave equation
 with Dirichlet  data from measured Neumann boundary observations. This information is enclosed in the dynamical
Dirichlet-to-Neumann map associated to the wave equation. We prove in dimension $n\geq 2$ that the
knowledge of the partial Dirichlet-to-Neumann map for the wave equation uniquely determines
the metric tensor and we establish logarithm-type stability.
\end{abstract}
\maketitle
\section{Introduction and main results}
\subsection{Statement of the problem}
Let $\M$ be a bounded and connected domain of $\R^n$, $n\geq 2$, with smooth boundary $\Gamma=\p\M$. Assume that $\g=(\g_{jk})$ is given Riemannian metric in $\M$, with symmetric and smooth coefficients $\g_{jk}\in\mathcal{C}^k(\M)$, $k\geq 2$, and 
\begin{equation}\label{1.1}
\g(x)=\sum_{j,k=1}^n\g_{jk}(x)dx_j\otimes dx_k.
\end{equation}
We let $\Delta_\g$ denote the Laplace-Beltrami operator on $\M$, given by
\begin{equation}\label{1.2}
\Delta_\g=\frac{1}{\sqrt{\abs{\g}}}\sum_{j,k=1}^n\frac{\p}{\p
x_j}\para{\sqrt{\abs{\g}}\,\g^{jk}\frac{\p}{\p x_k}}.
\end{equation}
Here $(\g^{jk})$ is the inverse of the metric $\g$ and $\abs{\g}=\det(\g_{jk})$. A summary of the main Riemannian geometric notions needed in this paper is provided in Section 2. Given $T>0$, we denote $Q=(0,T)\times\M$ and $\Sigma=(0,T)\times \Gamma$. We consider the
following initial boundary value problem for the wave equation,
\begin{equation}\label{1.3}
\left\{
\begin{array}{llll}
\para{\partial^2_t-\Delta_\g}u=0  & \textrm{in }\; Q,\cr
u(0,\cdot)=\p_tu(0,\cdot)=0 & \textrm{in }\; \M ,\cr
u=f & \textrm{on } \; \Sigma.
\end{array}
\right.
\end{equation}
The purpose of this paper is to study the inverse problem of determining the metric tensor $\g$ appearing in the wave equation (\ref{1.3}). 
Before going to the main context of the paper, let us briefly mention about the well-posedness of the forward problem (\ref{1.3}). Following \cite{BF}, we define the space $H^{1,1}_0(\Sigma)$ by
\begin{equation}\label{1.4}
H^{1,1}_0(\Sigma)=\set{f\in L^2(0,T;H^2(\Gamma))\cap H^1(0,T;L^2(\Gamma)),\,\textrm{and}\,\, f(0,x)=0,\,\textrm{for}\,x\in\Gamma}.
\end{equation}
As shown in \cite{BF} that for $f\in H^{1,1}_0(\Sigma)$, system (\ref{1.3}) admits a unique solution
\begin{equation}\label{1.5}
u\in \mathcal{C}^1([0,T];L^2(\M))\cap \mathcal{C}([0,T];H^1(\M)).
\end{equation}
Furthermore, there is a constant $C>0$ such that
\begin{equation}\label{1.6}
\norm{\p_\nu u}_{L^2(\Sigma)}\leq C\norm{f}_{H^{1,1}(\Sigma)}.
\end{equation}
The normal derivative is given by
\begin{equation}\label{1.7}
\p_\nu u:=\seq{\nabla u,\nu}=\sum_{j,k=1}^n\g^{jk}\nu_j\frac{\p u}{\p x^k}
\end{equation}
where $\nu$ is the unit outward vector field to $\Gamma$. In particular the following operator, usually called the Dirichlet to Neumann map
\begin{equation}\label{1.8}
\begin{array}{ccc}
\Lambda_{\g}: H_0^{1,1}(\Sigma)&\longrightarrow& L^{2}(\Sigma)\\
f &\longmapsto &\Lambda_{\g}(f):=\p_{\nu}u,
\end{array}
\end{equation}
is bounded from $H_0^{1,1}(\Sigma)$ into $L^{2}(\Sigma)$.
\smallskip

It's known that the problem of finding a Riemannian metric is interesting for theory and applications. For example, the problem of finding a metric from the corresponding Dirichlet to Neumann map  is the basic question of the investigation of inverse problems for partial differential equations and there are many applications of this problem, particularly, in geophysics in connection with the study of distribution of the velocities of propagation of elastic waves inside the terrestrial globe. ln the study of the problem of restoring a Riemannian metric a new type problem of integral geometry appears (geodesic ray transform). It is known that the integral geometry problem is the mathematical base of tomography. This problem has many applications in various fields: the problem of the forecasting of earthquakes, diagnostics of plasma, problem of photometry, fiber optics and etc. The connection of the problem of integral geometry for differential forms with the inverse problems for equations of the hyperbolic type and kinetic equations are described in the works \cite{Amirov1}, \cite{[Sh]}. 
\smallskip

An inverse problem in this setting is whether one can reconstruct the Riemannian metric $\g$ in $\M$ by the knowledge of $\Lambda_\g$. There is a well known non-uniqueness for this inverse problem. For it we let $\psi:\bar{\M}\to \bar{\M}$ be a diffeomorphism with $\psi=\textrm{id}$ on $\Gamma$, then $\Lambda_{\psi^*\g}=\Lambda_\g$, where $\psi^*\g$ denote the pull back of the metric $\g$ by the diffeomorphism $\psi$. 
$$
\psi^*\g={}^t\psi'\cdot (\g\circ\psi)\cdot \psi'
$$
here $\psi'$ denotes the (matrix) differential of the smooth function $\psi$, ${}^t\psi'$ its transpose, and the $\cdot$ represents multiplication of matrices. An interesting problem is to see if this is the only obstruction for uniqueness, i.e., if $\g_1$ and $\g_2$ be two Riemannian metrics on $\M$ such that $\Lambda_{\g_1}=\Lambda_{\g_2}$, then there exists a diffeomorphism $\psi:\bar{\M}\to \bar{\M}$ with $\psi=\textrm{id}$ on $\Gamma$ such that $\Lambda_{\psi^*\g}=\Lambda_\g$.  A Riemannian manifold $(\M,\g)$ is said to be uniquely determined up to isometry,  if this is the only obstruction to unique identifiability of the metric.  The Dirichlet to Neumann map can be related to the geodesic distance $d_\g$ which measures the travel times of geodesics joining points of the
boundary. In the case that both $\g_1$ and $\g_2$ are conformal to a \textit{simple} metric $\g$ (i.e., $\g_k=c_k\g$), then this problem is known in seismology as the inverse kinematic problem. In this case, it has been proven, by Bellassoued and Dos Santos Ferriera \cite{BF}, under further restrictions on the metrics that if $\Lambda_{c\g} = \Lambda_{\g}$, then $c=1$. In this case the diffeomorphism $\psi$ must be the identity.
The questions naturally arise:
\begin{itemize}
\item[(a)] are there other types of the non-uniqueness of solution of this inverse problem?
\item[(b)] when is a metric tensor determined by its Dirichlet to Neumann map up to isometry, identical on?
\item[(c)]  for what classes of metrics the Dirichlet to Neumann map determines a metric uniquely ($\varphi$ must be the identity)?
\end{itemize}
Inverse problems with partial boundary data are encountered in mathematical physics and in various applications. For example in medical imaging and in the geophysical imaging of the Earth, measurements can usually be done only for a part of the boundary. In this paper, we will analyze the problem of finding in the wave equation (\ref{1.3}) a Riemannian metric $\g$ in the case when some of the Dirichlet- to Neumann map is known only in arbitrary relatively open part $\Gamma^\natural\subset\Gamma$. So that we consider non-empty relatively open subset $\Gamma^\natural$ of $\Gamma$.  Then we set $\Sigma^\natural=(0,T)\times \Gamma^\natural$ and we introduce the \textit{partial} Dirichlet to Neumann map
\begin{equation}\label{1.9}
\begin{array}{ccc}
\Lambda_{\g}^\natural: H_0^{1,1}(\Sigma)&\longrightarrow& L^{2}(\Sigma^\natural)\\
f &\longmapsto &\Lambda_{\g}^\natural(f):=\p_{\nu}u_{|\Sigma^\natural},
\end{array}
\end{equation}
we observe that since $\Lambda_\g$ is bounded, $\Lambda_\g^\natural$ is also bounded from $H_0^{1,1}(\Sigma)$ into $L^{2}(\Sigma^\natural)$.
\smallskip

The main achievement of this paper is that the Neumann data used in this stability estimate can be measured on the sub-part $\Gamma^\natural$ of the whole boundary, where we recall that $\Gamma^\natural$ an arbitrary non-empty relative open subset of $\Gamma$. Here questions connected with the uniqueness and stability estimates of a solution to the problem in question will be discussed.

\subsection{Previous literature}
The recovery of coefficients found in hyperbolic equations is a topic that has drawn significant interest. Several authors have treated the determination of coefficients from the Dirichlet to Neumann map analogous to $\Lambda_\g$ above.  Unique determination of the metric goes back to Belishev and Kurylev \cite{BK} using the boundary control method and involves works of Katchlov, Kurylev and Lassas \cite{KKL}, Kurylev and Lassas \cite{KL}, Lassas and Oksanen  \cite{LO} and Anderson, Katchalov, Kurylev, Lassas and Taylor \cite{AKKLT}. In fact, Katchalov, Kurylev, Lassas and Mandache proved that the determination of the metric from the Dirichlet-to-Neumann map was equivalent for the wave and Schr\"odinger equations (as well as other related inverse problems) in \cite{KKLM}. The importance of control theory for inverse problems was first understood by Belishev \cite{Beli}. He used control theory to develop the first variant of the control (BC) method. This method gives an efficient way to reconstruct a Riemannian manifold via its response operator (dynamical Dirichlet-to-Neumann map) or spectral data (a spectrum of the Beltrami-Laplace operator and traces of normal derivatives of the eigenfunctions), themselves, whereas the coefficients on these manifolds are recovered automatically.
\smallskip

The problem of establishing stability estimates in determining the metric was studied by Stefanov and Uhlmann in \cite{StU2,StU3} for metrics close to Euclidean and generic simple metrics. H\"older type of conditional stability estimate was proven in \cite{StU3} for metrics close enough to the Euclidean one in three dimensions. In \cite{StU2}, the authors prove H\"older type of stability near generic simple metrics. A Riemannian manifold $(\M,\g)$ is simple, if $\M$ is simply connected, $\p\M$ is strictly convex and any two point in $\M$ can be connected by a single minimizing geodesic depending smoothly on them, see Definition \ref{definition 2.1.} in next section. In \cite{Montalto} Montalto consider the stability of the inverse problem of determining a simple Riemannian metric together with the lower order coefficients of the second order hyperbolic initial-boundary value problem, from the information that is encoded in the global hyperbolic Dirichlet-to-Neumann map and prove H\"older type stability estimates near generic simple Riemannian metrics for the inverse problem of recovering simultaneously the metric and the electro-magnetic potentials. In \cite{BF}, the author and Dos Santos Ferriera proved stability estimates for the wave equation in determining a conformal factor close to $1$ and time independent potentials in simple geometries. We refer to this paper for a longer bibliography in the case of the wave equation. In \cite{LiuOk} Liu and Oksanen consider the problem to reconstruct a wave speed $c$ from acoustic boundary measurements modelled by the hyperbolic Dirichlet to Neumann map. They introduced a reconstruction formula for $c$ that is based on the Boundary Control method and incorporates features also from the complex geometric optics solutions approach. 
\smallskip

The cited results in above on this problem require that the corresponding Dirichlet to Neumann map be at least measured on a sufficiently large part  of the boundary of the spatial domain under consideration, if not on the whole boundary itself. However the stability by a local Dirichlet-to-Neumann map is not discussed comprehensively. In \cite{LO}, Lassas and Oksanen consider the inverse problem to determine a smooth compact Riemannian manifold with boundary from a restriction of the Dirichlet-to-Neumann operator for the wave equation on the manifold and show that the restriction of the Dirichlet to Neumann map determines the manifold $(\M,\g)$ uniquely, assuming that the wave equation is exactly controllable. The restriction here corresponds to the case where the Dirichlet data is supported on $\R\times\Gamma_1$ and the Neumann data is measured on $\R\times\Gamma_2$ for two parts $\Gamma_1,\Gamma_2$ of the boundary. 
\smallskip

An interesting related inverse problem is to determine the Riemannian metric $\g$ by the knowledge of $d_\g(x, y)$ the length of the geodesic joining $x$ and $y$ for all $(x, y)\in\Gamma\times\Gamma$. The obstruction to uniqueness for this inverse problem is similar to our inverse problem here, i.e. if $d_{\g_1}= d_{g_2}$,  then there exists a diffeomorphism $\psi:\Omega\to\Omega$  with $\psi=\textrm{Id}$ on $\Gamma$  such that $\psi^*\g_1=\g_2$. This problem is sometimes called the hodograph problem. This problem arose in geophysics in an attempt to determine the inner structure of the Earth by measuring the travel times of seismic waves. It goes back to Herglotz \cite{Heg} and Wiechert and Zoeppritz \cite{WZ}. Although the emphasis has been in the case that the medium is isotropic, the anisotropic case has been of interest in geophysics since it has been found that the inner core of the Earth exhibits anisotropic behavior \cite{CR}. In differential geometry this inverse problem has been studied because of rigidity questions and is known as the boundary rigidity problem. Stefanov and Uhlmann \cite{StU2009} solved this problem when Riemannian metrics are close to the Euclidean metric. They also gave a stability estimate for this inverse problem in \cite{StU2004}. 
\smallskip

For the Dirichlet to Neumann  map for an elliptic equation, the paper
by Calder\'on \cite{[Calderon]} is a pioneering work. We also refer to
Bukhgeim and Uhlamnn \cite{[Bukhgeim-Uhlmann]}, Hech-Wang
\cite{[Hech-Wang]}, Salo \cite{[Salo]} and Uhlmann \cite{[Uhlmann]}
as a survey. In \cite{[DKSU]} Dos Santos
Ferreira, Kenig, Sjostrand, Uhlmann prove that the knowledge of the
Cauchy data for the Schr\"odinger equation in the presence of
magnetic potential, measured on possibly very small subset of the
boundary, determines uniquely the magnetic field. In \cite{[Tzou]},
Tzou proves a log log-type estimate which show that the
magnetic field and the electric potential of the magnetic
Schr\"odinger equation depends stably on the Dirichlet to Neumann
map even when the boundary measurement is taken only on a subset
that is slightly larger than the half of the boundary. 
\smallskip

As for the stability of the wave equation in the Euclidian case, we also refer to \cite{[Sun]} and \cite{[IS]}; in those papers, the Dirichlet-to-Neumann map was considered on the  whole boundary. Isakov and Sun \cite{[IS]} proved that the difference in some subdomain of two coefficients is estimated by an operator norm of the difference of the corresponding local Dirichlet-to-Neumann maps, and that
the estimate is of H\"older type. Bellassoued, Jellali and Yamamoto \cite{BJY} considered the inverse problem of recovering a
time independent potential in the hyperbolic equation from the partial Dirichlet-to-Neumann map. They proved a logarithm stability estimate. Moreover in \cite{[Rakesh1]}  it is proved that if an unknown coefficient belongs to a given finite dimensional vector space, then the uniqueness
follows by a finite number of measurements on the whole boundary. In \cite{Bell-Bejoud}, Bellassoued and Benjoud used complex geometrical
optics solutions concentring near lines in any direction to prove that the Dirichlet-to-Neumann map determines uniquely the magnetic field induced by
a magnetic potential in a magnetic wave equation.

\subsection{Admissible manifolds}
We start by defining the Riemannian metrics in which we will give the stability results of this paper.
Let $(\M,\g)$ be a Riemannian manifold with boundary $\Gamma$. Fix an open non-empty subset $\mathcal{O}\subset\bar{\M}$ such that $\mathcal{O}$ is an arbitrary neighborhood of $\Gamma$ in $\bar{\M}$. We assume that $\bar{\M}$ is strictly convex in the usual sense and $\g$ satisfying 
\begin{equation}\label{1.10}
\g=e \quad \textrm{in}\,\mathcal{O},\quad \norm{\g-e}_{\mathcal{C}^k(\M)}\leq \varepsilon
\end{equation}
with some $k\geq 2$ and $\varepsilon>0$ sufficiently small and $e$ denotes the standard Euclidean metric. In other words, the domain has the homogeneous crust and the inhomogeneous core. Note that if $\g$ satisfies (\ref{1.10}) then $\bar{\M}$ is clearly geodesically convex with respect to $\g$, i.e. for any two distinct points $x,y\in\bar{\M}$ there is a unique geodesic connecting $x$ and $y$ which lies entirely in $\M$. We denote by
\begin{equation}\label{1.11}
S\M=\set{(x,\xi)\in T\M;\,\abs{\xi}_{\g}=1}, 
\end{equation}
 the sphere bundle  of $\M$.  Let $(x,\xi)\in S\M$, there exist a unique geodesic $\gamma_{x,\xi}$ associated to $(x,\xi)$ which is maxmimally defined on a finite interval $[\tau_-(x,\xi),\tau_+(x,\xi)]$, with $\gamma_{x,\xi}(\tau_\pm(x,\xi))\in\Gamma$. We define the geodesic flow $\Phi_t$ as following
\begin{equation}\label{1.12}
\Phi_t:S\M\to S\M,\quad \Phi_t(x,\xi)=(\gamma_{x,\xi}(t),\dot{\gamma}_{x,\xi}(t)),\quad t\in [\tau_-(x,\xi),\tau_+(x,\xi)],
\end{equation}
and $\Phi_t$ is a flow, that is, $\Phi_t\circ\Phi_s=\Phi_{t+s}$. We introduce the submanifolds of inner and outer vectors of $S\M$ as
\begin{equation}\label{1.13}
\p_{\pm}S\M =\set{(x,\xi)\in S\M,\, x \in \Gamma,\, \pm\seq{\xi,\nu(x)}> 0}.
\end{equation}
For $(x,\xi)\in\p_+ S\M$,  we denote by $\gamma_{x,\xi} : [0,\tau_+(x,\xi)] \to \M$ the maximal
geodesic satisfying the initial conditions $\gamma_{x,\xi}(0) = x$ and $\dot{\gamma}_{x,\xi}(0) = \xi$, where $\tau_+(x,\xi)$ be the  length of the geodesic. 
\smallskip

Now we can define the length and the  scattering relation
\begin{equation}\label{1.14}
\begin{array}{ccc}
\tau^+_\g : \p_-S\M &\longrightarrow  & \R_+  \\
(x,\xi) &\longmapsto & \tau_\g^+(x,\xi),
\end{array}
\,\,;\qquad 
\begin{array}{ccc}
\mathcal{S}_\g : \p_-S\M &\longrightarrow  & \p_+S\M \\
(x,\xi) &\longmapsto & \Phi_{\tau_\g^+}(x,\xi),
\end{array}
\end{equation}  
where $\tau_\g^+(x,\xi)> 0$ is the first moment, at which the unit speed geodesic through $(x,\xi)$ hits $\Gamma$ again. In other words, the map $\mathcal{S}_\g$ is obtained by measuring on the boundary the initial and final points and velocities of all geodesics passing through $\M$ (of course, $\g$ is assumed to be known on $\Gamma$). 
\smallskip

For $\g\in\mathcal{C}^k(\M)$, $k\geq 2$, be a simple metric in $\M$, using the global semi-geodesic coordinates in $\M$ already used in \cite{StU2}, \cite{StU3}, we can prove that there exists a $\mathcal{C}^{k-1}$-diffeomorphism $\psi:\M\to\psi(\M)$, such that  the pullback  $\psi^*\g$ of the metric $\g$ has the property
\begin{equation}\label{1.15}
(\psi^*\g)_{in}=\delta_{in},\quad i=1,\ldots,n.
\end{equation}
Let $\psi_1$ and $\psi_2$ be maps $y \to x$ related to the two simple metrics $\g_1$ and $\g_2$. The new metrics $\psi_1^*\g_1 := \tilde{\g}_1$ and $\psi_2^*\g_2 := \tilde{\g}_2$ have the form (\ref{1.15}). If we assume that $\Lambda_{\g_1}=\Lambda_{\g_2}$ we obtain that $\psi_1^{-1}(\p\M)=\psi_2^{-1}(\p\M)$. In other words, let $\tilde{\M}_1$ and $\tilde{\M}_2$ be the transformed domains of under the maps $\psi_1^{-1}$ and $\psi_2^{-1}$, then $\tilde{\M}_1=\tilde{\M}_2$  whenever $\Lambda_{\g_1}= \Lambda_{\g_2}$. Therefore, a diffeomorphism $\psi$  fixing the boundary is given by $\psi=\psi_1\circ\psi_2^{-1}$. Now if we aim to transform metrics into the form (\ref{1.15}) in proving the stability estimate, then $y_n\to x_n$ will be geodesics and if the domain is preserved, then the lengths of these geodesics must be the same, which equivalent to the Dirichlet to Neumann maps $\Lambda_{\g_1}$ and  $\Lambda_{\g_2}$ must be the same. However, this is not true for any two metrics. Therefore, if $\Lambda_{\g_1}\neq\Lambda_{\g_2}$, then both transforming metrics into the form (\ref{1.15}) and having the same transformed domain are impossible without additional information on metrics.
\smallskip

In order to formulate our result, we need to introduce some notations. Henceforth, let us set
\begin{equation}\label{1.16}
\Gamma_{\!-}=\set{x\in\Gamma,\,\,\seq{\nu(x),e_n}<0},
\end{equation}
where $e_n$ will denote always the standard unit vector of $\R^n$, with $e_n=(0,\dots,0,1)$. We introduce the admissible sets of metrics $\g$,
 \begin{definition}[Admissible metrics]
 Given $k\geq 2$ and $m_0>0$, and $\varepsilon>0$, we say that a couple of metrics tensors $(\g_1,\g_2)$ is $\mathcal{C}^k$-admissible if: For $\ell=1,2$, $\g_\ell\in \mathcal{C}^k(\M)$, $\norm{\g_{\,\ell}}_{\mathcal{C}^k(\M)}\leq M_0$, $\g_\ell$ satisfies (\ref{1.10}), and 
 \begin{equation}\label{admis}
\tau_{\g_1}^+(x,e_n)=\tau_{\g_2}^+(x,e_n),\quad \mathcal{S}_{\g_1}(x,e_n)=\mathcal{S}_{\g_2}(x,e_n),\quad \textrm{for all}\,\,x\in \Gamma_-.
 \end{equation}
 \end{definition}
In other words, the geodesics starting on $(x,e_n)\in\p_{\!-}S\M$, $x\in\Gamma_-$  with directions $e_n$ are a prior determined. 
\begin{itemize}
\item[(i-)]  Examples of manifolds satisfy (\ref{admis}) include a submanifold with boundary of $(\M_0,\mathrm{h}_\ell)\times(\R,dx_n\otimes dx_n)$, $\ell=1,2$ where $(\M_0,\mathrm{h}_\ell)$ are a compact  $(n-1)$-dimensional manifold.
\item[(ii-)] Any bounded strictly convex domain $\M$ in $\R^n$, endowed with a metrics $\g_\ell$, $\ell=1,2$, which in some coordinates has the form
$$
\g_\ell (x',x_n)=\begin{pmatrix}
\widetilde{\g}_\ell (x',x_n) & 0\\
0 &  1
\end{pmatrix},\quad\ell=1,2,
$$
are satisfy (\ref{admis}).
\end{itemize}
Let $\Gamma^\natural\subset \Gamma_{\!-}$ be an arbitrary non-empty relatively open subset of $\Gamma_{\!-}$. We set $\Sigma^\natural=(0,T)\times\Gamma^\natural$ and we consider the partial Dirichlet to Neumann map defined by (\ref{1.9}). We establish a stability result for the inverse problem consisting in the determination of the metric $\g$ from the partial Dirichlet to Neumann map $\Lambda_{\g}^\natural$.
\subsection{Main result}
In this subsection,  we will give the statement of our main result. In what follows, we will use $C=C(T,\M,M_0,k)$ to denote generic positive constants which may vary from line to line (unless otherwise stated). For $\alpha,\beta\in (0,1)$, we define the continuous function on $[0,\infty[$, 
\begin{equation}\label{1.17}
\Phi_{\alpha,\beta}(s)=\log\para{2+s^{-\alpha}}^{-\beta},\quad \Phi_{\alpha,\beta}(0)=0.
\end{equation}
The main result of this paper is the following theorem.
\begin{theorem}\label{Th1}
There exist $k \geq 2$, $\varepsilon > 0$, $\alpha,\beta\in(0,1)$, $T$ sufficiently large and $C > 0$ such that for any $\mathcal{C}^k$-admissible couple of metrics $(\g_1,\g_2)$,  there exists a diffeomorphism $\psi:\bar{\M}\to\bar{\M}$ with $\psi=\textrm{id}$ on $\Gamma$ such that
\begin{equation}\label{1.18}
\norm{\g_1-\psi^*\g_2}_{L^2(\M)}\leq C\Phi_{\alpha,\beta}\para{\norm{\Lambda^{\natural}_{\g_1}-\Lambda^{\natural}_{\g_2}}},
\end{equation}
where $C$ depends on $\M$, $T$, $M_0$, $k$  and $\varepsilon$.
\end{theorem}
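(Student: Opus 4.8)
The plan is to combine a geometric-optics parametrix for the wave operator, a stability estimate for the geodesic ray transform on simple metrics, and a Carleman estimate that accounts for the restriction to $\Sigma^\natural$ and is responsible for the logarithmic loss; throughout I take $T$ larger than twice the diameter of $(\M,\g)$, so that every geodesic relevant below is completely traversed within the time window. To fix the gauge, I first use the global semi-geodesic coordinates recalled around \eqref{1.15} to construct, for $\ell=1,2$, a $\mathcal{C}^{k-1}$-diffeomorphism $\psi_\ell$ bringing $\g_\ell$ to the boundary-normal form $(\psi_\ell^*\g_\ell)_{in}=\delta_{in}$. The admissibility condition \eqref{admis}, which prescribes $\tau_{\g_\ell}^+(x,e_n)$ and $\mathcal{S}_{\g_\ell}(x,e_n)$ for all $x\in\Gamma_-$, forces the two transformed domains to coincide, so that $\psi:=\psi_1\circ\psi_2^{-1}$ is a well-defined diffeomorphism equal to the identity on $\Gamma$. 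After this reduction both metrics acquire the block form of example (ii) above, and it suffices to estimate the difference of the reduced $(n-1)\times(n-1)$ blocks, i.e. $\norm{\g_1-\psi^*\g_2}_{L^2(\M)}$.

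\emph{Linearization via geometric optics.} Writing $u_1,u_2$ for the solutions of \eqref{1.3} with metrics $\g_1,\g_2$ and suitable Dirichlet data, I derive the integral identity expressing $\seq{\para{\Lambda_{\g_1}^\natural-\Lambda_{\g_2}^\natural}f_1,f_2}_{\Sigma^\natural}$ as the interior integral over $Q$ of $\para{\g_1^{jk}-\g_2^{jk}}\p_ju_1\,\p_k\bar u_2$ plus lower-order terms, together with a boundary contribution supported on $\Sigma\setminus\Sigma^\natural$. I then insert geometric-optics solutions $u_\ell=a_\ell(t,x)\,e^{i\sigma\varphi(t,x)}+r_{\ell,\sigma}$, with $\varphi$ solving the eikonal equation for $\p_t^2-\Delta_{\g_\ell}$ (a common phase is admissible to leading order thanks to the smallness \eqref{1.10}), $a_\ell$ solving the associated transport equation along the geodesics, and $\norm{r_{\ell,\sigma}}=O(\sigma^{-1})$ uniformly. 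Since $\p_ju_\ell\sim i\sigma(\p_j\varphi)a_\ell e^{i\sigma\varphi}$, letting $\sigma\to\infty$ and cancelling phases isolates, at leading order, the geodesic ray transform of the symmetric tensor $\g_1-\g_2$ contracted with $d\varphi\otimes d\varphi$, evaluated along the geodesics reaching $\Gamma^\natural$.

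\emph{Partial data and optimization.} I expect the main obstacle to lie in controlling the unobserved boundary term on $\Sigma\setminus\Sigma^\natural$, since only $\Lambda_\g^\natural$ is at our disposal. For this I would establish a Carleman estimate for $\p_t^2-\Delta_\g$ with a weight linear in the $e_n$-direction and adapted to $\Gamma_-$, so that the observed Neumann trace dominates the unobserved one; its price is an exponential factor $e^{C\sigma}$ in front of $\norm{\Lambda_{\g_1}^\natural-\Lambda_{\g_2}^\natural}$. Inverting the ray transform through the Stefanov--Uhlmann stability estimate for simple metrics (the diffeomorphism gauge having already been fixed) then yields, schematically,
\[
\norm{\g_1-\psi^*\g_2}_{H^{-1}(\M)}\leq C\para{e^{C\sigma}\norm{\Lambda_{\g_1}^\natural-\Lambda_{\g_2}^\natural}+\sigma^{-1}}.
\]
Interpolating this weak bound against the a priori estimate $\norm{\g_\ell}_{\mathcal{C}^k(\M)}\leq M_0$ upgrades the left-hand side to the $L^2$-norm and produces the exponent $\alpha$, while optimizing the right-hand side over $\sigma>0$ --- balancing the exponentially growing against the polynomially decaying term --- turns the bound into precisely the modulus $\Phi_{\alpha,\beta}$ of \eqref{1.17}, giving \eqref{1.18}. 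The two delicate points are the uniform construction of the parametrix under \eqref{1.10} and the correct choice of Carleman weight near $\Gamma\setminus\Gamma^\natural$, the latter being exactly what degrades Hölder into logarithmic stability.
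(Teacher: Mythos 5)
Your overall architecture (gauge reduction via semi-geodesic coordinates using \eqref{admis}, geometric optics, ray-transform stability, exponential-versus-power optimization yielding $\Phi_{\alpha,\beta}$) matches the paper, but three steps as you describe them would fail. First, the common-phase ansatz is not admissible: if $u_2=a_2e^{i\sigma\varphi}+r_{2,\sigma}$ with $\varphi$ the eikonal phase for $\g_1$, then $(\p_t^2-\Delta_{\g_2})u_2$ contains the term $\sigma^2\para{\sum_{j,k}\mathrm{s}_{jk}\p_j\varphi\p_k\varphi}a_2e^{i\sigma\varphi}$, so by Duhamel the remainder is of size $\sigma\norm{\mathfrak{s}}$, which blows up as $\sigma\to\infty$; the claim $\norm{r_{\ell,\sigma}}=O(\sigma^{-1})$ uniformly is false, and ``letting $\sigma\to\infty$'' is not available. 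The paper's fix is structural: $u_2$ carries \emph{two} phases, $u_2=ha_2e^{i(\ek_1-t)/h}+a_3e^{i(\ek_2-t)/h}+r_{2,h}$, where the corrector amplitude $a_3$ solves the inhomogeneous transport equation \eqref{3.34} whose source $m(x,h)$ absorbs precisely the eikonal mismatch; the resulting error terms $h^{-2}\norm{\mathfrak{s}}_{\mathcal{C}^2}$, $h^{-3}\norm{\mathfrak{s}}_{\mathcal{C}^2}$ survive into Lemma \ref{L4.1} and are only neutralized later by the choice $h=\norm{\mathfrak{t}}_{\mathcal{C}^2}^{1/4}$, interpolation, and absorption of $\varepsilon^{\mu_1}\norm{\mathfrak{t}}_{L^2}$ into the left-hand side using the smallness \eqref{1.10}. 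Note also that this requires \emph{two} independent parameters --- $h$ optimized against $\norm{\mathfrak{t}}_{\mathcal{C}^2}$, and separately $\gamma$ optimized against $\norm{\Lambda^\natural_{\g_1}-\Lambda^\natural_{\g_2}}$ to produce the logarithm --- whereas you conflate them into a single $\sigma$.

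Second, your partial-data mechanism is the wrong tool: a Carleman estimate for $\p_t^2-\Delta_\g$ with a weight linear in the $e_n$-direction forces a geometric (pseudoconvexity/observability-type) condition relating $\Gamma^\natural$ to the weight, and hence would require observation on essentially all of $\Gamma_-$; here $\Gamma^\natural$ is an \emph{arbitrary} nonempty relatively open subset of $\Gamma_-$, possibly tiny. The paper avoids any geometric condition by cutting off with $\varkappa$ near the boundary (so the commutator $[\Delta_{\g_1},\varkappa]w$ lives in a collar where $\g_1=\g_2$), applying the FBI transform in time, and using an \emph{elliptic} Carleman estimate near $\Gamma$ (Lemma \ref{L3.4}), which yields the quantitative unique continuation bound \eqref{3.64} with the $\gamma^{-1/2}$ and $e^{\mu\gamma}$ factors. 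Third, and most seriously, you treat the Stefanov--Uhlmann stability estimate as if fixing the diffeomorphism gauge made the ray transform injective on the tensor difference: it does not. The estimate \eqref{2.14} controls only the solenoidal projection $\mathfrak{t}^{\textrm{sol}}$, and potential tensors $\nabla_{\textrm{sym}}\mathrm{v}$ remain invisible to $\I_{\g_1}$ even after the reduction to normal form. The missing step is the paper's Lemma \ref{L4.4}: because the semi-geodesic gauge forces $\mathrm{t}_{jn}=0$ for all $j$, the components of $\mathrm{v}$ satisfy a triangular ODE system in $x_n$ driven by $\mathfrak{t}^{\textrm{sol}}$, and Gr\"onwall gives $\norm{\mathfrak{t}}_{L^2(\M)}\leq C\norm{\mathfrak{t}^{\textrm{sol}}}_{H^2(\M)}$ as in \eqref{4.22}. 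Without this argument your scheme bounds only $\mathfrak{t}^{\textrm{sol}}$, not $\g_1-\psi^*\g_2$, and the theorem does not follow.
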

By Theorem \ref{Th1}, we can readily derive the following uniqueness result
\begin{corollary}
There exist $k \geq 2$, $\varepsilon > 0$, $\mu\in(0,1)$, $T$ sufficiently large such that for any $\mathcal{C}^k$-admissible couple of metrics $(\g_1,\g_2)$ satisfying $\Lambda^{\natural}_{\g_1}=\Lambda^{\natural}_{\g_2}$, there exists a diffeomorphism $\psi:\bar{\M}\to\bar{\M}$ with $\psi=\textrm{id}$ on $\Gamma$ verifying $\g_1=\psi^*\g_2$.
\end{corollary}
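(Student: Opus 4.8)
The strategy is to obtain this uniqueness corollary as an immediate limiting consequence of the quantitative stability estimate (\ref{1.18}) in Theorem \ref{Th1}, rather than re-running the whole machinery. The key observation is that the hypothesis $\Lambda^{\natural}_{\g_1}=\Lambda^{\natural}_{\g_2}$ forces the operator norm on the right-hand side of (\ref{1.18}) to vanish, so the estimate degenerates and pins down $\g_1$ up to the gauge diffeomorphism.

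\textbf{First step: fixing the constants.} I would first fix $k\geq 2$, $\varepsilon>0$, $\alpha,\beta\in(0,1)$, and $T$ sufficiently large so that Theorem \ref{Th1} applies to every $\mathcal{C}^k$-admissible couple $(\g_1,\g_2)$. These are exactly the parameters produced by the theorem; the corollary's threshold $\mu\in(0,1)$ can be taken to be, say, $\mu=\beta$ (its precise value is irrelevant here since no quantitative rate survives in the uniqueness statement — one keeps a $\mu$ only to match the declared format). With these constants fixed, the diffeomorphism $\psi:\bar{\M}\to\bar{\M}$ with $\psi=\textrm{id}$ on $\Gamma$ furnished by Theorem \ref{Th1} is precisely the candidate gauge we need.

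\textbf{Second step: evaluating at zero.} Under the corollary's hypothesis $\Lambda^{\natural}_{\g_1}=\Lambda^{\natural}_{\g_2}$, the difference of the partial Dirichlet-to-Neumann maps is the zero operator, so
\begin{equation*}
\norm{\Lambda^{\natural}_{\g_1}-\Lambda^{\natural}_{\g_2}}=0.
\end{equation*}
Since $\Phi_{\alpha,\beta}(0)=0$ by the definition (\ref{1.17}), the right-hand side of (\ref{1.18}) vanishes, and Theorem \ref{Th1} yields
\begin{equation*}
\norm{\g_1-\psi^*\g_2}_{L^2(\M)}\leq C\,\Phi_{\alpha,\beta}(0)=0.
\end{equation*}
As an $L^2(\M)$-norm, the left-hand side is nonnegative, hence $\norm{\g_1-\psi^*\g_2}_{L^2(\M)}=0$, which gives $\g_1=\psi^*\g_2$ as elements of $L^2(\M)$. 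Because both $\g_1$ and $\psi^*\g_2$ are continuous (indeed $\mathcal{C}^k$ and $\mathcal{C}^{k-1}$ respectively, as $\psi$ is at worst a $\mathcal{C}^{k-1}$-diffeomorphism), the almost-everywhere equality upgrades to pointwise equality $\g_1=\psi^*\g_2$ on all of $\bar{\M}$, with $\psi=\textrm{id}$ on $\Gamma$ as required.

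\textbf{Main obstacle.} There is no genuine analytic obstacle: the corollary is a formal deduction and the only point requiring care is the continuity of $\Phi_{\alpha,\beta}$ at $0$ together with the convention $\Phi_{\alpha,\beta}(0)=0$, which is built into (\ref{1.17}) precisely so that the stability modulus degenerates correctly. The substantive content — the existence of the boundary-fixing diffeomorphism and the control of $\norm{\g_1-\psi^*\g_2}_{L^2}$ by the DN-map discrepancy — is entirely absorbed into Theorem \ref{Th1}; the corollary merely records its specialization to vanishing boundary data.
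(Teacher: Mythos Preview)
Your proposal is correct and is exactly the deduction the paper intends: the corollary is stated immediately after Theorem \ref{Th1} with the remark that it ``can readily [be] derive[d]'' from the stability estimate, and no separate proof is given. Your argument --- evaluating (\ref{1.18}) at zero operator norm, using $\Phi_{\alpha,\beta}(0)=0$, and upgrading $L^2$-equality to pointwise equality by continuity --- is precisely the implicit reasoning behind this.
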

This paper is organized as follows. Section 2 is devoted to the study of the geodesic ray transform of 2-symmetric tensor and give a change of coordinates in which the metrics takes a special form. In section 3  we build geometric optics solutions designed in accordance with our problem. Section 4 is devoted to the proof of the Theorem \ref{Th1}.

\section{Geodesic ray transform of 2-symmetric tensor and change of coordinates}
\setcounter{equation}{0}
In this section we first collect some  formulas needed in the rest of this paper and introduce the geodesical ray transform for $2$-symmetric tensors. 
\subsection{Preliminaries}
For this paper, we use many of the notational conventions in \cite{BF}. Let $(\M,\g)$ be a (smooth) compact Riemannian
manifold with boundary of dimension $n \geq 2$.
We refer to \cite{[Jost]} for the differential calculus of tensor fields on Riemannian manifolds. If we fix local coordinates $x=(x^1,\ldots,x^n)$ and let
$(\p_1,\dots,\p_n)$ denote the corresponding tangent vector fields, the inner product and the norm on the tangent
space $T_x\M$ are given by
$$
\g(X,Y)=\seq{X,Y}_\g=\sum_{j,k=1}^n\g_{jk}X^jY^k,\quad \abs{X}_\g=\seq{X,X}_\g^{1/2},\quad  X=\sum_{i=1}^nX^i\p_i,\,\, Y=\sum_{i=1}^n
Y^i\p_i.
$$
If $f$ is a $\mathcal{C}^1$ function on $\M$, we define the gradient of $f$ as the vector field $\nabla f$ such that, in local coordinates, we have
\begin{equation}\label{2.1}
\nabla_\g f=\sum_{i,j=1}^n\g^{ij}\frac{\p f}{\p x^i}\p_j.
\end{equation}
For  a smooth manifold $\M^*$ let $\psi:\M^*\to\M$ be a smooth map from $\M^*$ into $\M$. The pull back of the metric $\g$ is defined by
$$
(\psi^*\g)(X,Y)=\g(d\psi(X),d\psi(Y)),
$$
for any vector field $X,Y$ on $\M$.\\
The metric tensor $\g$ induces the Riemannian volume $\dv_\g=\sqrt{\abs{\g}}\dd x^1\wedge\cdots \wedge \dd x^n$. We denote by $L^2(\M)$ the completion
of $\mathcal{C}^\infty(\M)$ with respect to the usual inner product
$$
\seq{u,v}=\int_\M u(x)\overline{v(x)} \dv_\g,\qquad  u,v\in\mathcal{C}^\infty(\M).
$$
The Sobolev space $H^1(\M)$ is the completion of $\mathcal{C}^\infty(\M)$ with respect to the norm $\norm{\,\cdot\,}_{H^1(\M)}$,
$$
\norm{u}^2_{H^1(\M)}=\norm{u}^2_{L^2(\M)}+\norm{\nabla u}^2_{L^2(\M)}.
$$
Moreover, using covariant derivatives, it is possible to define coordinate invariant norms in $H^k(\M)$, $k\geq 0$.\\
Denote by $\dive X$ the divergence of a vector field $X\in H^1(\M,T\M)$ on $\M$, i.e. in local coordinates,
\begin{equation}\label{2.2}
\dive X=\frac{1}{\sqrt{\abs{\g}}}\sum_{i=1}^n\p_i\para{\sqrt{\abs{\g}}\,X^i},\quad X=\sum_{i=1}^nX^i\p_i.
\end{equation}
If $X\in H^1(\M,T\M)$ and $f\in H^1(\M)$, the Green's formula reads
\begin{equation}\label{2.3}
\int_\M\dive X\,f\dv=-\int_\M\seq{X,\nabla f} \dv+\int_{\Gamma}\seq{X,\nu} f\ds.
\end{equation}
where $\ds$ is the volume form of $\Gamma$. Then if $f\in H^1(\M)$ and $w\in H^2(\M)$, the following identity holds
\begin{equation}\label{2.4}
\int_\M\Delta w f\dv=-\int_\M\seq{\nabla w,\nabla f} \dv+\int_{\Gamma}\p_\nu w f \ds.
\end{equation}
The Riemannian scalar product on $T_x\M$ induces the volume form on $S_x\M$, denoted by $\dd \omega_x(\theta)$ and given by
\begin{equation}\label{2.5}
\dd \omega_x(\theta)=\sqrt{\abs{\g}} \, \sum_{k=1}^n(-1)^k\theta^k \dd \theta^1\wedge\cdots\wedge \widehat{\dd \theta^k}\wedge\cdots\wedge \dd \theta^n.
\end{equation}
As usual, the notation $\, \widehat{\cdot} \,$ means that the corresponding factor has been dropped.
We introduce the volume form $\dvv$ on the manifold $S\M$ by
$$
\dvv (x,\theta)=\dd\omega_x(\theta)\wedge \dv.
$$
By Liouville's theorem, the form $\dvv$ is preserved by the geodesic flow. 
Now, we recall the submanifolds of inner and outer vectors of $S\M$ given by
\begin{equation}\label{2.6}
\p_{\pm}S\M =\set{(x,\theta)\in S\M,\, x \in \Gamma,\, \pm\seq{\theta,\nu(x)}> 0}.
\end{equation}
Note that $\p_+ S\M$ and $\p_-S\M$ are compact manifolds with the same boundary $S\Gamma$, and $\p S\M = \p_+ S\M\cup \p_- S\M$. We denote by  $\mathcal{C}^\infty(\p_+ S\M)$ be the space of smooth functions on the manifold $\p_+S\M$. The corresponding volume form on the boundary $\p S\M =\set{(x,\theta)\in S\M,\, x\in\Gamma}$ is given by
$$
\dss=\dd\omega_x(\theta) \wedge \ds.
$$
Let $L^2_\mu(\p_+S\M)$ be the space of square integrable functions with respect to the measure $\mu(x,\theta)\dss$ with
$\mu(x,\theta)=\abs{\seq{\theta,\nu(x)}}$. This Hilbert space is endowed with the scalar product
\begin{equation}\label{2.7}
\para{u,v}_\mu=\int_{\p_+S\M}u(x,\theta) \overline{v}(x,\theta) \mu(x,\theta)\dss.
\end{equation}
\subsection{Geodesic ray transform of 2-symmetric tensor}
We denote by $\nabla$ the Levi-Civita connection on $(\M,\g)$. For a point $x \in \Gamma$, we define the second quadratic form of the boundary on the space $T_x\Gamma$ by
$$
\Pi(\xi,\xi)=\seq{\nabla_\xi\nu,\xi},\quad \xi\in T_x\Gamma.
$$
We say that the boundary is strictly convex if the form is positive-definite for all $x \in \Gamma$.
\smallskip

For $x\in \M$ and $\xi\in T_x\M$ we denote by $\gamma_{x,\xi}$ the unique geodesic starting at the point $x$ in the direction $\xi$.  The exponential map $\exp_x:T_x\M\To \M$ is given by
\begin{equation}\label{2.8}
\exp_x(v)=\gamma_{x,\xi}(\abs{v}),\quad \xi=\frac{v\,\,}{\abs{v}}.
\end{equation}
The Riemannian manifold $(\M,\, \g)$ is called a non-trapping manifold, if for each $(x,\theta) \in S\M$, the maximal geodesic $\gamma_{x,\theta}(t)$ satisfying the initial conditions $\gamma_{x,\theta}(0) = x$ and $\dot{\gamma}_{x,\theta}(0) = \theta$ is defined on a finite segment $[\tau_{-}(x,\theta), \tau_{+}(x,\theta)]$.  An important subclass of convex non-trapping manifolds are simple manifolds.
\begin{definition}(Simple manifold)\label{definition 2.1.}
We say that the Riemannian manifold $(\M,\g)$ (or more shortly that the metric $\g$) is simple, if $\Gamma= \p\M$ is
strictly convex with respect to $\g$, and for any $x\in \M$, the exponential map
$\exp_x:\exp_x^{-1}(\M)\To \M$ is a diffeomorphism.
\end{definition}
Any metric $\g$ on $\M$ so that $(\M, \g)$ is simple will be called a simple metric on $\M$. The second condition above hides the requirement that any two points $x, y$ in $\M$ are connected by a unique geodesic in $\M$ that depends smoothly on $x$, $y$. In particular, there are no conjugate points on any geodesic in $\M$. Note that if $(\M,\g)$ is simple, one can extend $(\M,\g)$ into another simple manifold $\M_{1}$ such that $\M \Subset \M_{1}$. 
\smallskip

Let $T^m_x\M$ be the space of tensors fields of type $m$ on $T_x\M$. We denote by $T^m\M$ the tensor bundle of type $m$. In the local coordinate system a $m$-tensor field $\mathfrak{t}$ can be written as
$$
\mathfrak{t}=t_{j_1,\dots,j_m}dx^{j_1}\otimes\dots\otimes dx^{j_m}.
$$
For each $x\in\M$, $T^m_x\M$ is endowed with an inner product as follows
$$
\seq{\mathfrak{t}_1 ,\mathfrak{t}_2}=\sum_{j_1,\dots,j_k=1}^n \mathfrak{t}_1(\p_{j_1},\dots,\p_{j_m})\mathfrak{t}_2(\p_{j_1},\dots,\p_{j_m}).
$$
Let $\mathcal{C}^\infty(\M,T^m\M)$ the space of the smooth $m$-tensor fields on $\M$. We denote by $L^2(\M,T^m\M)$ the space of square integrable $m$-tensors fields on $\M$ as the completion of $\mathcal{C}^\infty(\M,T^m\M)$ endowed with the following inner product
$$
\para{\mathfrak{t}_1,\mathfrak{t}_2}=\int_\M\seq{\mathfrak{t}_1,\overline{\mathfrak{t}_2}}\dv,\quad \mathfrak{t}_1,\mathfrak{t}_2 \in T^m\M.
$$
We denote by $\mathcal{C}_{\textrm{sym}}^\infty(\M,T^2\M)$ (resp. $L^2_{\textrm{sym}}(\M,T^2\M)$) the space of the smooth  (resp. the space of square integrable) symmetric $2$-tensor fields on $\M$. For $\mathfrak{s}\in \mathcal{C}_{\textrm{sym}}^\infty(\M,T^2\M)$, we have 
\begin{equation}\label{2.9}
\mathfrak{s}=\sum_{j,k=1}^n \mathrm{s}_{jk}dx^{j}\otimes dx^{k},\quad \mathrm{s}_{jk}=\mathrm{s}_{kj},\quad j,k=1,\ldots,n.
\end{equation}
It is well known that for a $\g$ smooth enough metric, each symmetric tensor $\mathfrak{s}\in L^2_{\textrm{sym}}(\M,T^2\M)$ admits unique orthogonal decomposition
\begin{equation}\label{2.10}
\mathfrak{s}=\mathfrak{s}^{\textrm{sol}}+\nabla_{\textrm{sym}}\mathrm{v},
\end{equation}
into a solnoidal tensor $\mathfrak{s}^{\textrm{sol}}\in L^2_{\textrm{sym}}(\M,T^2\M)$, i.e., $\delta\mathfrak{s}^{\textrm{sol}}=0$, and a potential tensor $\nabla_{\textrm{sym}}\mathrm{v}$, where $\mathrm{v}\in H_0^1(\M)$, with
\begin{equation}\label{2.11}
\nabla_{\textrm{sym}}\mathrm{v}=\frac{1}{2}\para{\nabla_j\mathrm{v}_k+\nabla_k\mathrm{v}_j },\quad 1\leq j,k,\leq n,
\end{equation}
and $\nabla_j$ is the covariant derivative in metric $\g$.\\
The ray transform of symmetric $2$-tensor on a simple Riemannian manifold $(\M,\g)$ is the linear operator:
$$
\I_\g:\mathcal{C}_{\textrm{sym}}^\infty(\M,T^2\M) \To \mathcal{C}^\infty(\p_+S\M)
$$
defined by
$$
\I_\g (\mathfrak{s})(x,\theta)=\int_{\gamma_{x,\theta}}f=\sum_{j,k=1}^n\int_0^{\tau_+(x,\theta)} \mathrm{s}_{jk}(\gamma_{x,\theta}(t))\dot{\gamma}^j_{x,\theta}(t)\dot{\gamma}^k_{x,\theta}(t)dt,
$$
where $\gamma_{x,\theta}: [0,\tau_+(x,\theta)]\to\M$ is a maximal geodesic satisfying the initial conditions $\gamma_{x,\theta}(0)=x$ and $\dot{\gamma}_{x,\theta}(0)=\theta$.
It is easy to see that $\I_\g(\nabla_{\textrm{sym}}\mathrm{v})=0$ for any smooth vector field $\mathrm{v}$ in $\M$ with $\mathrm{v}=0$ on $\Gamma$.
\begin{definition}($s$-injective) Let $(\M,\g)$ be a simple Riemannian manifold. We say that $\I_\g$ is $s$-injective in $\M$, if $\I_\g \mathfrak{s}=0$ and $\mathfrak{s}\in L^2(\M)$ imply $\mathfrak{s}=\nabla_{\mathrm{sym}}\mathrm{v}$ with some vector field $\mathrm{v}\in H^1(\M)$ with $\mathrm{v}=0$ on $\Gamma$.
\end{definition}
In other words, if $\mathfrak{s}\in H^1_{\textrm{sym}}(\M,T^2\M)$ and $\I_\g(\mathfrak{s})=0$ implies $\mathfrak{s}^{\textrm{sol}}=0$, i.e., $\mathfrak{s}=\nabla_{\textrm{sym}}\mathrm{v}$ with some vector field $\mathrm{v}$ vanishing on $\Gamma$. So we have
\begin{equation}\label{2.12}
\abs{\I_\g(\mathfrak{s})(x,\theta)}=\abs{\I_\g(\mathfrak{s}^{\textrm{sol}})(x,\theta)}\leq C\norm{\mathfrak{s}^{\textrm{sol}}}_{\mathcal{C}^0}, \quad \mathfrak{s}\in \mathcal{C}_{\textrm{sym}}^0(\M,T^2\M).
\end{equation}
The ray transform $\I_\g$ of symmetric $2$-tensor on a simple Riemannian manifold  is a bounded operator from $L^2_{\textrm{sym}}(\M,T^2\M)$ into $L^2_\mu(\p_+S\M)$ and  can be extend to the bounded operator
\begin{equation}\label{2.13}
\I_\g:H_{\textrm{sym}}^k(\M,T^2\M)\To H^k (\p_+S\M).
\end{equation}
Now, we recall some properties of the  ray transform of symmetric $2$-tensors on a simple Riemannian manifold proved in \cite{StU3}. Let $(\M,\g)$ be a simple metric, we assume that $\g$ extends smoothly as a simple metric on $\M_1\Supset \M$ and let $\I_\g^*:L^2_\mu(\p_+S\M)\to L^2_{\textrm{sym}}(\M,T^2\M)$ the adjoint of $\I_\g$. We denote $N_\g=\I_\g^*\I_\g$. It is well known, see Stefanov Uhlamnn \cite{StU3},  that there exists $k_0$ such that for each $k\geq k_0$, the set $\mathcal{G}^k(\M)$ of simple $\mathcal{C}^k(\M)$ metrics in $\M$ for which $\I_\g$ is s-injective is open and dense in the $\mathcal{C}^k(\M)$ topology. Moreover, for any $\g\in \mathcal{G}^k(\M)$,
\begin{equation}\label{2.14}
\norm{\mathfrak{s}^{\textrm{sol}}}_{L^2(\M)}\leq C\norm{N_\g(\mathfrak{s})}_{H^2(\M_1)},
\end{equation}
for any $\mathfrak{s}\in L^2_{\textrm{sym}}(\M,T^2\M)$, with a constant $C>0$ that can be chosen locally uniform in $\mathcal{G}^k$ in the $\mathcal{C}^k(\M)$ topology.
\smallskip

If $\M'$ is an open set of the simple Riemannian manifold $(\M_{1},\g)$, the normal operator $N_\g$ is an elliptic  pseudo-differential operator of order $-1$ on $\M'$. Therefore for each $k\geq 0$ there exists a constant $C_k>0$ such that for all $\mathfrak{s}\in H^k_{\textrm{sym}}(\M,T^2\M)$ compactly supported in $\M'$
\begin{equation}\label{2.15}
\norm{N_\g(\mathfrak{s})}_{H^{k+1}(\M_{1})}\leq C_k\norm{\mathfrak{s}^{\textrm{sol}}}_{H^k(\M')}.
\end{equation}
Since for $\g=e$, $\I_e$ is $s$-injective, then for $\varepsilon>0$ sufficiently small and any metric tensor $\g$ satisfy (\ref{1.10}), we have $(\M,\g)$ is simple and $\I_\g$ is $s$-injective.
\subsection{Change of coordinates}
In this section we will construct a diffeomorphism $\psi$ that fixes the boundary $\Gamma$. Let $(\M,\g)$ be an $n$-dimensional, $n\geq 2$, compact Riemannian manifold with smooth boundary $\Gamma$, with $\M\subset\R^n$ is convex, and bounded domain. We first assume throughout this section that $\g$ satisfies (\ref{1.10}). In view of (\ref{1.10}), we can extend $\g$ outside of $\M$, still denoted by $\g$, by setting $\g=e$ on 
$\R^n\backslash\M$ so that $\g\in\mathcal{C}^k(\R^n)$ and
\begin{equation}\label{2.16}
\norm{\g-e}_{\mathcal{C}^{k}(\R^n)}\leq \varepsilon.
\end{equation}
The Hamiltonian related to $\g$, is
\begin{equation}\label{2.17}
H(x,\xi)=\frac{1}{2}\para{\abs{\xi}^2_{\g}-1}=\frac{1}{2}\big(\sum_{i,j=1}^n\g^{ij}\xi_i\xi_j-1\big).
\end{equation}
Consider the following Hamiltonian system:
\begin{equation}\label{2.18}
\left\{\begin{array}{lll}
\displaystyle \dot{x}_k(s)=\sum_{j=1}^n\g^{kj}\xi_j,\quad \dot{\xi}_k(s)=-\frac{1}{2}\sum_{i,j=1}^n\frac{\p\g^{ij}}{\p x_k}\xi_i\xi_j,\quad k=1,\ldots,n,\cr
x(-\varrho)=(z,-\varrho)\quad \xi(-\varrho)=e_n.
\end{array}
\right.
\end{equation} 
Here $z\in\R^{n-1}$, $\varrho>0$ is such that $\g=e$ for $\abs{x}>\varrho$ and the solution $\para{x(s),\xi(s)}=\para{x_\g(z,s),\xi_\g(z,s)}$ depends on the parameter $z$. We remark that if $\g=e$, then $x_e(s)=(z,s-\varrho)$ and $\xi_e(s)=e_n$. As in \cite{StU3}, it follows from the estimate (\ref{2.16}) that, for any $a>0$ fixed, there exist $C>0$ such that
\begin{equation}\label{2.19}
\norm{x_\g-x_e}_{\mathcal{C}^{k-1}(\R^{n-1}\times [0,a])}+\norm{\xi_\g-e_n}_{\mathcal{C}^{k-1}(\R^{n-1}\times [0,a])}\leq C\varepsilon.
\end{equation}
In particular, (\ref{2.19}) implies that under the smallness assumption (\ref{2.16}) the Hamiltonian flow is non-trapping for small $\varepsilon$, more precisely, $x_\g(z,s)\notin B_\varrho:=\set{x\in\R^n, \abs{x}<\varrho}$, for $s>a$ with some $a>0$. Moreover, the mapping $(z,s)\to x_\g(z,s)$ is a $\mathcal{C}^{k-1}$-diffeomorphism on $\set{z\in\R^{n-1},\,\,\abs{z}\leq 2\varrho}\times [0, a]$ and its range covers $B_\varrho$ provided that $\varepsilon$ is small enough.
\smallskip

Introduce new coordinates $y=(z,s)$. Then the map $\psi:\M^*=\psi^{-1}(\M)\to \M$, $y\to x_\g(z,s)$ is close to $\textrm{id}$ in the $\mathcal{C}^{k-1}$-topology for small $\varepsilon$ and therefore is a diffeomorphism. This in particular implies that for $\varepsilon$ small enough
\begin{equation}\label{2.20}
\norm{\psi-\textrm{id}}_{\mathcal{C}^{k-1}}\leq C\varepsilon.
\end{equation}
In the new coordinates $\g$  satisfies the following lemma
\begin{equation}\label{2.21}
(\psi^*\g)_{in}=\delta_{in},\quad i=1,\ldots,n.
\end{equation}
Now let $(\g_1,\g_2)$ a couple of $\mathcal{C}^k$-admissible metrics on $\M$. Let us denote $\psi_\ell:\M^*_\ell\to \M$, $y\to x_{\g_\ell}(z,s)$ the diffeomorphism related to $\g_\ell$ as above, where $\M^*_\ell=\psi_\ell^{-1}(\M)$, $\ell=1,2$. We state the following 
\begin{lemma}\label{L2.3}
Let $(\g_1,\g_2)$ a couple of $\mathcal{C}^k$-admissible metrics on $\M$. Then we have
\begin{equation}\label{2.22}
\psi_1^{-1}(\Gamma)=\psi_2^{-1}(\Gamma).
\end{equation}
So $\psi_1^{-1}$ and $\psi_2^{-1}$ map $\M$ onto a new domain $\M^*=\M^*_1=\M^*_2$ with smooth boundary $\Gamma^*=\p\Omega^*$ and
\begin{equation}\label{2.23}
\Gamma_-^*:=\set{x\in \Gamma^*,\,\,\seq{\nu(x),e_n}<0}=\set{x\in \Gamma,\,\,\seq{\nu(x),e_n}<0}=\Gamma_-.
\end{equation}
Moreover, we have
\begin{equation}\label{2.24}
\psi_1^*\g_1=\psi_2^*\g_2\quad \textrm{in}\,\,\mathcal{O}^*\subset \M^*,\quad \norm{\psi_\ell^*\g_\ell-e}_{\mathcal{C}^{k-2}(\M^*)}\leq C\varepsilon,\quad \ell=1,2,
\end{equation}
for some positive constant $C$ and a neighborhood $\mathcal{O}^*$ of $\Gamma^*=\p\M^*$.
\end{lemma}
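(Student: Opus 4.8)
The plan is to follow the two families of geodesics $s\mapsto x_{\g_\ell}(z,s)$, $\ell=1,2$, produced by the Hamiltonian system \eqref{2.18} across the three regions they cross---the Euclidean exterior together with the homogeneous crust lying below $\M$, the inhomogeneous core, and the crust and exterior above $\M$---and to show that the admissibility condition \eqref{admis} forces the two families to coincide everywhere outside the core. First I would note that the two systems \eqref{2.18} share the same initial data $x(-\varrho)=(z,-\varrho)$, $\xi(-\varrho)=e_n$, and that $\g_1=\g_2=e$ both outside $\M$ and on the neighbourhood $\mathcal{O}$ of $\Gamma$; hence, by uniqueness for the geodesic ODE, the solutions $(x_{\g_1},\xi_{\g_1})$ and $(x_{\g_2},\xi_{\g_2})$ coincide as long as the geodesic has not reached the core. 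In particular both geodesics enter $\M$ through the same point $p(z)\in\Gamma_-$, at the same parameter $s_-(z)$ and with the same direction $e_n$, so that $(p(z),e_n)\in\p_-S\M$.

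I would then insert this common entry data into the hypothesis \eqref{admis}. Writing $s_+^\ell(z)=s_-(z)+\tau^+_{\g_\ell}(p(z),e_n)$ for the exit parameter, the identity $\tau^+_{\g_1}(p(z),e_n)=\tau^+_{\g_2}(p(z),e_n)$ gives $s_+^1(z)=s_+^2(z)=:s_+(z)$, and the identity $\mathcal{S}_{\g_1}(p(z),e_n)=\mathcal{S}_{\g_2}(p(z),e_n)$ gives that the two geodesics leave $\M$ at a common point and with a common direction. By strict convexity each geodesic stays inside $\M$ exactly for $s\in(s_-(z),s_+(z))$, whence $\M^*_1=\M^*_2=\set{(z,s);\ s_-(z)<s<s_+(z)}$ and $\psi_1^{-1}(\Gamma)=\psi_2^{-1}(\Gamma)=:\Gamma^*$, which is \eqref{2.22}. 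Since the geodesics are Euclidean straight lines on the lower crust, there $\psi_\ell$ reduces to the identity in the coordinates $(z,s)$, so the entry part of $\Gamma^*$ is exactly $\Gamma_-$, together with its normal and the sign of $\seq{\nu,e_n}$; this yields \eqref{2.23}.

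For \eqref{2.24} I would upgrade the coincidence of geodesics to a coincidence of the maps $\psi_\ell$ in a one-sided neighbourhood $\mathcal{O}^*$ of $\Gamma^*$ in $\M^*$. Near the entry part of $\Gamma^*$ both maps are the identity. Near the exit part, I would integrate the geodesic ODE starting from the common exit point and exit direction: on the upper crust and the exterior above $\M$, where $\g_1=\g_2=e$, uniqueness of solutions gives $x_{\g_1}(z,s)=x_{\g_2}(z,s)$, that is $\psi_1=\psi_2$, on $\mathcal{O}^*$. Because $\g_1=\g_2=e$ on the images, this forces $\psi_1^*\g_1=\psi_2^*\g_2$ on $\mathcal{O}^*$. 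The estimate $\norm{\psi_\ell^*\g_\ell-e}_{\mathcal{C}^{k-2}(\M^*)}\leq C\varepsilon$ is then routine: writing the pull-back as ${}^t\psi_\ell'\,(\g_\ell\circ\psi_\ell)\,\psi_\ell'$ and combining \eqref{2.16} with \eqref{2.20}, each factor differs from the identity by $O(\varepsilon)$, the single loss of derivative coming from differentiating $\psi_\ell\in\mathcal{C}^{k-1}$ once in the pull-back.

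The crux of the argument, and the step I expect to be the main obstacle, is the passage through the upper crust: above the core the diffeomorphisms $\psi_1,\psi_2$ are genuinely different from the identity, so neither the domain identity \eqref{2.22} nor the map identity underlying \eqref{2.24} can be read off directly. What saves the argument is that the scattering relation records the exit point and the exit direction \emph{together}, which is exactly the terminal Cauchy data needed to invoke uniqueness for the geodesic ODE in the Euclidean region and thereby propagate the coincidence of the two geodesics back into $\M^*$ and out to the exterior.
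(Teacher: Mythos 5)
Your proposal is correct and follows essentially the same route as the paper: agreement of the two flows on the entry side because $\g_1=\g_2=e$ outside the core, then the admissibility condition \eqref{admis} together with the unit-speed (arclength) parametrization to identify the exit parameters $s_1^+=s_2^+$, exit points and directions, giving \eqref{2.22}--\eqref{2.23}, and finally coincidence of $\psi_1,\psi_2$ near $\Gamma^*$ to get \eqref{2.24} with the $\mathcal{C}^{k-2}$ bound from \eqref{2.16} and \eqref{2.20} via the Jacobian. The only cosmetic difference is that the paper deduces $s_1^+=s_2^+$ by comparing pull-back geodesic distances $d_{\psi_\ell^*\g_\ell}$, whereas you read it off directly from $s_+^\ell=s_-+\tau^+_{\g_\ell}$; these are the same argument.
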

\begin{proof}
We denote
\begin{equation}\label{2.25}
\Gamma_{\pm}^{(\ell)}=\set{y\in\p\M^*_\ell,\,\, \pm\seq{\nu^*_\ell(y),e_n}>0},\quad \ell=1,2,
\end{equation}
where $\nu^*_\ell$ is the outer normal to $\p\M^*_\ell$ at $y$ relative to $\psi_\ell^*\g_\ell$.\\
First, it is clear that $\psi_1^{-1}(x)=\psi_2^{-1}(x)=x$, for any $x\in\Gamma_-=\set{x\in\Gamma,\,\seq{\nu(x),e_n}<0}$, since $\g_\ell=e$ before $s\to x_{\g_\ell}(s)$ enter $\Omega_0:=\Omega\backslash\mathcal{O}$. Then, we  obtain 
\begin{equation}\label{2.26}
\Gamma_{-}^{(1)}=\Gamma_{-}^{(2)}=\Gamma_{-}.
\end{equation}
Let now $y_1^+=(z,s_1^+)\in \Gamma^{(1)}_{+}$ with $z\in\R^{n-1}$. We denote by $\varsigma$ the geodesics in $\M^*_\ell$ connecting $y_1^-:=(z,s_1^-) $ and $y_1^+=(z,s_1^+)$ for some point $y_1^-=(z,s_1^-)\in \Gamma^{(1)}_{\!-}$. Note that $\varsigma$ is straight line in $\M^*_\ell$ with $s$ is being the arclength in both metrics $\psi_\ell^*\g_\ell$, $\ell=1,2$. Since we have also by (\ref{2.26}) that $y_1^-=(z,s_1^-)\in \Gamma^{(2)}_{\!-}$, then  $\varsigma$ relies $y_1^-=(z,s_1^-)$ to $y_2^+=(z,s_2^+)\in \Gamma^{(2)}_{\!+}$ for some point $y_2^+$. We denote
\begin{equation}\label{2.27}
x=\psi_1(y_1^-)=\psi_2(y_1^-)\in\Gamma_-,\quad\xi=\psi_{1*}e_n=\psi_{2*}e_n=e_n,
\end{equation}
where $\psi_{\ell*}: T_{y_1}\M^*_\ell \to T_{\psi_\ell(y_1)}\M$, $\ell=1,2$, is the differential of $\psi_\ell$ at $y_1$. Then we have 
\begin{equation}\label{2.28}
\mathcal{S}_{\g_\ell}(x,e_n)=(\gamma_{x,\xi}^{(\ell)}(\tau_+^{(\ell)}), \dot{\gamma}_{x,\xi}^{(\ell)}(\tau_+^{(\ell)})):=(x_{\ell}^+,\eta_{\ell}), \quad \tau_+^{(\ell)}=\tau^+_{\g_\ell}(x,e_n)=d_{\g_\ell}(x,x_{\ell}^+),
\end{equation}
where $\gamma_{x,\xi}^{(\ell)}=\psi_\ell(\varsigma)$ and for some $x_{\ell}^+\in\Gamma$.\\
Since $x\in \Gamma_-$. By the assumption (\ref{1.17}), we obtain
\begin{equation}\label{2.29}
\tau_+^{(1)}(x,e_n)=\tau_+^{(2)}(x,e_n),\quad  \mathcal{S}_{\g_1}(x,e_n)=\mathcal{S}_{\g_2}(x,e_n).
\end{equation}
In particular, $x_1^+=x_2^+$ and $d_{\g_1}(x,x_1^+)=d_{\g_2}(x,x_2^+)$. Note that the pull-back preserves the arclength. Therefore
\begin{equation}\label{2.30}
d_{\psi_1^*\g_1}(x,y_1^+)=d_{\g_1}(x,x_1^+)=d_{\g_2}(x,x_2^+)=d_{\psi_2^*\g_2}(x,y_2^+).
\end{equation}
Since $s$ is the arclength parameter in both metrics $\psi_1^*\g_1$ and $\psi_2^*\g_2$, we get $s_1^+=s^+_2$, that is $y_+^{(1)}=y_+^{(2)}\in
\Gamma^{(2)}_+$. Therefore
$$
\Gamma^{(1)}_+=\Gamma^{(2)}_+,\quad \textrm{and}\quad \psi_1^{-1}=\psi_2^{-1}\,\,\textrm{in}\,\,\Gamma.
$$
In view of this fact and the estimate (\ref{2.20}), we can derive the following estimate:
\begin{equation}\label{2.31}
\norm{\psi_\ell^*\g_\ell-e}_{\mathcal{C}^{k-2}(\M^*)}\leq C\varepsilon.
\end{equation}
Note that the $\mathcal{C}^{k-2}$-smoothness is due to the fact the pull-back $\psi_\ell^*\g_\ell$ contains the Jacobian of $\psi_\ell$, $\ell=1,2$.
Using the fact used above for the couple of $\mathcal{C}^k$-admissible metrics $(\g_1,\g_2)$, it is readily seen that
\begin{equation}\label{2.32}
\psi_1^{-1}=\psi_2^{-1},\quad \textrm{on}\,\,\mathcal{O}. 
\end{equation}
Therefore $\psi_1^*\g_1$ and $\psi_2^*\g_2$ will satisfy
\begin{equation}\label{2.33}
\psi_1^*\g_1=\psi^*_2\g_2,\quad\textrm{on}\,\,\mathcal{O}^*,
\end{equation}
form some neighborhood $\mathcal{O}^*$ of $\Gamma^*=\p\M^*$.\\
This completes the proof of the Lemma.
\end{proof}
Let $\psi_1$ and $\psi_2$ as above.  We have $\psi_1^{-1}(B_\varrho\setminus\M)=\psi_2^{-1}(B_\varrho\setminus\M)$. Then $\psi=\psi_2\circ\psi_1^{-1}$, is a diffeomorphism from $\bar{\M}$ into $\bar{\M}$, and mapping the unit speed geodesics for $\g_1$ normal to $\Gamma$ into unit speed geodesics for $\g_2$ normal to $\Gamma$ and $\Lambda^{\natural}_{\g_2}=\Lambda^{\natural}_{\psi^*\g_2}$. 
\begin{lemma}\label{L2.4}
Let $\g_\ell\in\mathcal{C}^k(\M)$ and $\psi_\ell$, $\ell=1,2$ as above. Let $\Gamma^\natural\subset\Gamma_{\!-}=\Gamma^*_{\!-}$. Then we have
\begin{equation}\label{2.34}
\norm{\Lambda^\natural_{\psi_1^*\g_1}-\Lambda^\natural_{\psi_2^*\g_2} }_{\mathcal{L}(H^{1,1}_0(\Sigma^*),L^2(\Sigma^\natural))}\leq C \norm{\Lambda^\natural_{\g_1}-\Lambda^\natural_{\g_2} }_{\mathcal{L}(H^{1,1}_0(\Sigma),L^2(\Sigma^\natural))},
\end{equation}
Here $\Sigma^*=(0,T)\times\Gamma^*$.
\end{lemma}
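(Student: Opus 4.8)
The plan is to show that, for each $\ell=1,2$, the two partial Dirichlet-to-Neumann maps $\Lambda^\natural_{\psi_\ell^*\g_\ell}$ and $\Lambda^\natural_{\g_\ell}$ differ only by precomposition with the boundary pullback induced by $\psi_\ell$, and then to exploit the fact, contained in Lemma \ref{L2.3}, that these two boundary pullbacks coincide for $\ell=1$ and $\ell=2$. Passing to operator norms in the resulting factorization will immediately yield (\ref{2.34}).

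First I would set up the transformation law for solutions. Fix $f\in H^{1,1}_0(\Sigma^*)$ and let $v$ be the solution of $(\p_t^2-\Delta_{\psi_\ell^*\g_\ell})v=0$ on $(0,T)\times\M^*$ with vanishing initial data and $v|_{\Sigma^*}=f$. Set $u:=v\circ\psi_\ell^{-1}$, acting in the spatial variable only. Using the naturality of the Laplace--Beltrami operator under pullback, $\Delta_{\g_\ell}(v\circ\psi_\ell^{-1})=(\Delta_{\psi_\ell^*\g_\ell}v)\circ\psi_\ell^{-1}$, together with the fact that $\p_t^2$ commutes with a spatial diffeomorphism, one checks that $u$ solves $(\p_t^2-\Delta_{\g_\ell})u=0$ on $Q$ with vanishing initial data, and that its Dirichlet trace is $u|_\Sigma=f\circ\psi_\ell^{-1}|_\Gamma=:R_\ell f$, where $R_\ell$ denotes composition with $\psi_\ell^{-1}|_\Gamma$. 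By (\ref{2.20}) the map $\psi_\ell$ is $\mathcal{C}^{k-1}$-close to the identity, so $R_\ell$ is a bounded isomorphism $H^{1,1}_0(\Sigma^*)\to H^{1,1}_0(\Sigma)$ with norm controlled by a constant $C=C(\M,M_0,k,\varepsilon)$.

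Next I would match the Neumann data on the measurement set. By construction $\Gamma^\natural\subset\Gamma_{\!-}=\Gamma^*_{\!-}$, and there, since $\g_\ell=e$ on the entry collar $\mathcal{O}$, the diffeomorphism $\psi_\ell$ equals the identity in a full neighborhood of $\Gamma_{\!-}$; hence $u=v$ near $\Gamma^\natural$, the two metrics both equal $e$, the conormals agree, and $\p_\nu^{\psi_\ell^*\g_\ell}v=\p_\nu^{\g_\ell}u$ on $\Sigma^\natural$. This gives the operator identity $\Lambda^\natural_{\psi_\ell^*\g_\ell}=\Lambda^\natural_{\g_\ell}\circ R_\ell$ for each $\ell$.

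Finally I would invoke the admissibility of $(\g_1,\g_2)$: the argument of Lemma \ref{L2.3} shows $\psi_1=\psi_2$ on all of $\Gamma^*$ (they agree on $\Gamma^*_{\!-}$ as the identity, and on $\Gamma^*_+$ because the exit times and scattering relations coincide by (\ref{admis}), forcing $s_1^+=s_2^+$ and $\M^*_1=\M^*_2$), so $R_1=R_2=:R$. Subtracting the two identities then yields $\Lambda^\natural_{\psi_1^*\g_1}-\Lambda^\natural_{\psi_2^*\g_2}=(\Lambda^\natural_{\g_1}-\Lambda^\natural_{\g_2})\circ R$, and passing to operator norms with $\norm{R}\leq C$ produces (\ref{2.34}). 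The main obstacle I anticipate is not the pushforward computation itself but verifying cleanly that the two boundary pullbacks coincide, i.e. that $\psi_1$ and $\psi_2$ agree on the exit portion $\Gamma^*_+$ and not merely on $\Gamma_{\!-}$; this is precisely where the admissibility condition (\ref{admis}) and the equality $\M^*_1=\M^*_2$ from Lemma \ref{L2.3} are indispensable. A secondary technical point is the boundedness of $R$ on the anisotropic trace space $H^{1,1}_0$, which follows from the $\mathcal{C}^{k-1}$-smoothness and closeness to the identity of $\psi_\ell$ provided by (\ref{2.20}).
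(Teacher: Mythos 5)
Your proposal is correct and follows essentially the same route as the paper: transform solutions by the spatial diffeomorphisms $\psi_\ell$, observe that the Neumann traces on $\Sigma^\natural$ are unchanged because $\psi_\ell=\textrm{id}$ near $\Gamma^\natural\subset\Gamma_{\!-}$, use Lemma \ref{L2.3} to identify the two boundary pullbacks ($\psi_1^{-1}=\psi_2^{-1}$ on $\Gamma$), and conclude by bounding the induced composition operator on $H^{1,1}_0$. The only cosmetic difference is that the paper proves the exact norm identity $\norm{f}_{H^{1,1}(\Sigma)}=\norm{f^*}_{H^{1,1}(\Sigma^*)}$ via a defining-function limiting argument, whereas you invoke the standard boundedness of composition by a $\mathcal{C}^{k-1}$-diffeomorphism close to the identity, which suffices for (\ref{2.34}).
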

\begin{proof}
For $f\in H^{1,1}_0(\Sigma)$, let $u_\ell$, $\ell=1,2$, solve the following initial boundary value problem
\begin{equation}\label{2.35}
\left\{
\begin{array}{llll}
\para{\partial^2_t-\Delta_{\g_\ell}}u_\ell=0  & \textrm{in }\; Q:=(0,T)\times\M,\cr
u_\ell(\cdot,0 )=\p_tu_\ell(\cdot,0)=0 & \textrm{in }\; \M ,\cr
u_\ell=f & \textrm{on } \; \Sigma=(0,T)\times\Gamma.
\end{array}
\right.
\end{equation}
Then $v_\ell=\psi_\ell^*u_\ell=u_\ell\circ\psi_\ell$ solves
\begin{equation}\label{2.36}
\left\{
\begin{array}{llll}
\para{\partial^2_t-\Delta_{\psi_\ell^*\g_\ell}}v_\ell=0  & \textrm{in }\; Q^*:=(0,T)\times \M^*,\cr
v_\ell(0,\cdot)=\p_tv_\ell(0,\cdot)=0 & \textrm{in }\; \M^* ,\cr
v_\ell=\psi_\ell^*f:=f\circ\psi_\ell & \textrm{on } \; \Sigma^*=(0,T)\times\Gamma^*.
\end{array}
\right.
\end{equation}
So using, the fact that $\Gamma^\natural\subset\Gamma_-=\Gamma^*_-$, and $\psi_\ell=\textrm{id}$ near $\Gamma^\natural$, $\ell=1,2$, we obtain
\begin{equation}\label{2.37}
\Lambda^\natural_{\g_\ell}(f)=\p_\nu u_{\ell |\Sigma^\natural}=\p_\nu v_{\ell |\Sigma^\natural}=\Lambda^\natural_{\psi_\ell^*\g_\ell}(f^*),\quad f^*=f\circ\psi_1=f\circ\psi_2.
\end{equation}
Therefore
\begin{equation}\label{2.38}
\norm{(\Lambda^\natural_{\psi_1^*\g_1}-\Lambda^\natural_{\psi_2^*\g_2}) f^*}=\norm{(\Lambda^\natural_{\g_1}-\Lambda^\natural_{\g_2}) f}\leq \norm{\Lambda^\natural_{\g_1}-\Lambda^\natural_{\g_2}}\norm{f}_{H^{1,1}(\Sigma)}.
\end{equation}
We may assume that, without loos of generality, $\Omega=\set{x\in\R^n,\,\rho(x)<0}$, where $\rho:\R^n\to\R$ is $\mathcal{C}^1$ such that $d\rho\neq 0$ at $\Gamma=\set{x\in\R^n,\,\,\rho(x)=0}$. We have
\begin{equation}\label{2.39}
\int_{\Gamma}\abs{f(t,x)}^2d\sigma(x)=\lim_{\delta\to 0}\int_\M\theta\para{\frac{\rho(x)}{\delta}}\delta^{-1}\abs{u_1(t,x)}^2\abs{d\rho(x)}dx.
\end{equation}
That limit does depend on the choice of the defining function $\rho$ of $\Gamma=\p\M$, nor the choice of the function $\theta\in\mathcal{C}^\infty_0(\R,\R^+)$,  satisfying, $\int_\R\theta(s)ds=1$. Consider $\rho^*=\rho\circ\psi_1$ the defining function of $\Gamma^*=\p\M^*$:
$$
\Gamma^*=\set{x\in\R^n,\,\,\rho^*(x)=0},
$$
 and taking the variable change $y=\psi_1(x)$ the right integral in (\ref{2.39}), can be write as
\begin{equation}\label{2.40}
\int_\M\theta\para{\frac{\rho(x)}{\delta}}\delta^{-1}\abs{u_1(t,x)}^2\abs{d\rho(x)}dx=\int_{\M^*}\theta\para{\frac{\rho^*(x)}{\delta}}\delta^{-1}\abs{v_1(t,x)}^2\abs{d\rho^*(x)}dx.
\end{equation}
Taking $\delta\to 0$, we obtain from (\ref{2.40})
\begin{equation}\label{2.41}
\int_{\Gamma}\abs{f(t,x)}^2d\sigma(x)=\int_{\Gamma^*}\abs{f^*(t,x)}^2d\sigma^*(x).
\end{equation}
We repeat the same argument to the derivatives of $f$, we obtain, $\norm{f}_{H^{1,1}(\Sigma)}=\norm{f^*}_{H^{1,1}(\Sigma^*)}$. We deduce that from (\ref{2.38})
$$
\norm{(\Lambda^\natural_{\psi_1^*\g_1}-\Lambda^\natural_{\psi_2^*g_2}) f^*}\leq C\norm{\Lambda^\natural_{\g_1}-\Lambda^\natural_{\g_2}}\norm{f^*}_{H^{1,1}(\Sigma^*)}.
$$
This completes the proof. 
\end{proof}
Therefore, it suffices to consider the problem on $\M^*$ for metrics with the form (\ref{2.21}) and satisfying (\ref{2.24}). Our goal is to show
\begin{equation}\label{2.42}
\norm{\psi_1^*\g_1-\psi_2^*\g_2}_{L^2(\M^*)}\leq C\Phi_{\alpha,\beta}\para{\norm{\Lambda^{\natural}_{\psi_1^*\g_1}-\Lambda^{\natural}_{\psi_2^*\g_2}}},
\end{equation}
and recall that
\begin{equation}\label{2.43}
\norm{\g_1-\psi^*\g_2}_{L^2(\M)}\leq C\norm{\psi_1^*\g_1-\psi_2^*\g_2}_{L^2(\M^*)},
\end{equation}
where $\psi=\psi_2\circ\psi_1^{-1}$. Then the estimate (\ref{1.18}) follows easily from (\ref{2.42}), (\ref{2.43}) and (\ref{2.34}).\\
Hence,  we denote $\psi_\ell^*\g_\ell$ by $\g_\ell$, and $\M^*$ by $\M$. From now on we will follow this notation.
\section{Geometrical optics solutions and integral identity}
\setcounter{equation}{0}
As stated in the previous section, we aim to establish (\ref{2.42}). To this end, we first proceed to the construction of geometrical optics solutions to the  wave equation. With an abuse of notations, from now on we will replace the pulled back metric $\psi_1^*\g_1$ and $\psi_2^*\g_2$ by $\g_1$ and $\g_2$ respectively, and the domains $\M^*$ and $\mathcal{O}^*$ by $\M$ and $\mathcal{O}$ respectively. Now we summarize the properties of $\g_1$ and $\g_2$. Following (\ref{2.21}), and (\ref{2.24}), we have
\begin{equation}\label{3.1}
(\g_\ell)_{in}=\delta_{in},\quad  i=1,\ldots,n,\,\,\ell=1,2,\quad \g_\ell\in\mathcal{C}^{k-1}(\M).
\end{equation}
and
\begin{equation}\label{3.2}
\g_1=\g_2\,\,\textrm{in}\,\,\mathcal{O}\subset \M,\quad \norm{\g_\ell-e}_{\mathcal{C}^{k-2}(\M)}\leq C\varepsilon,\quad \ell=1,2.
\end{equation}
 In view of conditions (\ref{3.2}), we can extend $\g_1$ and $\g_2$ to be $\mathcal{C}^{k-2}$-smooth metrics on $\M_1 \Supset \M$ with $\g_1=\g_2$ on $\M_1\backslash\M$ such that $\M_1$ is geodesically convex with respect to any one of them. Even increase the constant $C$ in (\ref{3.2}), we can assume that the $\mathcal{C}^{k-2}(\M_{1})$-norms of $(\g_\ell-e)$, $\ell=1,2$, are also bounded by $C\varepsilon$.
\smallskip

In order to prove the main results, we first need some basic estimates of the solutions for the wave equation. We  consider the following initial boundary value problem for the wave equation
\begin{equation}\label{3.3}
\left\{
\begin{array}{llll}
\para{\partial_t^2-\Delta_\g}r(t,x)=z(t,x)  & \textrm{in }\,\,Q,\cr
r(0,x)=0,\quad\p_tr(0,x)=0 & \textrm{in }\,\,\M,\cr
r(t,x)=0 & \textrm{on } \,\, \Sigma.
\end{array}
\right.
\end{equation}
We know this problem is well-posed, since we have the following existence and uniqueness result, see \cite{BF}.
\begin{lemma}\label{L3.1}
Let $T>0$, $\g\in \mathcal{C}^2(\M)$. Assuming $z\in H^1(0,T;L^2(\M))$ such that $z(0,\cdot)=0$ in $\M$, then there exists a unique solution
$r$ to \eqref{3.3} such that
 $$
 r\in \mathcal{C}^2([0,T];L^2(\M))\cap \mathcal{C}^1([0,T];H^1_0(\M))\cap \mathcal{C}([0,T]; H^2(\M)).
 $$ 
 Furthermore, there is a constant $C>0$ such that
\begin{equation}\label{3.4}
\norm{\p_tr(t,\cdot)}_{L^2(\M)}+\norm{\nabla r(t,\cdot)}_{L^2(\M)}\leq C\norm{z}_{L^2(Q)},
\end{equation}
and
\begin{equation}\label{3.5}
\norm{\p_t^2 r(t,\cdot)}_{L^2(\M)}+\norm{\p_t r(t,\cdot)}_{H^1(\M)}+\norm{r(t,\cdot)}_{H^2(\M)}\leq C\norm{z}_{H^1(0,T;L^2(\M))}.
\end{equation}
\end{lemma}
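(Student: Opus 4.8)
The plan is to obtain existence and uniqueness from the standard semigroup (or Galerkin) machinery for $-\Delta_\g$ with Dirichlet boundary conditions, and then to derive the two a priori bounds by the energy method, reducing the higher-order estimate \eqref{3.5} to the basic one \eqref{3.4} by differentiating the equation in time. Since $-\Delta_\g$ with domain $H^2(\M)\cap H^1_0(\M)$ is self-adjoint and nonnegative on $L^2(\M)$, it generates a strongly continuous group on the energy space $H^1_0(\M)\times L^2(\M)$; for a source $z\in H^1(0,T;L^2(\M))$ with $z(0,\cdot)=0$ the inhomogeneous Cauchy problem \eqref{3.3} then has a unique strong solution with the claimed continuity in $t$. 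Equivalently one runs a Galerkin scheme on the eigenfunctions of $-\Delta_\g$, derives the bounds below uniformly in the discretization, and passes to the limit.

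To prove \eqref{3.4} I would multiply the equation by $\p_t r$, integrate over $\M$, and apply Green's formula \eqref{2.4} with the boundary condition $r=0$ on $\Sigma$ (hence $\p_t r=0$ on $\Gamma$), so that the boundary term drops and, writing $E(t)=\tfrac12\int_\M\para{\abs{\p_t r}^2+\abs{\nabla r}^2}\dv$,
\[
E'(t)=\int_\M z\,\p_t r\dv\leq \norm{\p_t r(t,\cdot)}_{L^2(\M)}\norm{z(t,\cdot)}_{L^2(\M)}\leq E(t)+\tfrac12\norm{z(t,\cdot)}_{L^2(\M)}^2 .
\]
Since $E(0)=0$, Gr\"onwall's inequality gives $E(t)\leq C\norm{z}_{L^2(Q)}^2$ with $C=C(T)$, which is exactly \eqref{3.4}.

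The estimate \eqref{3.5} I would reduce to \eqref{3.4}. Set $w=\p_t r$; then $w$ solves the same equation with source $\p_t z$ and homogeneous Dirichlet data, and its initial data vanish: $w(0,\cdot)=\p_t r(0,\cdot)=0$, while reading the equation at $t=0$ and using $r(0,\cdot)=0$, $z(0,\cdot)=0$ yields $\p_t w(0,\cdot)=\p_t^2 r(0,\cdot)=\Delta_\g r(0,\cdot)+z(0,\cdot)=0$. Applying \eqref{3.4} to $w$ bounds $\norm{\p_t^2 r(t,\cdot)}_{L^2(\M)}+\norm{\nabla\p_t r(t,\cdot)}_{L^2(\M)}$ by $C\norm{\p_t z}_{L^2(Q)}$. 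Finally, viewing the equation as the elliptic problem $-\Delta_\g r(t,\cdot)=z(t,\cdot)-\p_t^2 r(t,\cdot)$ in $\M$ with $r(t,\cdot)=0$ on $\Gamma$, and invoking $H^2$ elliptic regularity for $\Delta_\g$ (valid since $\g\in\mathcal{C}^2$), I get $\norm{r(t,\cdot)}_{H^2(\M)}\leq C\para{\norm{z(t,\cdot)}_{L^2(\M)}+\norm{\p_t^2 r(t,\cdot)}_{L^2(\M)}+\norm{r(t,\cdot)}_{L^2(\M)}}$. The embedding $H^1(0,T;L^2(\M))\hookrightarrow\mathcal{C}([0,T];L^2(\M))$ controls $\norm{z(t,\cdot)}_{L^2(\M)}$ pointwise, and $\norm{r(t,\cdot)}_{L^2(\M)}\leq\int_0^t\norm{\p_t r(s,\cdot)}_{L^2(\M)}\,ds$ is controlled by \eqref{3.4}, so all three terms in \eqref{3.5} follow.

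The main obstacle is not any single estimate but the rigorous justification of differentiating in time: one must either carry the entire argument out at the level of the Galerkin approximations (where all $t$-derivatives exist and the bounds are uniform) and then pass to the limit, or check that the semigroup solution is regular enough for $w=\p_t r$ to be itself a bona fide strong solution of its own wave equation. The hypothesis $z(0,\cdot)=0$ is precisely what is needed here, since it forces the compatibility condition $\p_t^2 r(0,\cdot)=0$ and thereby permits \eqref{3.4} to be applied to $w$ with vanishing initial data; without it one would lose control of the second-order quantities at $t=0$.
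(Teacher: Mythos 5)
Your proof is correct, and it is essentially the intended argument: the paper itself does not prove Lemma \ref{L3.1} but simply cites \cite{BF}, and your combination of semigroup/Galerkin theory for well-posedness, the basic energy identity for \eqref{3.4}, time-differentiation of the equation (applying \eqref{3.4} to $w=\p_t r$) plus $H^2$ elliptic regularity for \eqref{3.5} is precisely the standard proof underlying that citation. Your reading of the hypothesis $z(0,\cdot)=0$ is also the right one --- it forces the compatibility condition $\p_t^2r(0,\cdot)=\Delta_\g r(0,\cdot)+z(0,\cdot)=0$ so that the first-order estimate applies to $\p_t r$ with vanishing initial data, and your remark that the time-differentiation must be justified at the Galerkin (or strong-solution) level correctly identifies the only point requiring care.
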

As mentioned above, the proof of Theorem \ref{Th1} is based on the use of geometric optical solutions. We use the oscillating solutions of the form
$$
u(t,x)=\sum_{j=1}^k a_j(t,x;h)e^{i\varphi_j(t,x)/h}+r_h(t,x), \quad (t,x)\in Q
$$
with $h\in (0,h_0)$ a small parameter, $r_h$ is remainder term that admits a decay with respect to the parameter $h$, and $\varphi_j$ are real valued functions, $j=1,\ldots,k$. Inspired by Bellassoued and Ferriera \cite{BF}, Bellassoued \cite{B} and Bellassoued and Rezig \cite{BR}, we use these particular solutions to prove that the hyperbolic inverse boundary value problem reduces to the problem to invert, in some sens, the geodesic ray transform on $(\M,\g)$. As indicated above, the assumption that on the smallness on the metrics $(\M, \g_\ell)$, $\ell=1,2$, guarantees that this metrics is simple and the geodesic ray transform transform is indeed $s$-invertible.
\smallskip

We introduce now the space $\mathcal{A}(Q)=H^1(0,T;H^2(\M))\cap H^3(0,T;L^2(\M))$, equipped with the norm
\begin{equation}\label{3.6}
\nr(a)=\norm{a}_{H^1(0,T;H^2(\M))}+\norm{a}_{H^3(0,T;L^2(\M))},\quad a\in \mathcal{A}(Q).
\end{equation}
\subsection{Solutions for the backward wave equation}
We suppose, for a moment, that we are able to find a function $\ek_1\in\mathcal{C}^2(\M)$ which satisfies the eikonal equation
\begin{equation}\label{3.7}
\abs{\nabla_{\g_1}\ek_1}_{\g_1}^2=\sum_{i,j=1}^n\g_1^{ij} \p_i\ek_1\p_j \ek_1=1,\qquad \forall x\in \M,
\end{equation}
and assume that there exist a function $a_1\in \mathcal{A}(Q)$ which solves the transport equation
\begin{equation}\label{3.8}
\mathcal{L}_{\g_1,\ek_1}a_1(t,x):=\p_t a_1+\sum_{j,k=1}^n \g_1^{jk}\p_j\ek_1\p_k a_1+\frac{1}{2} (\Delta_{\g_1} \ek_1)a_1=0,\qquad \forall t\in\R,\, x\in\M,
\end{equation}
which satisfies for $T>T^*>\textrm{Diam}(\M)+2\delta$
\begin{equation}\label{3.9}
a_1(t,x)|_{t\leq \delta}=a_1(t,x)|_{t\geq T^*-\delta}=0,\quad \forall x\in \M.
\end{equation}
\begin{lemma}\label{L3.2}
 Let  $\g_1\in \mathcal{C}^k(\M)$, $k\geq 2$, with $\norm{\g}_{\mathcal{C}^k(\M)}\leq M_0$. Choose $a_1\in\mathcal{A}(Q)$ solve  (\ref{3.8})-(\ref{3.9}) and $\ek_1\in\mathcal{C}^2(\M)$ satisfy (\ref{3.7}). Then for all $h>0$ small enough, there exists a solution 
 $$
 u_1(t,x;h)\in \mathcal{C}^2([0,T^*];L^2(\M))\cap\mathcal{C}^1([0,T^*];H^1(\M))\cap\mathcal{C}([0,T^*];H^2(\M))
 $$ 
 of the wave equation
\begin{equation*}
(\p^2_t-\Delta_{\g_1})u_1=0,\quad \textrm{in}\quad Q^*:=(0,T^*)\times\M,
\end{equation*}
with the final condition
\begin{equation*}
u_1(T^*,x)=\p_tu_1(T^*,x)=0,\quad \textrm{in}\quad \M,
\end{equation*}
 of the form
\begin{equation}\label{3.10}
u_1(t,x)=a_1(t,x)e^{i(\ek_1(x)-t)/h}+r_{1,h}(t,x),
\end{equation}
the remainder $r_{1,h}(t,x)$ is such that
\begin{align*}
r_{1,h}(t,x)&=0,\quad (t,x)\in \Sigma , \\
r_{1,h}(T^*,x)=\p_t r_{1,h}(T^*,x)&=0,\quad x\in \M.
\end{align*}
Furthermore, there exist $C>0$, $h_0>0$ such that, for all $h \leq h_0$ the following estimates hold true.
\begin{equation}\label{3.11}
\sum_{k=0}^2 \sum_{j=0}^{k} h^{k-1} \|\p_t^{j} r_{1,h}(t,\cdot)\|_{H^{k-j}(\M)}\leq C\nr(a_1).
\end{equation}
The constant $C$ depends only on $T^*$, $\M$ and $M_0$ (that is $C$ does not depend on  $h$ and $\varepsilon<1$). 
\end{lemma}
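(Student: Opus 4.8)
The plan is to construct the geometric optics solution (\ref{3.10}) by substituting the ansatz $u_1 = a_1 e^{i(\ek_1-t)/h} + r_{1,h}$ directly into the wave operator $\p_t^2 - \Delta_{\g_1}$ and analyzing the resulting expansion in powers of $h$. First I would compute $(\p_t^2 - \Delta_{\g_1})\bigl(a_1 e^{i(\ek_1-t)/h}\bigr)$ by expanding derivatives via the product and chain rules. The phase $\varphi(t,x) = \ek_1(x) - t$ satisfies $\p_t\varphi = -1$ and $\nabla_{\g_1}\varphi = \nabla_{\g_1}\ek_1$. Collecting terms, the coefficient of $h^{-2}$ is proportional to $\bigl((\p_t\varphi)^2 - \abs{\nabla_{\g_1}\ek_1}_{\g_1}^2\bigr)a_1 = (1 - 1)a_1 = 0$, which vanishes precisely because of the eikonal equation (\ref{3.7}). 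The coefficient of $h^{-1}$ reduces, after using $\p_t\varphi = -1$, to a multiple of the transport operator $\mathcal{L}_{\g_1,\ek_1}a_1$ from (\ref{3.8}), hence also vanishes by the choice of $a_1$. The only surviving contribution is the $h^0$ term, namely $-(\p_t^2 - \Delta_{\g_1})a_1 \cdot e^{i\varphi/h}$, which does not decay in $h$ but is a fixed smooth function.

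Consequently, imposing $(\p_t^2 - \Delta_{\g_1})u_1 = 0$ forces the remainder to solve a backward wave equation with source
\begin{equation*}
(\p_t^2 - \Delta_{\g_1})r_{1,h}(t,x) = (\p_t^2 - \Delta_{\g_1})a_1(t,x)\, e^{i(\ek_1(x)-t)/h} =: z_h(t,x),
\end{equation*}
together with zero final data at $t = T^*$ and homogeneous Dirichlet data on $\Sigma$. The homogeneity of the boundary and final conditions for $r_{1,h}$ follows from (\ref{3.9}): since $a_1$ vanishes for $t \geq T^* - \delta$, the principal term of $u_1$ vanishes near $t = T^*$, so the stated final conditions on $r_{1,h}$ are exactly those inherited from $u_1(T^*,\cdot) = \p_t u_1(T^*,\cdot) = 0$. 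The next step is to apply the well-posedness result Lemma \ref{L3.1} (in its time-reversed form, which is legitimate since the wave equation is reversible) to obtain $r_{1,h}$ in the desired regularity class. The bound (\ref{3.5}) gives control of $r_{1,h}$ in $H^2(\M)$ and of $\p_t r_{1,h}$ in $H^1(\M)$ by $\norm{z_h}_{H^1(0,T;L^2(\M))}$, provided $z_h(T^*,\cdot) = 0$, which again holds by (\ref{3.9}).

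The heart of the estimate (\ref{3.11}) is tracking how the norm of the oscillatory source $z_h$ depends on $h$. Here each time derivative hitting $e^{i(\ek_1-t)/h}$ produces a factor $h^{-1}$, so naively $\norm{z_h}_{H^1(0,T;L^2(\M))} = O(h^{-1})$; this is exactly the source of the $h^{k-1}$ weighting on the left of (\ref{3.11}), which is engineered so that each factor of $h^{-1}$ arising from differentiating the phase in the remainder is compensated. More precisely, I would apply the energy estimates (\ref{3.4})--(\ref{3.5}) and their higher-order analogues obtained by differentiating (\ref{3.3}) in $t$, then carefully count: the worst term on the left, $h^{1}\norm{r_{1,h}}_{H^2}$ or $h\norm{\p_t^2 r_{1,h}}_{L^2}$, is controlled because the energy identity trades one power of $h$ from the $H^1(0,T)$-norm of the $h^{-1}$-singular source against the weights. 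The bookkeeping must be done so that the constant $C$ depends only on $T^*$, $\M$, $M_0$ through the $\mathcal{A}(Q)$-norm $\nr(a_1)$ of the amplitude and is uniform in $h \leq h_0$ and in $\varepsilon < 1$.

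The main obstacle I anticipate is the uniform-in-$h$ bookkeeping in the last step: one must obtain the sharp power of $h$ for each mixed norm $\norm{\p_t^j r_{1,h}}_{H^{k-j}}$ simultaneously for $0 \leq j \leq k \leq 2$, and verify that differentiating the equation in $t$ and applying elliptic regularity for $\Delta_{\g_1}$ (to convert spatial $H^2$ control into the energy framework) does not generate uncompensated factors of $h^{-1}$. A secondary technical point is confirming that the differentiated source terms still vanish at the final time, so that Lemma \ref{L3.1} applies at each order; this is guaranteed by the support condition (\ref{3.9}) on $a_1$, but it must be checked for every derivative that appears. The smallness hypothesis $\norm{\g_1 - e}_{\mathcal{C}^{k-2}} \leq C\varepsilon$ and the $\mathcal{C}^k$ bound on $\g_1$ enter only to keep the coefficients of $\Delta_{\g_1}$, and hence the constants in the energy estimates, uniformly bounded, ensuring $C$ is independent of $\varepsilon$.
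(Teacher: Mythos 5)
Your construction and reduction coincide with the paper's: substituting the ansatz, the eikonal equation \eqref{3.7} kills the $h^{-2}$ term, the transport equation \eqref{3.8} kills the $h^{-1}$ term, and the remainder is defined as the solution of the backward mixed problem \eqref{3.12} with source $z_{1,h}=-e^{i(\ek_1-t)/h}\,(\p_t^2-\Delta_{\g_1})a_1$ (you dropped this minus sign, which is harmless). The $k=1$ and $k=2$ parts of \eqref{3.11} are also handled exactly as in the paper: since $\norm{z_{1,h}}_{L^2(Q^*)}\leq C\nr(a_1)$, the energy estimate \eqref{3.4} gives the unweighted terms, and since $\norm{z_{1,h}}_{H^1(0,T^*;L^2(\M))}\leq Ch^{-1}\nr(a_1)$, the estimate \eqref{3.5} gives the terms carrying the weight $h$; the support condition \eqref{3.9} guarantees the compatibility $z_{1,h}(T^*,\cdot)=0$ needed for Lemma \ref{L3.1}, as you note.

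There is, however, a genuine gap at the $k=0$ term of \eqref{3.11}, which demands $\norm{r_{1,h}(t,\cdot)}_{L^2(\M)}\leq Ch\,\nr(a_1)$ --- a \emph{gain} of one power of $h$. No energy estimate applied to \eqref{3.12}, nor any higher-order analogue obtained by differentiating the equation in $t$ (which only moves you up the regularity scale), can produce decay in $h$: the source $z_{1,h}$ has $h$-independent modulus, so \eqref{3.4} with zero final data yields at best $\norm{r_{1,h}(t,\cdot)}_{L^2(\M)}\leq C\nr(a_1)$, i.e.\ $O(1)$, leaving your $k=0$ term of size $O(h^{-1})$ rather than $O(1)$. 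You have in fact misidentified the worst term: it is not $h\norm{r_{1,h}}_{H^2}$ or $h\norm{\p_t^2 r_{1,h}}_{L^2}$ (these follow directly from \eqref{3.5}), but $h^{-1}\norm{r_{1,h}}_{L^2}$, and your ``bookkeeping'' scheme contains no mechanism for it. The missing idea is to exploit the time oscillation of the source. The paper introduces the antiderivative $\breve{r}_{1,h}(t,x)=\int_t^{T^*}r_{1,h}(s,x)\,\dd s$, which solves the same backward problem with source
\begin{equation*}
\breve{z}_{1,h}(t,x)=\int_t^{T^*}z_{1,h}(s,x)\,\dd s=-ih\int_t^{T^*}z(s,x)\,\p_s\para{e^{i(\ek_1(x)-s)/h}}\dd s,\qquad z=(\p_t^2-\Delta_{\g_1})a_1;
\end{equation*}
integrating by parts in $s$ --- using $\p_t z\in L^2(Q^*)$ (this is precisely where the $H^3(0,T;L^2(\M))$ component of $\nr(a_1)$ enters) and $z(T^*,\cdot)=0$ from \eqref{3.9}, the boundary term at $s=t$ being $O(h)$ --- gives $\norm{\breve{z}_{1,h}}_{L^2(Q^*)}\leq Ch\nr(a_1)$, whence $\norm{r_{1,h}(t,\cdot)}_{L^2(\M)}=\norm{\p_t\breve{r}_{1,h}(t,\cdot)}_{L^2(\M)}\leq Ch\nr(a_1)$ by \eqref{3.4}. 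Equivalently one can integrate by parts in $s$ in the Duhamel representation of $r_{1,h}$; either way, some device converting the $e^{-is/h}$ oscillation into a factor of $h$ is indispensable, and your proposal contains none.
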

\begin{proof}
Let $r_{1,h}(t,x)$ solves the following homogeneous boundary value problem
\begin{equation}\label{3.12}
\left\{
\begin{array}{llll}
\para{\partial^2_t-\Delta_{\g_1}}r(t,x)=z_{1,h} (t,x)
& \textrm{in }\,\, Q^*,\cr
r(T^*,x)=\p_tr(T^*,x)=0,& \textrm{in
}\,\,\M,\cr
r(t,x)=0 & \textrm{on} \,\, \Sigma,
\end{array}
\right.
\end{equation}
where the source term $z_{1,h}$ is given by 
\begin{equation}\label{3.13}
z_{1,h}(t,x)=-\para{\partial_t^2-\Delta_{\g_1}}\para{a_1(t,x)e^{i(\ek_1-t)/h}}.
\end{equation}
To prove our Lemma it would be enough to show that $r$ satisfies the estimates (\ref{3.11}). By a simple computation, we have
\begin{align}\label{3.14}
-z_{1,h} (t,x)&=e^{i(\ek_1(x)-t)/h}\para{\p_t^2-\Delta_{\g_1}}a_1(t,x)-2ih^{-1} e^{i(\ek_1(x)-t)/h}\mathcal{L}_{\g_1,\ek_1}a_1(t,x)\cr
&\quad-h^{-2} a_2(t,x) e^{i(\ek_1(x)-t)/h}\para{1-\abs{\nabla_{\g_1}\ek_1}_{\g_1}^2}.
\end{align}
Taking into account  (\ref{3.8}) and (\ref{3.7}), the right-hand side of (\ref{3.14}) becomes
\begin{equation}\label{3.15}
z_{1,h} (t,x)=-e^{i(\ek_1(x)-t)/h}(\p_t^2-\Delta_{\g_1})a_1(t,x)\equiv-e^{i(\ek_1(x)-t)/h}z(t,x).
\end{equation}
Since $a_1\in\mathcal{A}(Q)$ and satisfies (\ref{3.9}), we deduce that $z\in H^1_0(0,T^*;L^2(\M))$. Furthermore, there is a constant $C>0$ does not depend on $h$ and $\varepsilon$, such that
\begin{equation}\label{3.16}
\norm{z}_{L^2(Q^*)}+\norm{\p_tz}_{L^2(Q^*)}\leq C\nr(a_1).
\end{equation}
By Lemma \ref{L3.1}, we find
\begin{equation}\label{3.17}
r_{1,h}\in \mathcal{C}^2([0,T^*];L^2(\M))\cap\mathcal{C}^1([0,T^*];H^1_0(\M))\cap\mathcal{C}([0,T^*];H^2(\M)).
\end{equation}
Further, the function
$$
\breve{r}_{1,h}(t,x)=\int_{t}^{T^*}r_h(s,x)ds,
$$
solves the mixed hyperbolic problem (\ref{3.12}) with the right side
$$
\breve{z}_{1,h}(t,x):=\int_{t}^{T^*}z_{1,h} (x,s)ds=-ih\int_t^{T^*}z(x,s)\p_s\para{e^{i(\ek_1(x)-s)/h}}ds.
$$
Integrating by part with respect to $s$, we conclude from (\ref{3.16}), that
$$
\norm{\breve{z}_{1,h}}_{L^2(Q^*)}\leq Ch\nr(a_1).
$$
and by (\ref{3.4}), we get
\begin{eqnarray}\label{3.18}
\norm{r_{1,h}(t,\cdot)}_{L^2(\M)}=\norm{\p_t\breve{r}_{1,h}(t,\cdot)}_{L^2(\M)}\leq
Ch\nr(a_1).
\end{eqnarray}
Since $\norm{z_{1,h}}_{L^2(Q^*)}+h\norm{\p_tz_{1,h} }_{L^2(Q^*)}\leq C\nr(a_1)$, by using again the energy estimates for the problem (\ref{3.12}), obtain
\begin{eqnarray}\label{3.19}
\norm{\p_tr_{1,h}(t,\cdot)}_{L^2(\M)}+\norm{\nabla r_{1,h} (t,\cdot)}_{L^2(\M)}\leq C\nr(a_1).
\end{eqnarray}
and by (\ref{3.5}), we have
\begin{eqnarray}\label{3.20}
\norm{\p_t^2 r_{1,h}(t,\cdot)}_{L^2(\M)}+\norm{\p_t r_{1,h}(t,\cdot)}_{H^1(\M)}+\norm{r_{1,h}(t,\cdot)}_{H^2(\M)}\leq Ch^{-1}\nr(a_1).
\end{eqnarray}
Collecting (\ref{3.18})-(\ref{3.19}) and (\ref{3.20}) we get (\ref{3.11}).
The proof is complete.
\end{proof}

We will now construct the phase function $\ek_1$ solution to the eikonal equation (\ref{3.7}) and the amplitude $a_1$ solution to the transport equation (\ref{3.8}). As mentioned above the Riemannian manifold $(\M,\g)$ is simple. Then the eikonal equation (\ref{3.7}) can be solved globally on $\M$. To see this, we pick $y\in \Gamma_1:=\p \M_1$. Denote points in $\M_1$ by $(r,\theta)$ where $(r,\theta)$ are polar normal coordinates in $\M_1$ with center $y$. That is $x=\exp_{y}(r\theta)$ where $r>0$ and $\theta\in S_{y}\M_1=\set{\xi\in T_{y}\M_1,\,\,\abs{\xi}=1}$. In these coordinates (which depend on the choice of $y$) the metric takes the form $\widetilde{\g}_1(r,\theta)=\dd r^2+\check{\g}_1(r,\theta)$,  where $\check{\g}_0(r,\theta)$ is a smooth positive definite metric. For any function $u$ compactly supported in $\M$, we set for $r>0$ and $\theta\in S_y\M_1$,
$$
\widetilde{u}(r,\theta)=u(\exp_{y}(r\theta)),
$$
where we have extended $u$ by $0$ outside $\M$ and we use this notation to indicate the representation in the polar normal coordinates. An explicit solution to the eikonal equation (\ref{3.7}) is the geodesic distance function to $y \in \Gamma_1$
\begin{equation}\label{3.21}
\ek_1(x)=d_{\g_1}(x,y).
\end{equation}
By the simplicity (smallness) assumption, since $y\in \overline{\M}_1\backslash\overline{\M}$, we have $\ek_1\in\mathcal{C}^\infty(\M)$ and
\begin{equation}\label{3.22}
\widetilde{\ek}_1(r,\theta)=r=d_{\g_1}(x,y).
\end{equation}
Moreover, we have (see \cite{B})
\begin{equation}\label{3.23}
\nabla\ek_1(x)=\dot{\gamma}_{y,\theta}(r),\quad r=d_{\g_1}(x,y)
\end{equation}
where $\gamma_{y,\theta}$ is the unique geodesic connecting $y$ to $x$.\\
The next step is to solve the transport equation (\ref{3.8}). Let $\alpha_{\g_1}=\alpha_{\g_1}(r,\theta)$ be the square of the volume element in geodesic polar coordinates. The transport equation (\ref{3.8}) becomes (see \cite{BF})
\begin{equation}\label{3.26}
\p_t \widetilde{a}_1+\p_r \widetilde{a}_1+\frac{1}{4}\widetilde{a}_1\alpha_{\g_1}^{-1}\p_r \alpha_{\g_1}=0.
\end{equation}
Let $\kappa_1\in\mathcal{C}_0^\infty(\R)$ and $b\in H^2(\p_+S\M)$. Let us define $\widetilde{a}_1$ by
\begin{equation}\label{3.27}
\widetilde{a}_1(t,r,\theta)=\alpha_{\g_1}^{-1/4}\kappa_1(t-r)b(y,\theta).
\end{equation}
Then $\widetilde{a}_1$ is a solution of the transport equation (\ref{3.8}).
Now if we assume that $\mathrm{supp}\kappa_1 \subset (0,\delta)$,  then for any $x=\exp_y(r\theta)\in \M$, it is easy to see that $\widetilde{a}_1(t,r,\theta)=0$ if $t\leq \delta$ and $t\geq T^*-\delta$ for some $T^*>\mathop{\rm Diam} \M+2\delta$.
\subsection{Solutions for the forward wave equation}
Let $\ek_1$, $\ek_2$ be two phase functions solving the eikonal equation with respect to the metrics $\g_1$ and $\g_2$ respectively.
\begin{equation}\label{3.31}
\abs{\nabla_{\g_1}\ek_1}^2_{\g_1}=\sum_{j,k=1}^n\g_1^{jk}\p_j\ek_1 \p_k\ek_1=1,\quad 
\abs{\nabla_{\g_2}\ek_2}^2_{\g_2}=\sum_{j,k=1}^n \g_2^{jk}\p_j\ek_2\p_k\ek_2=1,
\quad\textrm{ on } \M.
\end{equation}
Let $a_2\in \mathcal{A}(Q)$ solve the transport equation in $\R\times\M$ with respect to the metric $\g_1$ and the phase function $\ek_1$
\begin{equation}\label{3.32}
\mathcal{L}_{\g_1,\ek_1}a_2(t,x):=\p_t a_2+\sum_{j,k=1}^n\g_1^{jk}\p_j\ek_1\p_k a_2+\frac{a_2}{2}\Delta_{\g_1}\ek_1=0,
\end{equation}
and we denote the symmetric $2$-tensor $\mathfrak{s}=(\mathrm{s}_{jk})_{jk}$ defined by
\begin{equation}\label{3.33}
\mathrm{s}_{jk}=\g_2^{jk}-\g_1^{jk},\quad j,k=1,\ldots,n.
\end{equation}
Let $a_3\in \mathcal{A}(Q)$ solve the following transport equation in $\R\times\M$ with respect to the metric~$\g_2$
\begin{equation}\label{3.34}
\mathcal{L}_{\g_2,\ek_2}a_3(t,x;h):=\p_t a_3+\sum_{j,k=1}^n \g_2^{jk} \p_j \ek_2\p_k a_3+\frac{a_3}{2}\Delta_{\g_2}\ek_2= a_2(t,x)m(x,h),
\end{equation}
where
\begin{equation}\label{3.35}
m(x,h)=-\frac{i}{2}e^{i(\ek_1-\ek_2)/h}\sum_{j,k=1}^n \mathrm{s}_{jk} \p_j\ek_1\p_k \ek_1,
\end{equation}
and which satisfies the bound
\begin{equation}\label{3.36}
\nr(a_3)\leq Ch^{-2} \norm{\mathfrak{s}}_{\mathcal{C}^2} \nr(a_2).
\end{equation}
Let us now explain how to construct a solution $a_3$ satisfying (\ref{3.34}) and (\ref{3.36}). To solve the transport equation (\ref{3.34})
with (\ref{3.36}) it is enough to take, in the geodesic polar coordinates $(r,\theta)$ (with respect to the metric $\g_2$)
\begin{equation}\label{3.37}
\widetilde{a}_3(t,r,\theta;h)=\alpha_{\g_2}^{-1/4}(r,\theta)\int_0^r\alpha_{\g_2}^{1/4}(s,\theta)\widetilde{a}_2(s-r+t,s,\theta)
\widetilde{m}(s,\theta,h) \, \dd s,
\end{equation}
where $\alpha_{\g_2}(r,\theta)$ denotes the square of the volume element in geodesic polar coordinates with respect to the metric $\g_2$.
Using that $\norm{m(\cdot,h)}_{\mathcal{C}^2}\leq Ch^{-2}  \norm{\mathfrak{s}}_{\mathcal{C}^2} $ and (\ref{3.37})
we obtain (\ref{3.36}).
\begin{lemma}\label{L3.3} Let  $\g_\ell\in \mathcal{C}^2(\M)$ such that $\norm{\g_\ell}_{\mathcal{C}^k(\M)}\leq M_0$, $\ell=1,2$. Let $a_2,a_3\in\mathcal{A}(Q)$ solve respectively (\ref{3.32})-(\ref{3.34}) and $\ek_1,\ek_2$ satisfy (\ref{3.31}) and let $T>\textrm{Diam}\,\M$. Then for all $h>0$ small enough, there exists a solution 
$$
u_2(t,x;h)\in \mathcal{C}^2([0,T];L^2(\M))\cap\mathcal{C}^1([0,T];H^1(\M))\cap\mathcal{C}([0,T];H^2(\M))
$$ of the wave equation
\begin{equation*}
(\p^2_t-\Delta_{\g_2})u_2=0,\quad \textrm{in}\quad Q:=(0,T)\times\M,
\end{equation*}
with the initial condition
\begin{equation*}
u_2(0,x)=\p_tu_2(0,x)=0,\quad \textrm{in}\quad \M,
\end{equation*}
 of the form
\begin{equation}\label{3.38}
u_2(t,x;h)=ha_2(t,x)e^{i(\ek_1(x)-t)/h}+ a_3(t,x)e^{i(\ek_2(x)-t)/h}+r_{2,h}(t,x),
\end{equation}
the remainder $r_{2,h}(t,x)$ is such that
\begin{align*}
r_{2,h}(t,x)&=0,\quad (t,x)\in \Sigma , \\
r_{2,h}(0,x)=\p_tr_{2,h}(0,x)&=0,\quad x\in \M.
\end{align*}
Furthermore, there exist $C>0$, $h_0>0$ such that, for all $h \leq h_0$ the following estimates hold true.
\begin{equation}\label{3.39}
\sum_{k=0}^2 \sum_{j=0}^{k} h^{k-1} \|\p_t^{j} r_{2,h}(t,\cdot)\|_{H^{k-j}(\M)}\leq C(h+h^{-2}\norm{\mathfrak{s}}_{\mathcal{C}^2})\nr(a_2).
\end{equation}
The constant $C$ depends only on $T$, $\M$ and $M_0$ (that is $C$ does
not depend on $a$ and $h$ and $\varepsilon$). 
\end{lemma}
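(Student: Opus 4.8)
The plan is to follow the proof of Lemma \ref{L3.2} essentially verbatim, the only structural changes being that the Cauchy data are now prescribed at $t=0$ (so that we invoke the forward estimate of Lemma \ref{L3.1} rather than its backward analogue) and that the ansatz now carries two phases. Writing $E_\ell=e^{i(\ek_\ell(x)-t)/h}$, $\ell=1,2$, I set
\begin{equation*}
z_{2,h}(t,x)=-\para{\p_t^2-\Delta_{\g_2}}\para{h\,a_2(t,x)E_1+a_3(t,x)E_2},
\end{equation*}
and define $r_{2,h}$ to be the solution of the forward problem (\ref{3.3}) with right-hand side $z_{2,h}$, vanishing Cauchy data at $t=0$, and homogeneous Dirichlet condition on $\Sigma$. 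By construction $u_2=h\,a_2E_1+a_3E_2+r_{2,h}$ solves $(\p_t^2-\Delta_{\g_2})u_2=0$. Both $a_2$ and $a_3$ vanish for $t\leq\delta$ (this is built into $a_2$ through the cutoff $\kappa_1$ and propagates to $a_3$ through the representation (\ref{3.37})); hence the two leading terms and $z_{2,h}$ vanish near $t=0$, so $u_2$ has the required zero initial data and Lemma \ref{L3.1} applies to $r_{2,h}$. It then remains to establish (\ref{3.39}).

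The decisive step is the cancellation of the singular powers of $h$ in $z_{2,h}$. Expanding $(\p_t^2-\Delta_{\g_2})(aE_\ell)$ and using $\abs{\nabla_{\g_2}\ek_2}_{\g_2}^2=1$ from (\ref{3.31}), the $a_3E_2$ term produces no $h^{-2}$ contribution, while the $h\,a_2E_1$ term carries the prefactor $h$, so its most singular part is of order $h^{-1}$, equal to $\frac1h\para{\abs{\nabla_{\g_2}\ek_1}_{\g_2}^2-1}a_2E_1$. Because $\ek_1$ solves the eikonal equation for $\g_1$,
\begin{equation*}
\abs{\nabla_{\g_2}\ek_1}_{\g_2}^2-1=\sum_{j,k=1}^n\para{\g_2^{jk}-\g_1^{jk}}\p_j\ek_1\,\p_k\ek_1=\sum_{j,k=1}^n\mathrm{s}_{jk}\,\p_j\ek_1\,\p_k\ek_1.
\end{equation*}
On the other hand, the $h^{-1}$ part of the $a_3E_2$ term is $-\frac{2i}{h}\mathcal{L}_{\g_2,\ek_2}a_3\,E_2$, which by the transport equation (\ref{3.34}) equals $-\frac{2i}{h}a_2\,m(\cdot,h)\,E_2$; inserting the definition (\ref{3.35}) of $m$ and using $e^{i(\ek_1-\ek_2)/h}E_2=E_1$ rewrites it as $-\frac1h a_2\sum_{j,k}\mathrm{s}_{jk}\p_j\ek_1\p_k\ek_1\,E_1$, which is precisely the negative of the $h^{-1}$ term above. \emph{This cancellation, which is exactly what the choice (\ref{3.35}) of $m$ and the second transport equation (\ref{3.34}) are engineered to produce, is the heart of the construction.}

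After these cancellations $z_{2,h}$ reduces to three non-singular terms,
\begin{equation*}
z_{2,h}=2i\para{\mathcal{L}_{\g_2,\ek_1}-\mathcal{L}_{\g_1,\ek_1}}a_2\,E_1-h\para{\p_t^2-\Delta_{\g_2}}a_2\,E_1-\para{\p_t^2-\Delta_{\g_2}}a_3\,E_2,
\end{equation*}
whose $L^2(Q)$ norms are bounded respectively by $C\norm{\mathfrak{s}}_{\mathcal{C}^1}\nr(a_2)$ (the two transport operators differ only through $\g_2-\g_1$, and $\mathcal{L}_{\g_1,\ek_1}a_2=0$ by (\ref{3.32})), by $Ch\,\nr(a_2)$, and by $C\,\nr(a_3)\leq Ch^{-2}\norm{\mathfrak{s}}_{\mathcal{C}^2}\nr(a_2)$ by (\ref{3.36}). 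Since every $\p_t$ falling on $E_\ell$ costs a factor $h^{-1}$, the same three terms yield
\begin{equation*}
\norm{z_{2,h}}_{L^2(Q)}+h\norm{\p_t z_{2,h}}_{L^2(Q)}\leq C\para{h+h^{-2}\norm{\mathfrak{s}}_{\mathcal{C}^2}}\nr(a_2).
\end{equation*}

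To finish, I would argue as in Lemma \ref{L3.2}. Setting $\breve r_{2,h}(t,x)=\int_0^t r_{2,h}(s,x)\,\dd s$, which solves the same type of problem as (\ref{3.12}) with source $\breve z_{2,h}(t,x)=\int_0^t z_{2,h}(s,x)\,\dd s$, one integration by parts in $s$ — using $E_\ell=ih\,\p_s E_\ell$ and the vanishing of the amplitudes at $s=0$ to discard the lower boundary term — gains one power of $h$, giving $\norm{\breve z_{2,h}}_{L^2(Q)}\leq Ch\para{h+h^{-2}\norm{\mathfrak{s}}_{\mathcal{C}^2}}\nr(a_2)$. The energy estimate (\ref{3.4}) applied to $\breve r_{2,h}$ then gives the $k=0$ part, $h^{-1}\norm{r_{2,h}(t,\cdot)}_{L^2(\M)}=h^{-1}\norm{\p_t\breve r_{2,h}(t,\cdot)}_{L^2(\M)}\leq C\para{h+h^{-2}\norm{\mathfrak{s}}_{\mathcal{C}^2}}\nr(a_2)$, while the $k=1,2$ parts follow by applying (\ref{3.4}) and (\ref{3.5}) directly to $r_{2,h}$ together with the bound on $\norm{z_{2,h}}_{L^2}+h\norm{\p_t z_{2,h}}_{L^2}$ above. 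Collecting these yields (\ref{3.39}). The main obstacle is getting the order $h^{-1}$ cancellation exactly right — verifying that applying $\Delta_{\g_2}$ to the \emph{wrong-metric} phase $\ek_1$ generates precisely the term the source $a_2\,m$ is designed to annihilate — after which the only remaining care is to check that the large secondary amplitude $a_3$, of size $h^{-2}\norm{\mathfrak{s}}_{\mathcal{C}^2}$, still lands inside the asserted bound $\para{h+h^{-2}\norm{\mathfrak{s}}_{\mathcal{C}^2}}\nr(a_2)$.
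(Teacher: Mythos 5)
Your proposal is correct and follows essentially the same route as the paper's proof: the same source $z_{2,h}$ and remainder problem, the same order-$h^{-2}$ and order-$h^{-1}$ cancellations via the eikonal equations \eqref{3.31} and the transport equation \eqref{3.34} with the source \eqref{3.35}, the same residual decomposition into the three non-singular terms ($k_0$, $k_1$, $k_2$ in the paper's notation, with $k_2$ controlled through \eqref{3.36}), and the same antiderivative/integration-by-parts trick followed by the energy estimates \eqref{3.4}--\eqref{3.5} to obtain \eqref{3.39}. Your sign bookkeeping in the expansion of $(\p_t^2-\Delta_{\g_2})(a e^{i(\ek_\ell-t)/h})$ is in fact the accurate one (the paper's \eqref{3.43} carries a consistent sign slip that does not affect the cancellation or the norm bounds), and your explicit verification that $a_2$, and hence $a_3$ via \eqref{3.37}, vanish near $t=0$ supplies the hypothesis $z(0,\cdot)=0$ needed for Lemma \ref{L3.1}, which the paper leaves implicit.
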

\begin{proof}
We set
\begin{align}\label{3.40}
z_{2,h}(t,x)=&-\para{\partial_t^2-\Delta_{\g_2}}\big(h a_2(t,x)e^{i(\ek_1- t)/h} + a_3(t,x,h)e^{i(\ek_2- t)/h}\big).
\end{align}
To prove our Lemma it would be enough to show that if $r_{2,h}$ solves
\begin{equation}\label{3.41}
\para{\partial_t^2-\Delta_{\g_2}}r_{2,h}=z_{2,h}(t,x)
\end{equation}
with final and boundary conditions
\begin{equation}\label{3.42}
r_{2,h}(0,x)=\p_tr_{2,h}(0,x)=0, \textrm{ in }\M,\quad \textrm{and}\quad r_{2,h}(t,x)=0 \textrm{ on }\,\Sigma,
\end{equation}
then the estimates (\ref{3.39}) holds. We have
\begin{align}\label{3.43}
-z_{2,h}(t,x)=&h e^{i(\ek_1-t)/h}\para{\p_t^2-\Delta_{\g_2}}a_2+2ie^{i(\ek_1-t)/h}\Big(\p_ta_2+
\sum_{j,k=1}^n \g_2^{jk}\p_j\ek_1\p_k a_2+\frac{a_2}{2}\Delta_{\g_2}\ek_1\Big)\cr
&\quad +h^{-1}a_2e^{i(\ek_1-t)/h}\Big(1-\sum_{j,k=1}^n\g_2^{jk}\p_j\ek_1\p_k\ek_1\Big)
+e^{i(\ek_2-t)/h}\para{\p_t^2-\Delta_{\g_2}}a_3\cr
&\quad +2ih^{-1} e^{i(\ek_2- t)/h}\mathcal{L}_{\g_2,\ek_2}a_3(t,x)
+h^{-2}a_3 e^{i(\ek_2-t)/h}\Big(1-\abs{\nabla_{\g_2}\ek_2}^2_{\g_2}\Big).
\end{align}
Taking into account (\ref{3.31}) and (\ref{3.31}), the right-hand side of (\ref{3.43}) becomes
\begin{align}\label{3.44}
-z_{2,h}(t,x)=&e^{i(\ek_1-t)/h}\Big[\big(\p_t^2-\Delta_{\g_2}\big) a_2+2i\Big(\sum_{j,k=1}^n \mathrm{s}_{jk}\p_j\ek_1\p_k a_2+\frac{a_2}{2}\big(\Delta_{\g_2}\ek_1-\Delta_{\g_1}\ek_1\Big)\Big]\cr
&\quad +2ih^{-1}e^{i(\ek_2- t)/h}\Big[\mathcal{L}_{\g_2,\ek_2}a_3(t,x)
-a_2(t,x)m(x,h)\Big] \cr
&\quad +e^{i(\ek_2-t)/h}\para{\p_t^2-\Delta_{\g_2}}a_3.
\end{align}
By (\ref{3.34}) we get
\begin{multline}\label{3.45}
-z_{2,h}(t,x)=he^{i(\ek_1-t)/h}\big(\p_t^2-\Delta_{\g_2}\big) a_2\cr
 +2ie^{i\lambda(\ek_1- t)}\Big[\sum_{j,k=1}^n \mathrm{s}_{jk}\p_j\ek_1\p_k a_2+\frac{a_2}{2}\big(\Delta_{\g_2}\ek_1-\Delta_{\g_1}\ek_1\big)\Big]+
e^{i(\ek_2-t)/h}\para{\p_t^2-\Delta_{\g_2}}a_3\cr
\equiv \Big[e^{i\ek_1/h}(hk_0+k_1)+e^{i\ek_2/h}k_2\Big]e^{-it/h}=z_{0,h}(t,x)e^{-it/h},
\end{multline}
where
$$
k_0=\big(\p_t^2-\Delta_{\g_2}\big) a_2,\quad k_1=\sum_{j,k=1}^n \mathrm{s}_{jk}\p_j\ek_1\p_k a_2
+\frac{a_2}{2}\big(\Delta_{\g_2}\ek_1-\Delta_{\g_1}\ek_1\big),\quad k_2=\para{\p_t^2-\Delta_{\g_2}}a_3.
$$
Since $k_j\in H_0^1(0,T;L^2(\M))$, $j=0,1,2$, by Lemma \ref{L3.1}, we deduce that
\begin{equation}\label{3.46}
r_{2,h}\in \mathcal{C}^2([0,T];L^2(\M))\cap\mathcal{C}^1([0,T]; H^1_0(\M))\cap\mathcal{C}([0,T];H^2(\M)).
\end{equation}
Further, the function
\begin{equation}\label{3.47}
\breve{r}_{2,h}=\int_0^{t} r_{2,h}(s,x)ds
\end{equation}
solves the mixed problem (\ref{3.41})-(\ref{3.42}), with the right hand side
\begin{equation}\label{3.48}
\breve{z}_{2,h}(t,x)=\int_0^{t} z_{2,h}(s,x)ds=-ih\int_0^{t} z_{0,h}(s,x)\p_se^{-is/h}ds.
\end{equation} 
Integrating by part with respect $s$, we obtain 
\begin{equation}\label{3.49}
\norm{\breve{z}_{2,h}}_{L^2(Q)}\leq C\para{h^2+h^{-1}\norm{\mathfrak{s}}_{\mathcal{C}^2}}\nr(a_2),
\end{equation}
and by the energy estimate (\ref{3.4}) for $\breve{r}_{2,h}$, we get
\begin{equation}\label{3.50}
\norm{r_{2,h}(t,\cdot)}_{L^2(\M)}=\norm{\p_t\breve{r}_{2,h}(t,\cdot)}_{L^2(\M)}\leq C\para{h^2+h^{-1}\norm{\mathfrak{s}}_{\mathcal{C}^2}}\nr(a_2).
\end{equation}
Moreover, we have
\begin{eqnarray}\label{3.51}
\norm{z_{2,h}}_{L^2(Q)}\leq C\para{h+ h^{-2}\norm{\mathfrak{s}}_{\mathcal{C}^2}}\nr(a_2),
\end{eqnarray}
and by using again the energy estimates (\ref{3.4}) for the problem (\ref{3.41})-(\ref{3.42}), we obtain
\begin{eqnarray}\label{3.52}
\norm{\nabla r_{2,h} (t,\cdot)}_{L^2(\M)}+\norm{\p_t r_{2,h}  (t,\cdot)}_{L^2(\M)}\leq C\para{h+h^{-2}\norm{\mathfrak{s}}_{\mathcal{C}^2}}\nr(a_2),
\end{eqnarray}
and by (\ref{3.5}), we get 
\begin{eqnarray}\label{3.53}
\norm{\p_t^2 r_{2,h}(t,\cdot)}_{L^2(\M)}+\norm{\p_t r_{2,h}(t,\cdot)}_{H^1(\M)}+\norm{ r_{2,h}(t,\cdot)}_{H^2(\M)}\leq C(1+h^{-3}\norm{\mathfrak{s}}_{\mathcal{C}^2})\nr(a_2).
\end{eqnarray}
This completes the proof.
\end{proof}
\subsection{Integral identity}

This section is devoted to the proof of some integral identity. We use the following notations; let
$\g_1,\g_2\in\mathcal{C}^k(\M)$, we recall that $\mathrm{s}_{jk}(x)=(\g_2^{jk}-\g_1^{jk})(x)$, and we denote
\begin{equation}\label{3.54}
 \alpha(x)=\frac{\sqrt{\abs{\g_2}}}{\sqrt{\abs{\g_1}}},\quad \beta(x)=\alpha(x)-1,\quad \varrho(x)=\alpha(x)\sum_{j,k=1}^n \mathrm{s}_{jk}(x)\p_j\ek_1\p_k \ek_1,
\end{equation}
and $\mathfrak{s}'=(\mathrm{s}'_{jk})_{jk}$, with
\begin{equation}\label{3.55}
\mathrm{s}'_{jk}(x)=\left( \alpha(x)\g_2^{jk}-\g_1^{jk}\right).
\end{equation}
Then the following holds
\begin{equation}\label{3.56}
\norm{\varrho}_{L^2(\M)}\leq C\norm{\mathfrak{s}}_{L^2(\M)},\quad 
 \norm{\beta}_{\mathcal{C}^0(\M)}\leq C\norm{\mathfrak{s}}_{\mathcal{C}^0(\M)}.
\end{equation}
Indeed, for the last inequality, since $\bar{\M}$ is compact, there exist $m_0>0$ such that $\abs{\g_j}\geq m_0$, $j=1,2$, then
$$
\abs{\beta(x)}\leq\frac{1}{2m_0}\Big|\abs{\g_2}-\abs{\g_1}\Big|\leq C\norm{\g_2-\g_1}_{\mathcal{C}^0(\M)},
$$
where $\abs{\g}$, denotes the determinant of $\g$, and
$$
C=\sup_{\rho\in[0,1]}\norm{D(\det)\para{(1-\rho)\g_2+\rho\g_1}}.
$$
Let now $\g_1,\g_2\in \mathcal{C}^k(\M)$ two metrics tensor such that $\g_1=\g_2$ in $\mathcal{O}$ and $T>T^*>\textrm{Diam}\,\M$. Let $u_1$, $u_2$ such that
$$
u_1\in \mathcal{C}^2([0,T^*];L^2(\M))\cap\mathcal{C}^1([0,T^*];H^1(\M))\cap\mathcal{C}([0,T^*];H^2(\M))
$$
$$
u_2\in \mathcal{C}^2([0,T];L^2(\M))\cap\mathcal{C}^1([0,T];H^1(\M))\cap\mathcal{C}([0,T];H^2(\M))
$$
and solve respectively the following boundary problems in $Q^*=(0,T^*)\times\M$ and $Q=(0,T)\times\M$
\begin{equation}\label{3.57}
\left\{
\begin{array}{llll}
(\partial_t^2-\Delta_{\g_1})u_1=0,  & \textrm{in }\; Q^*,\cr
u_1(T^*,\cdot )=\p_tu_1(T^*,\cdot )=0, & \textrm{in }\; \M,
\end{array}
\right.
\quad ; \quad 
\left\{
\begin{array}{llll}
(\partial^2_t-\Delta_{\g_2})u_2=0,  & \textrm{in }\; Q,\cr
u_2(0,\cdot )=\p_tu_2(0,\cdot )=0, & \textrm{in }\; \M.
\end{array}
\right.
\end{equation}
Let $u$ the unique solution of the following initial boundary value problem
\begin{equation}\label{3.58}
\left\{
\begin{array}{llll}
(\partial_t^2-\Delta_{\g_1})u=0,  & \textrm{in }\; Q,\cr
u(0,\cdot )=\p_tu(0,\cdot )=0, & \textrm{in }\; \M,\cr
u=u_2 & \textrm{on }\; \Sigma.
\end{array}
\right.
\end{equation}
We denote $w=u-u_2$. Then $w$ solves the initial boundary value problem
\begin{equation}\label{3.59}
\left\{
\begin{array}{llll}
(\partial_t^2-\Delta_{\g_1})w=-(\p_t^2-\Delta_{\g_1})u_2(t,x),  & \textrm{in }\; Q,\cr
w(0,\cdot )=\p_tw(0,\cdot )=0, & \textrm{in }\; \M,\cr
w=0 & \textrm{on }\; \Sigma,
\end{array}
\right.
\end{equation}
Let  $\varkappa\in\mathcal{C}_0^\infty(\M)$ with $\varkappa=1$ in $\M\backslash\mathcal{O}$, and consider $w_0=\varkappa w$. Then $w_0$ solves the following initial boundary value problem
\begin{equation}\label{3.60}
\left\{
\begin{array}{llll}
(\partial_t^2-\Delta_{\g_1})w_0=-(\p_t^2-\Delta_{\g_1})u_2(t,x)-[\Delta_{\g_1},\varkappa]w,  & \textrm{in }\; Q,\cr
w_0(0,\cdot )=\p_tw_0(0,\cdot )=0, & \textrm{in }\; \M,\cr
w_0=0 & \textrm{on }\; \Sigma,
\end{array}
\right.
\end{equation}
where we have used that $(\p_t^2-\Delta_{\g_1})u_2=(\p_t^2-\Delta_{\g_2})u_2=0$ in $\mathcal{O}$. We multiply both hand sides of the first equation (\ref{3.60}) by $\overline{u}_1$, integrate by parts in time and use Green's formula (\ref{2.5}) to get
\begin{align*}
0&=\int_{Q^*} \overline{u}_1(\p^2_t-\Delta_{\g_1}) w_0\dv_{\g_1} \, \dd t  \cr
&=-\int_{Q^*} \overline{u}_1(\p^2_t-\Delta_{\g_1})u_2\dv_{\g_1} \, \dd t-\int_{Q^*} \overline{u}_1[\Delta_{\g_1},\varkappa]w\dv_{\g_1} \, \dd t \cr
&=-\int_{Q^*} \overline{u}_1 [\Delta_{\g_1},\varkappa]w\dv_{\g_1} \, \dd t +\int_{Q^*} \p_tu_2\p_t\overline{u}_1 \dv_{\g_1} \, \dd t\cr
&\quad+\int_{Q^*} \overline{u}_1\Delta_{\g_2}u_2\dv_{\g_2}dt +\int_{Q^*} \Big(\sum_{j,k=1}^n \mathrm{s}'_{jk}\p_j u_2\p_k \overline{u}_1\Big)\dv_{\g_1} \, \dd t,
\end{align*}
and after using a second time Green's formula  (\ref{2.5}), we end up with the following integral identity
\begin{align}\label{3.61}
\int_{Q^*} u_1 [\Delta_{\g_1},\varkappa]\overline{w}\dv_{\g_1} \, \dd t &=-\int_{Q^*} \beta(x) \p_tu_1\p_t\overline{u}_2 \dv_{\g_1} \, \dd t+\int_{Q^*} \Big(\sum_{j,k=1}^n \mathrm{s}'_{jk}\p_j u_1\p_k \overline{u}_2\Big)\dv_{\g_1} \, \dd t.
\end{align}
Next, taking inspiration to the analysis carried out in \cite{Bell-Benfraj} we obtain a stability estimate in the unique continuation of solution of the  wave equation from lateral boundary data on an arbitrary non-empty relatively open subset $\Gamma^\natural$ of $\Gamma$. We shall use the following notations. Let $\mathcal{O}\subset\M$ be a given smooth neighborhood of the boundary $\Gamma$, we consider then three open subset $\mathcal{O}_j$, $j=1,2,3$ of $\mathcal{O}$, such that
\begin{equation}\label{3.62}
\bar{\mathcal{O}}_{j+1}\subset \mathcal{O}_j,\quad \Gamma\subset\p\mathcal{O}_j,
\end{equation} 
and we select the cut-off function $\varkappa$ as in (\ref{3.61}) such that
\begin{equation}\label{3.63}
\varkappa(x)=0,\quad x\in\mathcal{O}_3,\quad \varkappa(x)=1, \quad x\in\bar{\M}\backslash\mathcal{O}_2.
\end{equation}
We have the following Lemma.
\begin{lemma}\label{L3.4}
Let $T$ be sufficiently large. Then there exist positive constants $C,\; T_*\in (\textrm{Diam}\,\Omega,T), \; \mu$ and $\gamma_*$ such that the estimate 
\begin{equation}\label{3.64}
\|w\|_{L^2((0,T^*)\times(\mathcal{O}_2\backslash\mathcal{O}_3))}\leq C {\gamma^{-\frac{1}{2}}}\|w\|_{H^1(Q)}+e^{\mu \gamma}(\|(\p_t^2-\Delta_{\g_1})w\|_{L^2((0,T)\times\mathcal{O})}+\|\partial_\nu w\|_{L^2(\Sigma^\natural)}).
\end{equation}
holds for any $\gamma>\gamma_*$ and any $w \in H^2(Q)$ such that $w=0$ on $\Sigma$.
\end{lemma}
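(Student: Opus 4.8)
The plan is to obtain \eqref{3.64} as a consequence of a Carleman estimate for the wave operator $P_1=\p_t^2-\Delta_{\g_1}$ carrying a large parameter $\gamma$, following the analysis of \cite{Bell-Benfraj}; the parameter in the inequality is precisely the Carleman parameter, and the two competing factors $\gamma^{-1/2}$ and $e^{\mu\gamma}$ will fall out of the weighted energy balance. First I would fix a weight $\phi(t,x)=e^{\lambda\psi(x)}-\beta(t-T/2)^2$, with $\psi$ built from the Euclidean distance to a point $x_0\notin\bar\M$ placed in the $+e_n$ direction (legitimate since $\g_1=e$ near $\Gamma$ and $\bar\M$ is strictly convex). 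Then $\p_\nu\phi$ has the favorable sign on the part of $\Gamma$ facing $x_0$, which, because $\Gamma^\natural\subset\Gamma_-=\set{\seq{\nu,e_n}<0}$, can be arranged to contain $\Gamma\setminus\Gamma^\natural$. The constants $\lambda$ large and $\beta\in(0,1)$ are tuned so that $\phi$ is pseudoconvex with respect to $P_1$; this forces $T$ (hence $T^*>\textrm{Diam}\,\M$) to be large and uses the smallness $\norm{\g_1-e}_{\mathcal{C}^k}\leq\varepsilon$, the condition being a small perturbation of the classical constant-coefficient one.

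Since the data at our disposal are only $\norm{P_1w}_{L^2((0,T)\times\mathcal{O})}$ and $\norm{\p_\nu w}_{L^2(\Sigma^\natural)}$, the second step is to localize. I introduce a spatial cut-off $\zeta$ supported in $\mathcal{O}$ and equal to $1$ on a sub-collar still containing $\mathcal{O}_2\setminus\mathcal{O}_3$, and a temporal cut-off $\chi$ equal to $1$ on $(0,T^*)$ and vanishing near the ends of $(0,T)$ where $\phi$ is smallest, then apply the Carleman estimate to $\zeta\chi w$. On the right-hand side this produces $\zeta\chi P_1w$, controlled by the collar data; the commutators $[\Delta_{\g_1},\zeta]w$ and $[\p_t^2,\chi]w$, supported where $\nabla\zeta\neq0$ or $\chi'\neq0$ and hence bounded by $\norm{w}_{H^1(Q)}$; and the lateral boundary integral. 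Because $w=0$ on $\Sigma$, the boundary integral reduces to $\gamma\int_\Sigma e^{2\gamma\phi}\,\p_\nu\phi\,\abs{\p_\nu w}^2$, whose contribution over $\set{\p_\nu\phi\leq0}\supset\Gamma\setminus\Gamma^\natural$ has the good sign and is discarded, leaving only an integral over $\Sigma^\natural$.

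The third step is the bookkeeping of weight levels. On the target $(0,T^*)\times(\mathcal{O}_2\setminus\mathcal{O}_3)$ the weight is bounded below by $e^{2\gamma A}$; the supports of $\nabla\zeta$, of $\chi'$ and of the discarded boundary pieces are arranged to sit at a strictly lower level $e^{2\gamma b}$ with $b<A$, while $\Sigma^\natural$ sits at a higher level $e^{2\gamma B}$ with $B>A$. Dividing the Carleman inequality by $\gamma\,e^{2\gamma A}$, taking square roots and writing $\mu=B-A$, the commutator and time-boundary terms give the factor $\gamma^{-1/2}\norm{w}_{H^1(Q)}$, whereas the collar-source and $\Sigma^\natural$ terms give $e^{\mu\gamma}\big(\norm{P_1w}_{L^2((0,T)\times\mathcal{O})}+\norm{\p_\nu w}_{L^2(\Sigma^\natural)}\big)$, which is exactly \eqref{3.64}.

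The main obstacle is the weight itself. A single radial phase makes $\set{\p_\nu\phi>0}$ a whole cap of $\Gamma$, so pushing the bad part of the boundary integral inside an arbitrarily small $\Gamma^\natural$ cannot be done with one quadratic phase: the estimate must genuinely exploit the finite speed of propagation for $P_1$ together with the full Dirichlet condition $w=0$ on $\Sigma$, sliding the region of good sign along $\Gamma$ through a finite chain of overlapping pseudoconvex weights (as in \cite{Bell-Benfraj}), so that the whole shell $\mathcal{O}_2\setminus\mathcal{O}_3$ is reached while only $\Sigma^\natural$ is ever observed. Checking that such a family of weights can be kept pseudoconvex for $P_1$ under $\norm{\g_1-e}_{\mathcal{C}^k}\leq\varepsilon$, for $T$ large, and with the ordering $b<A<B$ preserved at each stage, is the delicate point; once the weights are in hand, the Carleman estimate and the bookkeeping above are routine.
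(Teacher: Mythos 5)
Your strategy is not the paper's, and the difference is not cosmetic: the step you yourself flag as ``the delicate point'' is in fact an insurmountable obstruction, so the proposal has a genuine gap. The paper proves Lemma \ref{L3.4} by combining the Carleman analysis of \cite{BCY} with the Fourier--Bros--Iagolnitzer transform in the \emph{time} variable: up to controllable errors, the FBI transform of $\p_t w$ satisfies an \emph{elliptic} equation in the collar $\mathcal{O}$, and one then applies an elliptic Carleman estimate, for which no geometric condition whatsoever is imposed on the observation region; the sliding of smallness along the boundary from the small set $\Gamma^\natural$ to the whole shell $\mathcal{O}_2\backslash\mathcal{O}_3$ is performed at the elliptic level, where chaining weights is unobstructed. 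Your plan stays purely hyperbolic, and the chain of weights you postulate --- each strongly pseudoconvex for $\p_t^2-\Delta_{\g_1}$, with $\p_\nu\phi\leq 0$ on all of the unobserved boundary at every stage --- cannot exist. If it did, your own bookkeeping (cut-off commutators at a strictly lower weight level $b<A$) would yield an estimate with $e^{-c\gamma}\norm{w}_{H^1(Q)}$ in place of $\gamma^{-1/2}\norm{w}_{H^1(Q)}$, hence, after optimizing in $\gamma$, H\"older-type stability for continuation from an arbitrary small $\Sigma^\natural$. That is false when $\Gamma^\natural$ violates the geometric control condition: Gaussian-beam quasimodes concentrated along a broken bicharacteristic that never meets $\bar{\Gamma}^\natural$ have energy of order one in the shell, vanishing Dirichlet trace, and negligible Neumann data on $\Sigma^\natural$ together with negligible source in $\mathcal{O}$, so only logarithmic-in-the-data bounds of the form \eqref{3.64} can hold. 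The microlocal reason is that strong pseudoconvexity for the wave operator fails precisely at the grazing null rays you would have to cross in order to propagate \emph{laterally} along the time-like boundary; this failure is what forces the analytic-in-time machinery (FBI), and it is also how \cite{Bell-Benfraj}, which you cite as the model for the chain, actually proceeds --- the overlapping-weights argument there is elliptic, after the FBI reduction, not hyperbolic.

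A confirming symptom is the factor $\gamma^{-1/2}$ itself, which your scheme cannot produce and which you insert by fiat (``the commutator and time-boundary terms give the factor $\gamma^{-1/2}\norm{w}_{H^1(Q)}$''): with all bad terms at strictly lower weight levels, a hyperbolic Carleman argument gives exponential, not algebraic, damping. In the paper's route the $\gamma^{-1/2}$ has a concrete origin: it is the Gaussian-regularization error of the FBI transform, of the type $\norm{w-\mathscr{F}_\gamma w}_{L^2}\leq C\gamma^{-1/2}\norm{w}_{H^1(Q)}$, incurred when returning from the transformed (elliptic) problem to $w$, while $e^{\mu\gamma}$ comes from the constants in the elliptic Carleman estimate applied on $\mathcal{O}$ with data $\norm{(\p_t^2-\Delta_{\g_1})w}_{L^2((0,T)\times\mathcal{O})}$ and $\norm{\p_\nu w}_{L^2(\Sigma^\natural)}$. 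So the missing idea in your proposal is exactly the reduction to an elliptic problem near the boundary; without it, no choice of hyperbolic weights, however many and however cleverly overlapped, reaches an arbitrarily small $\Gamma^\natural$.
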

The key idea of the proof of Lemma \ref{L3.4} is to combine the analysis carried out in \cite{BCY}, which is based on a Carleman estimate specifically designed for the system under consideration, with the Fourier-Bros-Iagolnitzer (FBI) transformation. Indeed we take advantage of the fact that the FBI transform of the time derivative of the solution $w$ satisfies elliptic equation in the vicinity of the boundary in order to apply a Carleman elliptic estimate where no geometric condition is imposed on the control domain.
\section{Recovery of symmetric $2$-tensor}
\setcounter{equation}{0}
\begin{lemma}\label{L4.1}
There exists $C>0$ and $T^*\in (\textrm{Diam}\,\M,T)$ such that for any $a_1$, $a_2\in\mathcal{A}(Q)$ satisfying the transport equation \eqref{3.32} with \eqref{3.9} the
following estimate holds true
\begin{multline}\label{4.1}
\abs{\int_0^{T^*}\!\!\!\int_\M \varrho(x)(a_1\overline{a}_2)(t,x)\dv_{\g_1} \, \dd t}\leq C\Big[\norm{\mathfrak{s}}_{\mathcal{C}^2}
\para{h+h^{-3}\norm{\mathfrak{s}}_{\mathcal{C}^2}}\cr
+ \gamma^{-\frac{1}{2}}(h+h^{-2}\norm{\mathfrak{s}}_{\mathcal{C}^2})+e^{\mu \gamma}h^{-3} \| \Lambda_{\g_1}^{\natural}-\Lambda_{\g_2}^{\natural}\rVert \Big]\nr(a_1)\nr(a_2).
\end{multline}
for any sufficiently large $\gamma$ and sufficiently small $h$.
\end{lemma}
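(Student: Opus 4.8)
The plan is to substitute the two geometric optics solutions — the backward solution $u_1$ of Lemma \ref{L3.2} and the forward solution $u_2$ of Lemma \ref{L3.3} — into the integral identity \eqref{3.61}, and then multiply the whole identity by $h$. The guiding principle is that the first component $h a_2 e^{i(\ek_1-t)/h}$ of $u_2$ carries exactly the same phase $\ek_1$ as $u_1$, so the product of these two pieces is non-oscillating and, after multiplication by $h$, reproduces the quantity $\int_0^{T^*}\!\int_\M\varrho\,a_1\overline a_2$ we wish to control; every other pairing either oscillates or is of lower order in $h$, and will be pushed into the error terms.

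The first step is to isolate this main term. Writing $\p_t u_1=-\tfrac{i}{h}a_1 e^{i(\ek_1-t)/h}+O(1)$ and likewise for the spatial derivatives, the matched-phase contributions of the two integrals on the right of \eqref{3.61} are, to leading order in $1/h$,
\begin{equation*}
-\frac1h\int_{Q^*}\beta\,a_1\overline a_2\dv_{\g_1}\,\dd t+\frac1h\int_{Q^*}\Big(\sum_{j,k=1}^n\mathrm{s}'_{jk}\p_j\ek_1\p_k\ek_1\Big)a_1\overline a_2\dv_{\g_1}\,\dd t .
\end{equation*}
Using the eikonal equation \eqref{3.7} and the definitions \eqref{3.54}–\eqref{3.55}, one verifies the pointwise identity $\sum_{j,k}\mathrm{s}'_{jk}\p_j\ek_1\p_k\ek_1=\varrho+\beta$, so the two $\beta$-terms cancel and the leading part equals $\frac1h\int_{Q^*}\varrho\,a_1\overline a_2$. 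Hence $h$ times \eqref{3.61} yields exactly $\int_0^{T^*}\!\int_\M\varrho\,a_1\overline a_2$ together with remainders.

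The remainders split into two families. Family (A) gathers the right-hand side of \eqref{3.61}: the subleading matched-phase terms, where a derivative lands on an amplitude rather than on the phase (costing one power of $h$ while keeping a factor $\mathrm{s}'$ or $\beta$, hence $O(h\norm{\mathfrak{s}}_{\mathcal{C}^2})$), the terms involving $r_{1,h}$ or $r_{2,h}$ (controlled by \eqref{3.11} and \eqref{3.39}), and the cross terms pairing $a_1e^{i\ek_1/h}$ with $a_3e^{i\ek_2/h}$ (controlled through \eqref{3.36}); together these are bounded by $C\norm{\mathfrak{s}}_{\mathcal{C}^2}(h+h^{-3}\norm{\mathfrak{s}}_{\mathcal{C}^2})\nr(a_1)\nr(a_2)$. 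Family (B) is the left-hand term $h\int_{Q^*}u_1[\Delta_{\g_1},\varkappa]\overline w$. Since by \eqref{3.63} the factor $\nabla\varkappa$ is supported in $\mathcal{O}_2\setminus\mathcal{O}_3$ and vanishes near $\Gamma$, and $w=0$ on $\Sigma$, an integration by parts moves the derivative off $w$ with no boundary contribution; Cauchy--Schwarz together with $\norm{u_1}_{H^1}\le Ch^{-1}\nr(a_1)$ then gives the bound $C\nr(a_1)\norm{w}_{L^2((0,T^*)\times(\mathcal{O}_2\setminus\mathcal{O}_3))}$, to which Lemma \ref{L3.4} applies.

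The decisive inputs in Family (B) are two structural facts about $w=u-u_2$. Because $\g_1=\g_2$ on $\mathcal{O}$, the source $(\p_t^2-\Delta_{\g_1})w=-(\Delta_{\g_2}-\Delta_{\g_1})u_2$ vanishes on $\mathcal{O}$, so the term $\norm{(\p_t^2-\Delta_{\g_1})w}_{L^2((0,T)\times\mathcal{O})}$ in \eqref{3.64} drops out; and because $u$ and $u_2$ share the boundary data $f=u_2|_\Sigma$, we have $\p_\nu w|_{\Sigma^\natural}=(\Lambda^\natural_{\g_1}-\Lambda^\natural_{\g_2})f$, which produces the term $e^{\mu\gamma}h^{-3}\norm{\Lambda^\natural_{\g_1}-\Lambda^\natural_{\g_2}}\nr(a_1)\nr(a_2)$ once $\norm{f}_{H^{1,1}(\Sigma)}\le Ch^{-3}\nr(a_2)$ is checked. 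The remaining $\gamma^{-1/2}\norm{w}_{H^1(Q)}$ is estimated from the energy inequalities \eqref{3.4}–\eqref{3.5} for \eqref{3.59}, giving $\norm{w}_{H^1(Q)}\le C(h+h^{-2}\norm{\mathfrak{s}}_{\mathcal{C}^2}+h^{-3}\norm{\mathfrak{s}}_{\mathcal{C}^2}^2)\nr(a_2)$; the first two pieces yield the announced $\gamma^{-1/2}(h+h^{-2}\norm{\mathfrak{s}}_{\mathcal{C}^2})$, while the last is absorbed — since $\gamma^{-1/2}\le1$ — into the Family (A) bound. I expect the main obstacle to be precisely this bookkeeping of powers of $h$ in $(\Delta_{\g_2}-\Delta_{\g_1})u_2$: one must exploit that $a_3$ carries the oscillation $e^{i(\ek_1-\ek_2)/h}$, so that although $\nr(a_3)\le Ch^{-2}\norm{\mathfrak{s}}_{\mathcal{C}^2}\nr(a_2)$ its $L^2$ size is only $O(\norm{\mathfrak{s}}_{\mathcal{C}^2})$, and combine this with \eqref{3.39} so that every contribution falls into one of the three groups on the right of \eqref{4.1}. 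The eikonal cancellation of the $\beta$-terms and the two facts about $w$ are the conceptual core; the rest is careful, if lengthy, estimation.
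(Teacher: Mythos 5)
Your proposal is correct and takes essentially the same route as the paper's own proof: you substitute the geometric optics solutions of Lemmas \ref{L3.2} and \ref{L3.3} into the identity \eqref{3.61}, extract the matched-phase leading term via the cancellation $\sum_{j,k}\mathrm{s}'_{jk}\p_j\ek_1\p_k\ek_1-\beta=\varrho$, push the mismatched-phase, amplitude-derivative and remainder contributions into the error families bounded through \eqref{3.11}, \eqref{3.36}, \eqref{3.39}, and control the commutator term by Lemma \ref{L3.4}, using exactly the paper's two structural facts that $(\p_t^2-\Delta_{\g_1})w$ vanishes on $\mathcal{O}$ and that $\p_\nu w|_{\Sigma^\natural}=(\Lambda^\natural_{\g_1}-\Lambda^\natural_{\g_2})f$. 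The only difference is cosmetic bookkeeping of powers of $h$: you absorb the factor $h$ against $\norm{u_1}_{L^2(0,T^*;H^1(\M))}\leq Ch^{-1}\nr(a_1)$ and compensate with a sharper trace bound exploiting that $a_3$ is $O(\norm{\mathfrak{s}}_{\mathcal{C}^2})$ in $L^2$, whereas the paper retains the factor $h$ and uses $\norm{f_h}_{H^{1,1}(\Sigma)}\leq Ch^{-4}\nr(a_2)$ together with \eqref{4.10}; both ledgers close to the same estimate \eqref{4.1}.
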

\begin{proof}
Let $T^*$ satisfying Lemma \ref{L3.4}. Following Lemma \ref{L3.3} let $u_2$ be a solution to the wave equation $(\p_t^2-\Delta_{\g_2})u_2=0$ in $Q$, with the initial data $u_2(0,\cdot)=\p_tu_2(0,\cdot)= 0$ in $\M$ of the form
$$
\overline{u}_2(t,x)=h\overline{a}_2( t,x)e^{-i(\ek_1- t)/h}+\overline{a}_3
( t,x;h)e^{-i(\ek_2- t)/h}+\overline{r}_{2,h}(t,x),
$$
where $r_{2,h}$ satisfies (\ref{3.39}) and $a_3$ satisfies (\ref{3.36}).
Thanks to Lemma \ref{L3.2} let $u_1$ be a solution to the wave equation $(\p_t^2-\Delta_{\g_1})u_1=0$ in $Q$, with the final data $u_1(T^*,\cdot)=\p_t u_1(T^*,\cdot)=0$ in $\M$ of the form
$$
u_1(t,x)=a_1( t,x)e^{i(\ek_1-t)/h}+r_{1,h}(t,x),
$$
where $r_{1,h}$ satisfies (\ref{3.11}). Then
\begin{align}\label{4.2}
\p_t\overline{u}_2(t,x)&= h\p_t\overline{a}_2( t,x)e^{-i(\ek_1- t)/h}+i
\overline{a}_2( t,x)e^{-i(\ek_1- t)/h}\cr
& \quad +\p_t\overline{a}_3( t,x;h)e^{-i(\ek_2- t)/h}+ih\overline{a}_3
( t,x,h)e^{-i(\ek_2- t)/h}
+\p_t\overline{r}_{2,h}(t,x)\cr
\p_tu_1(t,x)&=\p_ta_1(t,x)e^{i(\ek_1-t)/h}-ih^{-1}a_1(t,x)e^{i(\ek_1-t)/h}+\p_tr_{1,h}.
\end{align}
Let us compute the first term in the right hand side of (\ref{3.61}). We have
\begin{align*}
\nonumber \int_0^{T^*}\!\!\!\int_\M \beta(x) &\p_tu_1 \p_t\overline{u}_2 \dv_{\g_1} \, \dd t =h^{-1}\int_0^{T^*}\!\!\!\int_\M \beta(x) a_1\overline{a}_2\dv_{\g_1} \, \dd t \cr
&+h\int_0^{T^*}\!\!\!\int_\M \beta(x)\para{\p_ta_1\p_t\overline{a}_2}\dv_{\g_1} \, \dd t
-i\int_0^{T^*}\!\!\!\int_\M \beta(x) a_1\p_t\overline{a}_2 \dv_{\g_1} \, \dd t \cr
&+h\int_0^{T^*}\!\!\!\int_\M \beta(x)\p_t\overline{a}_2\p_tr_{1,h} e^{-i(\ek_1-t)/h}\dv_{\g_1} \, \dd t
+i\int_0^{T^*}\!\!\!\int_\M \beta(x)\para{\p_ta_1\overline{a}_2}\dv_{\g_1} \, \dd t  \cr
&+i \int_0^{T^*}\!\!\!\int_\M \beta(x)\overline{a}_2\p_tr_{1,h} e^{-i(\ek_1-t)/h}\dv_{\g_1} \, \dd t+\int_0^{T^*}\!\!\!\int_\M
\beta(x)\p_t\overline{a}_3\p_ta_{1}e^{i(\ek_1-\ek_2)/h}\dv_{\g_1} \, \dd t  \cr
&-ih^{-1}\int_0^{T^*}\!\!\!\int_\M \beta(x)\p_t\overline{a}_3a_{1}e^{i(\ek_1-\ek_2)/h}\dv_{\g_1} \,\dd t+\!\int_0^{T^*}\!\!\!\int_\M
\beta(x)\p_tr_{1,h}\p_t\overline{a}_3e^{-i(\ek_2-t)/h}\dv_{\g_1} \dd t  \cr
&+ih^{-1}\int_0^{T^*}\!\!\!\int_\M \beta(x)\overline{a}_3\p_ta_{1}e^{i(\ek_1-\ek_2)/h}\dv_{\g_1} \, \dd t+h^{-2}\int_0^{T^*}\!\!\!\int_\M
\beta(x)a_1\overline{a}_3e^{i(\ek_1-\ek_2)/h}\dv_{\g_1} \, \dd t  \cr
&+ih^{-1}\int_0^{T^*}\!\!\!\int_\M \beta(x)\p_tr_{1,h}\overline{a}_3e^{-i(\ek_2-t)/h}\dv_{\g_1} \, \dd t+\!\int_0^{T^*}\!\!\!\int_\M
\beta(x)\p_ta_1\p_t\overline{r}_{2,h}e^{i(\ek_1-t)/h} \dv_{\g_1} \dd t  \cr
&-ih^{-1}\int_0^{T^*}\!\!\!\int_\M \beta(x)a_1\p_t\overline{r}_{2,h}e^{i(\ek_1-t)/h} \dv_{\g_1} \, \dd t+\int_0^{T^*}\!\!\!\int_\M
\beta(x)\p_tr_{1,h}\p_t\overline{r}_{2,h}\dv_{\g_1} \, \dd t.
\end{align*}
Thus, we have from (\ref{3.11}), (\ref{3.36}), (\ref{3.39}) and (\ref{3.56}) the following identity
\begin{equation}\label{4.3}
\int_0^{T^*}\!\!\!\int_\M \beta(x)\p_tu_1\p_t\overline{u}_2\dv_{\g_1} \, \dd t=
h^{-1}\int_0^{T^*}\!\!\!\int_\M \beta(x)(a_1\overline{a}_2)(t,x)\dv_{\g_1} \, \dd t+\mathcal{J}_1(h),
\end{equation}
where
\begin{equation}\label{4.4}
\abs{\mathcal{J}_1(h)}\leq\norm{\mathfrak{s}}_{\mathcal{C}^2}\para{1+h^{-4}  \|\mathfrak{s}\|_{\mathcal{C}^2(\M)}}\nr(a_2)\nr(a_1).
\end{equation}
On the other hand, we have
\begin{align}
\nonumber
\p_j u_1 &= \p_j a_1 e^{i(\ek_1-t)/h}+
ih^{-1} \p_j \ek_1 a_1 e^{i(\ek_1- t)/h}+\p_j r_{1,h} \\
\label{4.5}
\p_k \overline{u}_2 &=h\p_k\overline{a}_2 e^{-i(\ek_1- t)/h}-i\overline{a}_2 \p_k\ek_1e^{-i(\ek_1- t)/h} \cr
&\quad -ih^{-1}\overline{a}_3\p_k\ek_2e^{-i(\ek_2- t)/h}+\p_k\overline{a}_3 e^{-i(\ek_2- t)/h}+\p_k\overline{r}_{2,h}.
\end{align}
and the second term in the right side of (\ref{3.61}) becomes
\begin{multline}\label{4.6}
\int_0^{T^*}\!\!\!\int_\M\seq{\mathrm{s}'(x)\nabla u_1(t,x),\nabla \overline{u}_2(t,x)}_0 \dv_{\g_1} dt
\\ =h^{-1}\int_0^{T^*}\!\!\!\int_\M \seq{\mathrm{s}'(x)\nabla \ek_1,\nabla \ek_1}_0  (a_1\overline{a}_2)(t,x)\dv_{\g_1} \, \dd t+\mathcal{J}_2(h)+\mathcal{J}_3(h)
\end{multline}
with
\begin{align*}
\mathcal{J}_2(h)&=h\int_0^{T^*}\!\!\!\int_\M\seq{\mathrm{s}'(x) \nabla a_1,\nabla\overline{a}_2}_0\dv_{\g_1}  \, \dd t
-i\int_0^{T^*}\!\!\!\int_\M\overline{a}_2\seq{\mathrm{s}'(x)\nabla a_1,\nabla\ek_1(x)}_0\dv_{\g_1} \, \dd t\cr
&\quad+i\int_0^{T^*}\!\!\!\int_\M  a_1\seq{\mathrm{s}'(x) \nabla\overline{a}_2, \nabla\ek_1}_0\dv_{\g_1} \dd t
+h\int_0^{T^*}\!\!\!\int_\M  e^{-i(\ek_1- t)/h}\seq{\mathrm{s}'(x) \nabla\overline{a}_2,\nabla r_{1,h}}_0\dv_{\g_1}  \dd t\cr
&\quad-i\int_0^{T^*}\!\!\!\int_\M \overline{a}_2 e^{-i(\ek_1- t)/h}\seq{\mathrm{s}'(x) \nabla r_{1,h},\nabla\ek_1}_0\dv_{\g_1}  \dd t
\end{align*}
and
\begin{align*}
&\mathcal{J}_3(h) =-ih^{-1}\int_0^{T^*}\!\!\!\int_\M \overline{a}_3 e^{i(\ek_1-\ek_2)/h}\seq{\mathrm{s}'\nabla a_1,\nabla\ek_2}_0\dv_{\g_1} \, \dd t\cr
&+\int_0^{T^*}\!\!\!\int_\M e^{i(\ek_1-\ek_2)/h}\seq{\mathrm{s}' \nabla a_1,\nabla\overline{a}_3}_0\dv_{\g_1} \, \dd t
+\int_0^{T^*}\!\!\!\int_\M  e^{i(\ek_1- t)/h}\seq{\mathrm{s}'\nabla a_1,\nabla\overline{r}_{2,h}}_0\dv_{\g_1} \, \dd t  \cr
&+\int_0^{T^*}\!\!\!\int_\M \seq{\mathrm{s}'\nabla r_{1,h}, \nabla \overline{r}_{2,h}}_0\dv_{\g_1} \, \dd t
+h^{-2}\int_0^{T^*}\!\!\!\int_\M  a_1\overline{a}_3 e^{i(\ek_1-\ek_2)/h}\seq{\mathrm{s}'\nabla\ek_1,\nabla\ek_2}_0\dv_{\g_1} \, \dd t\cr
&+ih^{-1}\int_0^{T^*}\!\!\!\int_\M  a_1e^{i(\ek_1-\ek_2)/h}\seq{\mathrm{s}'\nabla\overline{a}_3,\nabla\ek_1}_0\dv_{\g_1} \, \dd t
+ih^{-1}\int_0^{T^*}\!\!\!\int_\M  a_1e^{i(\ek_1- t)/h}\seq{\mathrm{s}' \nabla\ek_1,\nabla\overline{r}_2}_0 \dv_{\g_1} \, \dd t \cr
&-ih^{-1}\int_0^{T^*}\!\!\!\int_\M \overline{a}_3e^{-i(\ek_2- t)/h}\seq{\mathrm{s}'\nabla r_{1,h},\nabla\ek_2}_0\dv_{\g_1} \, \dd t
+\int_0^{T^*}\!\!\!\int_\M  e^{-i(\ek_2- t)/h}\seq{\mathrm{s}'\nabla r_{1,h},\nabla \overline{a}_3}_0\dv_{\g_1} \, \dd t.
\end{align*}
From (\ref{3.56}), (\ref{3.39}) and (\ref{3.11}), we have
\begin{equation}\label{4.7}
\abs{\mathcal{J}_2(h)}+\abs{\mathcal{J}_3(h)}
\leq \norm{\mathfrak{s}}_{\mathcal{C}^2}\para{1+h^{-4}  \|\mathfrak{s}\|_{\mathcal{C}^2(\M)}}\nr(a_2)\nr(a_1).
\end{equation}
Taking into account (\ref{4.3}), (\ref{4.6}) and (\ref{3.61}), we deduce that
\begin{equation}\label{4.8}
\int_{Q^*} u_1 [\Delta_{\g_1},\varkappa]\overline{w}\dv_{\g_1} \, \dd t =
h^{-1}\int_0^{T^*}\!\!\!\int_\M \varrho(x)(a_1\overline{a}_2)(t,x)\dv_{\g_1} \dd t+\mathcal{J}_1(h)+
\mathcal{J}_2(h)+\mathcal{J}_3(h),
\end{equation}
where we have used that
$$
\varrho=-\beta(x)+\sum_{j,k=1}^n\textrm{s}'_{jk}\p_j\ek_1\p_k\ek_1=\alpha(x)\sum_{j,k=1}^n\textrm{s}_{jk}\p_j\ek_1\p_k\ek_1.
$$
In view of (\ref{4.7}) and (\ref{4.4}), we obtain
\begin{align}\label{4.9}
\abs{\int_0^{T^*}\!\!\!\int_\M &\varrho(x)(a_1\overline{a}_2)(t,x)\dv_{\g_1} \dd t} \cr
&\leq C\Big(\norm{\mathfrak{s}}_{\mathcal{C}^2}
\para{h+ h^{-3} \norm{\mathfrak{s}}_{\mathcal{C}^2}}\nr(a_1)\nr(a_2)+h\|w\|_{L^2((0,T^*)\times(\mathcal{O}_2\backslash\mathcal{O}_3))}\norm{u_1}_{L^2(0,T^*;H^1(\M))}\Big)\cr
&\leq C\Big[\norm{\mathfrak{s}}_{\mathcal{C}^2}
\para{h+h^{-3}\norm{\mathfrak{s}}_{\mathcal{C}^2}}\nr(a_1)\nr(a_2)+ h\para{\gamma^{-\frac{1}{2}}\|w\|_{H^1(Q)}+e^{\mu \gamma}\|\partial_\nu w\|_{L^2(\Sigma^\natural)}}\!\nr(a_1)\Big]
\end{align}
where we have used $(\p_t^2-\Delta_{\g_1})w=0$ in $\mathcal{O}\times(0,T)$. Moreover for $w$ solution of the equation (\ref{3.59}), we have
\begin{align}\label{4.10}
\|w\|_{H^1(Q)} &\leq C \norm{(\Delta_{\g_1}-\Delta_{\g_2})u_2}_{L^2(0,T;L^2(\M))}\cr
&\leq C\norm{\mathfrak{s}}_{\mathcal{C}^2}\norm{u_2}_{L^2(0,T;H^2(\M))}
\leq C(1+ h^{-3}\norm{\mathfrak{s}}_{\mathcal{C}^2})\nr(a_2)
\end{align}
and 
\begin{align}\label{4.11}
\|\partial_\nu w\|_{L^2(\Sigma^\natural)}\leq C \| \Lambda_{\g_1}^{\natural}-\Lambda_{\g_2}^{\natural}\rVert \| f_h \rVert_{H^{1,1}(\Sigma)}.
\end{align}
Further, we have 
\begin{align*}
\|f_h\|_{H^{1,1}(\Sigma)}= \|{u_2-r_{2,h}}\|_{H^{1,1}(\Sigma)} \leq& \|u_2-r_{2,h}\|_{L^2(0,T;H^2(\M))}+\|u_2-r_{2,h}\|_{H^1(0,T;H^1(\M))}\\
\leq & C h^{-4}\nr(a_2).
\end{align*}
This, (\ref{4.9}), (\ref{4.10}) and (\ref{4.11}) yield 
\begin{multline}\label{4.12}
\abs{\int_0^T\!\!\!\int_\M \varrho(x)(a_1\overline{a}_2)( t,x)\dv_{\g_1} \dd t}
\leq C\Big[\norm{\mathfrak{s}}_{\mathcal{C}^2}
\para{h+h^{-3}\norm{\mathfrak{s}}_{\mathcal{C}^2}}+ \gamma^{-\frac{1}{2}}(h+h^{-2}\norm{\mathfrak{s}}_{\mathcal{C}^2})\cr
+h^{-3}e^{\mu \gamma} \| \Lambda_{\g_1}^{\natural}-\Lambda_{\g_2}^{\natural}\rVert \Big]\nr(a_1)\nr(a_2).
\end{multline}
This completes the proof.
\end{proof}
\begin{lemma}\label{L4.2}
There exist $C>0$ and $\mu>0$ such that for any $b\in H^2(\p_+S\M_{1})$ the following estimate
\begin{multline}\label{4.13}
\abs{\int_{\p_+S\M_1}\I_{\g_1}(\alpha\mathfrak{s})(y,\theta)b(y,\theta)\mu(y,\theta)\dss_{\g_1} (y,\theta)}\leq C\Big[\norm{\mathfrak{s}}_{\mathcal{C}^2}\para{h+h^{-3}\norm{\mathfrak{s}}_{\mathcal{C}^2}}\cr
+ \gamma^{-\frac{1}{2}}(h+ h^{-2}\norm{\mathfrak{s}}_{\mathcal{C}^2})+h^{-3}e^{\mu \gamma} \| \Lambda_{\g_1}^{\natural}-\Lambda_{\g_2}^{\natural}\rVert \Big]\norm{b(y,\cdot)}_{H^2(S^+_y\M_{1})}.
\end{multline}
holds for any  $\gamma$ sufficiently large and $h$ sufficiently small.
\end{lemma}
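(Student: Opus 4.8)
The plan is to feed into the integral identity of Lemma~\ref{L4.1} an explicit pair of amplitudes $a_1,a_2$, both solving the transport equation (\ref{3.32}) with the phase $\ek_1=d_{\g_1}(\cdot,y)$ for the fixed point $y\in\Gamma_1=\p\M_1$, chosen so that the weighted space--time integral on the left of (\ref{4.1}) collapses onto the geodesic ray transform $\I_{\g_1}(\alpha\mathfrak{s})(y,\cdot)$ paired with $b(y,\cdot)$. The mechanism is that $\nabla_{\g_1}\ek_1=\dot\gamma_{y,\theta}$ by (\ref{3.23}), so that the contraction $\sum_{j,k}\mathrm{s}_{jk}\p_j\ek_1\p_k\ek_1$ entering $\varrho$ in (\ref{3.54}) is, along the geodesic $\gamma_{y,\theta}$, exactly the integrand whose $r$--integral defines $\I_{\g_1}(\alpha\mathfrak{s})(y,\theta)$.

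Concretely, in the geodesic polar coordinates $(r,\theta)$ about $y$ I would set, following (\ref{3.27}),
\[
\widetilde a_1(t,r,\theta)=\alpha_{\g_1}^{-1/4}(r,\theta)\,\kappa_1(t-r)\,b(y,\theta),\qquad \widetilde a_2(t,r,\theta)=\alpha_{\g_1}^{-1/4}(r,\theta)\,\kappa_2(t-r),
\]
with $\kappa_1,\kappa_2\in\mathcal{C}_0^\infty((0,\delta))$ normalised by $\int_\R\kappa_1(\tau)\overline{\kappa_2(\tau)}\,\dd\tau=1$; the support in $(0,\delta)$ secures (\ref{3.9}). Since the Riemannian volume reads $\dv_{\g_1}=\alpha_{\g_1}^{1/2}\,\dd r\,\dd\omega_y(\theta)$, the two factors $\alpha_{\g_1}^{-1/4}$ cancel it, and performing the $t$--integration first yields the constant $\int_\R\kappa_1\overline{\kappa_2}=1$, independent of $(r,\theta)$. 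What survives is
\[
\int_{S^+_y\M_1}\Bigl(\int_0^{\tau_+(y,\theta)}\varrho(\gamma_{y,\theta}(r))\,\dd r\Bigr)\,b(y,\theta)\,\dd\omega_y(\theta),
\]
and by (\ref{3.23}) together with the definition of $\varrho$ the inner integral equals $\I_{\g_1}(\alpha\mathfrak{s})(y,\theta)$. Expressing the solid--angle measure $\dd\omega_y$ in boundary coordinates introduces the weight $\mu(y,\theta)=\abs{\seq{\theta,\nu(y)}}$ and restricts to inward directions, so the left side of (\ref{4.1}) becomes the pairing $\int_{\p_+S\M_1}\I_{\g_1}(\alpha\mathfrak{s})\,b\,\mu\,\dss_{\g_1}$ appearing in (\ref{4.13}).

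It then remains to bound the amplitude norms $\nr(\cdot)$ of (\ref{3.6}). Because $a_1$ is linear in $b$ through the explicit formula above with $\alpha_{\g_1}^{-1/4},\kappa_1$ smooth and bounded uniformly in $y$ (by compactness of $\Gamma_1$ and the $\mathcal{C}^k$ control (\ref{3.2})), differentiating at most twice in the spatial variable $\theta$ and three times in $t$ gives $\nr(a_1)\leq C\norm{b(y,\cdot)}_{H^2(S^+_y\M_1)}$, while the choice of profile $1$ in $\widetilde a_2$ gives $\nr(a_2)\leq C$. Inserting these two bounds and the identification of the left-hand side into the estimate (\ref{4.1}) of Lemma~\ref{L4.1} yields (\ref{4.13}).

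The hard part will be the bookkeeping in the polar reduction: one must check that the factor $\alpha_{\g_1}^{1/2}$ from the volume element cancels the two $\alpha_{\g_1}^{-1/4}$ exactly, that the $t$--integration decouples from $(r,\theta)$ --- which is the reason both amplitudes must carry the \emph{same} phase $\ek_1$ and a common travelling profile of the form $\kappa(t-r)$ --- and, most importantly, that the $r$--integral of the contraction in $\varrho$ is genuinely the tensorial ray transform $\I_{\g_1}(\alpha\mathfrak{s})$, the point where (\ref{3.23}) identifying $\nabla_{\g_1}\ek_1$ with $\dot\gamma_{y,\theta}$ and the correct raising of indices against $\g_1$ are used. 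A secondary technical point is the uniformity of all constants in the base point $y$, which is guaranteed by the simplicity and smallness hypotheses on $\g_1$.
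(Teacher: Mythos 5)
Your construction is, in substance, the paper's own proof: same phase $\ek_1=d_{\g_1}(\cdot,y)$, same transported amplitudes of the form $\alpha_{\g_1}^{-1/4}\kappa(t-r)(\cdot)$ following (\ref{3.27}), same polar-coordinate reduction in which the volume factor $\alpha_{\g_1}^{1/2}$ cancels the two factors $\alpha_{\g_1}^{-1/4}$, the $t$-integration decouples because both amplitudes carry the common travelling profile, the identity (\ref{3.23}) turns the $r$-integral of $\widetilde{\varrho}$ into $\I_{\g_1}(\alpha\mathfrak{s})(y,\theta)$ exactly as in (\ref{4.15}), and a final integration over $y\in\p\M_1$ assembles the pairing on $\p_+S\M_1$. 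Your use of two profiles $\kappa_1,\kappa_2$ normalised by $\int_\R\kappa_1\overline{\kappa}_2\,\dd\tau=1$ instead of a single $\kappa$ is a harmless variant, and your bounds $\nr(a_1)\leq C\norm{b(y,\cdot)}_{H^2(S^+_y\M_1)}$, $\nr(a_2)\leq C$ are the ones the paper needs implicitly.

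There is, however, one genuinely wrong step: the weight $\mu(y,\theta)=\abs{\seq{\theta,\nu(y)}}$ does \emph{not} arise from ``expressing the solid-angle measure in boundary coordinates''. By definition $\dss=\dd\omega_x(\theta)\wedge\ds$, so integrating your per-point identity over $y\in\p\M_1$ produces the \emph{unweighted} pairing $\int_{\p_+S\M_1}\I_{\g_1}(\alpha\mathfrak{s})\,b\,\dss_{\g_1}$; no Jacobian in this computation generates a factor $\mu$. (The Santal\'o-type weight $\mu$ appears when one converts an integral over $S\M$ into an integral over $\p_+S\M\times[0,\tau_+]$ along the geodesic flow, which is not what happens here: for fixed $y$ you pass to polar coordinates, producing $\alpha_{\g_1}^{1/2}\,\dd r\,\dd\omega_y$, and the restriction to inward directions is automatic simply because $\varrho$ is supported in $\M\Subset\M_1$.) The paper sidesteps this by inserting $\mu$ by hand into the second amplitude, taking $\widetilde{a}_2(t,r,\theta)=\alpha_{\g_1}^{-1/4}\kappa(t-r)\mu(y,\theta)$; equivalently you could replace $b$ by $\mu b$ in your $a_1$, at the cost of a constant, since $\seq{\theta,\nu(y)}$ is smooth in $\theta$ and $\norm{\mu(y,\cdot)b(y,\cdot)}_{H^2(S^+_y\M_1)}\leq C\norm{b(y,\cdot)}_{H^2(S^+_y\M_1)}$, while $\nr(a_2)$ stays bounded. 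With that one-line repair your argument coincides with the paper's proof; as written, it proves the estimate for the unweighted pairing, which is not the statement (\ref{4.13}) and would not match the $L^2_\mu$ duality used in Lemma \ref{L4.3} when $b$ is chosen as $\I_{\g_1}(N_{\g_1}(\mathfrak{t}))$.
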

\begin{proof}
Following (\ref{3.27}), let $T$ sufficiently large  and take two solutions of the form
\begin{align*}
\widetilde{a}_1(t,r,\theta)&=\alpha_{\g_1}^{-1/4}\kappa(t-r)b(y,\theta), \\
\widetilde{a}_2(t,r,\theta)&=\alpha_{\g_1}^{-1/4}\kappa(t-r)\mu(y,\theta).
\end{align*}
Now we change variable in (\ref{4.1}), $x=\exp_{y}(r\theta)$, $r>0$ and
$\theta\in S_{y}\M_1$. Then
\begin{align*}
\int_0^{T^*}\!\!\int_\M &\varrho(x) a_1(t,x)a_2(t,x)\dv_{\g_1}\,\dd t\cr
&=\int_0^{T^*}\!\!\int_{S_{y}\M_1}\!\int_0^{\tau_+(y,\theta)}\widetilde{\varrho}(r,\theta)\widetilde{a}_1(t,r,\theta)
\widetilde{a}_2(t,r,\theta)\alpha_{\g_1}^{1/2} \,\dd r \, \dd \omega_y(\theta) \, \dd t \cr
&=\int_0^{T^*}\!\!\int_{S_{y}\M_1}\!\int_0^{\tau_+(y,\theta)}\widetilde{\varrho}(r,\theta)\kappa^2(t-r)b(y,\theta) \mu(y,\theta)
\, \dd r \, \dd \omega_y(\theta) \, \dd t \cr
&=\int_0^{T^*}\int_{S_{y}\M_1}\!\int_0^{\tau_+(y,\theta)}\widetilde{\varrho}(r,\theta)
\kappa^2( t-r)b(y,\theta)\mu(y,\theta) \, \dd r \, \dd \omega_y(\theta) \, \dd t.
\end{align*}
We conclude that
\begin{multline}\label{4.14}
\abs{\int_0^{T^*}\!\!\int_{S_{y}\M_1}\!\int_0^{\tau_+(y,\theta)}\widetilde{\varrho}(r,\theta)\kappa^2( t-r)b(y,\theta)\mu(y,\theta)
\, \dd r \,\dd \omega_y(\theta) \, \dd t} \leq C\Big[\norm{\mathfrak{s}}_{\mathcal{C}^2}\para{h+h^{-3}\norm{\mathfrak{s}}_{\mathcal{C}^2}}\cr
+ \gamma^{-\frac{1}{2}}(h+ h^{-2}\norm{\mathfrak{s}}_{\mathcal{C}^2})+h^{-3}e^{\mu \gamma} \| \Lambda_{\g_1}^{\natural}-\Lambda_{\g_2}^{\natural}\rVert \Big]\norm{b(y,\cdot)}_{H^2(S^+_y\M_{1})}
\end{multline}
where $S^+_y\M_{1}=\{\theta \in S_{y}\M_{1}: \langle \theta,\nu(y) \rangle_{\g}<0\}$.
Given the support properties of the function $\kappa$, the left-hand side of the inequality reads in fact
\begin{multline*}
\int_{-\infty}^{\infty}\int_{S_{y}\M_1}\!\int_0^{\tau_+(y,\theta)}\widetilde{\varrho}(r,\theta)\kappa^2( t-r)b(y,\theta)\mu(y,\theta)
\, \dd r \, \dd \omega_y(\theta) \, \dd t \cr
= \int_{S_{y}\M_1}\!\int_0^{\tau_+(y,\theta)}\widetilde{\varrho}(r,\theta)b(y,\theta)\mu(y,\theta)
\, \dd r \, \dd \omega_y(\theta).
\end{multline*}
Since $\nabla\ek_1=\dot{\gamma}_{y,\theta}(r)$, we obtain
\begin{equation}\label{4.15}
\int_0^{\tau_+(y,\theta)}\widetilde{\varrho}(r,\theta)dr=\sum_{j,k=1}^n\int_0^{\tau_+(y,\theta)}\alpha(\gamma_{y,\theta}(r))\mathrm{s}_{jk}(\gamma_{y,\theta}(r))\dot{\gamma}^j(r)\dot{\gamma}^k(r)dr=\I_{\g_1}(\alpha\mathfrak{s})(y,\theta).
\end{equation}
Integrating with respect to $y\in \p \M_1$ in \eqref{4.14} we obtain
\begin{multline*}
\abs{\int_{\p_+S\M_1}\I_{\g_1}(\alpha\mathfrak{s})(y,\theta)b(y,\theta)\mu(y,\theta)\dss_{\g_1} (y,\theta)}
\leq C\Big[\norm{\mathfrak{s}}_{\mathcal{C}^2}\para{h+h^{-3}\norm{\mathfrak{s}}_{\mathcal{C}^2}}\cr
+ \gamma^{-\frac{1}{2}}(h+ h^{-2}\norm{\mathfrak{s}}_{\mathcal{C}^2})+h^{-3}e^{\mu \gamma} \| \Lambda_{\g_1}^{\natural}-\Lambda_{\g_2}^{\natural}\rVert \Big]\norm{b(y,\cdot)}_{H^2(S^+_y\M_{1})}.
\end{multline*}
This completes the proof of the lemma.
\end{proof}
\subsection{Proof of main result}
Let us now prove Theorem \ref{Th1}. We denote $\mathfrak{t}(x)=(\mathrm{t}_{jk})=\alpha(x)\mathfrak{s}(x)$ the symmetric $2$-tensor given by
\begin{equation}\label{4.16}
\mathrm{t}_{jk}(x)=\alpha(x)(\g_2^{jk}-\g_1^{jk})=\alpha(x)\mathrm{s}_{jk}(x),\quad j,k=1,\ldots,n.
\end{equation}
Since $\g_1^{jn}=\g_2^{jn}=\delta_{jn}$, $j=1,\ldots,n$, we obtain
\begin{equation}\label{4.17}
\mathrm{t}_{jn}=\mathrm{t}_{nj}=0,\quad j=1,\ldots,n.
\end{equation}
Furthermore we have
\begin{equation}\label{4.18}
\norm{\mathfrak{t}}_{\mathcal{C}^2}\leq C\norm{\g_2^{-1}-\g_1^{-1}}_{\mathcal{C}^2}=C\norm{\g_2^{-1}(\g_1-\g_2)\g_1^{-1}}_{\mathcal{C}^2}\leq C\varepsilon.
\end{equation}
Moreover the tensor $\mathfrak{t}=(\mathrm{t}_{jk})$ admits a decomposition into solenoidal and potential parts
\begin{equation}\label{4.19}
\mathfrak{t}=\mathfrak{t}^{\textrm{sol}}+\nabla_{\textrm{sym}}\mathbf{v},\quad \mathbf{v}=0,\,\,\textrm{on}\,\p\M,
\end{equation}
where $\mathfrak{t}^{\textrm{sol}}$ is the solenoidal part, $\mathbf{v}=(\mathrm{v}_1,\ldots,\mathrm{v}_n)$ is vector field in $\M$ and 
$$
\nabla_{\textrm{sym}}\mathbf{v}=\frac{1}{2}(\nabla_j\mathrm{v}_k+\nabla_k\mathrm{v}_j),\quad 1\leq j,k\leq n,
$$
$\nabla_j$ is the covariant derivative in metric $\g_1$. Note that a tensor $\mathfrak{t}^{\textrm{sol}}$ is called solenoidal if $\delta^s\mathfrak{t}^{\textrm{sol}}=0$, where $\delta^s$ is dual operator to $\nabla_{\textrm{sym}}$. 
\begin{lemma}\label{L4.3}
There exist $C>0$, $\mu_j>$,  $j=1,2,3,4$ such that the following estimate holds true
\begin{equation}\label{4.20}
\norm{\mathfrak{t}^{\textrm{sol}}}_{H^2(\M)}\leq C\Big(\varepsilon^{\mu_1}\norm{\mathfrak{t}}_{L^2(\M)}+ \gamma^{-\mu_2}+e^{\mu_3 \gamma}\| \Lambda_{\g_1}^{\natural}-\Lambda_{\g_2}^{\natural}\rVert^{\mu_4}\Big).
\end{equation}
\end{lemma}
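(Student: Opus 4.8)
The plan is to funnel the weak bound on the geodesic ray transform coming from Lemma~\ref{L4.2} into the $s$-injectivity stability estimate \eqref{2.14} for the normal operator $N_{\g_1}=\I_{\g_1}^*\I_{\g_1}$. First I would take the supremum in \eqref{4.13} over all $b$ with $\norm{b(y,\cdot)}_{H^2(S^+_y\M_1)}\leq 1$. Since $\mathfrak{t}=\alpha\mathfrak{s}$, this recasts Lemma~\ref{L4.2} as a dual-norm estimate: the ray transform $\I_{\g_1}(\mathfrak{t})$ is controlled in the dual of $H^2(\p_+S\M_1)$ by
\begin{equation*}
\mathcal{E}(h,\gamma):=\norm{\mathfrak{s}}_{\mathcal{C}^2}\para{h+h^{-3}\norm{\mathfrak{s}}_{\mathcal{C}^2}}+\gamma^{-1/2}\para{h+h^{-2}\norm{\mathfrak{s}}_{\mathcal{C}^2}}+h^{-3}e^{\mu\gamma}\norm{\Lambda^{\natural}_{\g_1}-\Lambda^{\natural}_{\g_2}}.
\end{equation*}

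To transfer this to $N_{\g_1}\mathfrak{t}$, I would test against a smooth symmetric $2$-tensor $\varphi$ and use self-adjointness: $\para{N_{\g_1}\mathfrak{t},\varphi}=\para{\I_{\g_1}\mathfrak{t},\I_{\g_1}\varphi}_\mu$. Choosing $b=\I_{\g_1}\varphi$ and invoking the boundedness \eqref{2.13} of $\I_{\g_1}$ on $H^2$ gives $\abs{\para{N_{\g_1}\mathfrak{t},\varphi}}\leq C\,\mathcal{E}(h,\gamma)\norm{\varphi}_{H^2}$, that is, a dual-norm bound $\norm{N_{\g_1}\mathfrak{t}}_{H^{-2}(\M_1)}\leq C\,\mathcal{E}(h,\gamma)$.

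The decisive step is an interpolation that repairs the order mismatch. Because $\g_1=\g_2$ on $\mathcal{O}$, the tensor $\mathfrak{t}$ is compactly supported in the interior, and \eqref{4.18} together with the $\mathcal{C}^k$ regularity yields $\norm{\mathfrak{t}}_{\mathcal{C}^{k-2}}\leq C\varepsilon$; feeding this through the smoothing property \eqref{2.15} of the elliptic pseudodifferential operator $N_{\g_1}$ produces an a priori strong bound $\norm{N_{\g_1}\mathfrak{t}}_{H^{s}(\M_1)}\leq C\varepsilon$ for some large $s$ (this is where $k$ must be taken large). Interpolating this against the weak bound $\norm{N_{\g_1}\mathfrak{t}}_{H^{-2}}\leq C\mathcal{E}$ gives $\norm{N_{\g_1}\mathfrak{t}}_{H^{2}(\M_1)}\leq C\,\varepsilon^{1-\vartheta}\mathcal{E}(h,\gamma)^{\vartheta}$ with some $\vartheta\in(0,1)$, whereupon the $s$-injectivity estimate \eqref{2.14} delivers $\norm{\mathfrak{t}^{\textrm{sol}}}_{L^2(\M)}\leq C\varepsilon^{1-\vartheta}\mathcal{E}(h,\gamma)^{\vartheta}$. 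A second interpolation against the a priori $H^{s}$ bound of $\mathfrak{t}^{\textrm{sol}}$ (using that the solenoidal projection is bounded in high Sobolev norms) then upgrades the left-hand side to the desired $\norm{\mathfrak{t}^{\textrm{sol}}}_{H^2(\M)}$.

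It remains to clean up the right-hand side into the advertised three terms. In the ``bad'' factors I would simply use $\norm{\mathfrak{s}}_{\mathcal{C}^2}\leq C\varepsilon$, while in the leading ``good'' factor a Gagliardo--Nirenberg interpolation $\norm{\mathfrak{s}}_{\mathcal{C}^2}\leq C\norm{\mathfrak{s}}_{\mathcal{C}^{k-2}}^{1-\lambda}\norm{\mathfrak{s}}_{L^2}^{\lambda}\leq C\varepsilon^{1-\lambda}\norm{\mathfrak{t}}_{L^2}^{\lambda}$ recovers a genuine factor of $\norm{\mathfrak{t}}_{L^2}$ carrying a positive power of $\varepsilon$ — the term \eqref{4.20} is built to absorb later. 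Optimizing $h$ as a small power balancing the $h$-decay against the $h^{-3}e^{\mu\gamma}$-growth, while keeping $\gamma$ free for the global optimization of Theorem~\ref{Th1}, collapses $\varepsilon^{1-\vartheta}\mathcal{E}^{\vartheta}$ into $\varepsilon^{\mu_1}\norm{\mathfrak{t}}_{L^2}+\gamma^{-\mu_2}+e^{\mu_3\gamma}\norm{\Lambda^{\natural}_{\g_1}-\Lambda^{\natural}_{\g_2}}^{\mu_4}$ for suitable exponents $\mu_1,\dots,\mu_4$. The main obstacle is precisely this order mismatch: Lemma~\ref{L4.2} only provides a negative-order estimate on the ray transform, whereas \eqref{2.14} requires a positive-order $H^2(\M_1)$ bound on the normal operator, and bridging the gap is what forces the interpolation against the $\mathcal{C}^{k-2}$ smoothness (hence large $k$) and the delicate bookkeeping of interpolation and $h$-exponents that yields the fractional powers $\mu_i$.
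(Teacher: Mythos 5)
Your overall architecture coincides with the paper's: both arguments funnel Lemma \ref{L4.2} through the mapping property \eqref{2.13}, the smoothing estimate \eqref{2.15} and the $s$-injectivity stability \eqref{2.14}, repair the order mismatch by Sobolev interpolation, tie $h$ to the size of the tensor, convert $\mathcal{C}^2$-norms into $L^2$-norms by interpolation with $k$ large, and finish with one more interpolation to lift $\mathfrak{t}^{\textrm{sol}}$ from $L^2$ to $H^2$. The only organizational difference is at the start: the paper does not pass through a dual $H^{-2}$ bound but takes the concrete test function $b=\I_{\g_1}\para{N_{\g_1}(\mathfrak{t})}$, which produces the quadratic inequality $\norm{N_{\g_1}(\mathfrak{t})}^2_{L^2(\M_1)}\leq C\,\mathcal{E}(h,\gamma)\norm{N_{\g_1}(\mathfrak{t})}_{H^2(\M_1)}$ and is then closed by interpolating $H^2$ between $L^2$ and $H^3$, so that \eqref{2.15} is needed only at order $k=2$; your $H^{-2}$--$H^{s}$ interpolation is an admissible variant but forces $s$ (hence $k$) to be large just to make $\vartheta$ close to $1$. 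That much is a matter of taste.

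The genuine flaw is in your final optimization. First, balancing ``the $h$-decay against the $h^{-3}e^{\mu\gamma}$-growth'', i.e.\ $h\sim\para{e^{\mu\gamma}\norm{\Lambda^{\natural}_{\g_1}-\Lambda^{\natural}_{\g_2}}}^{1/4}$, destroys the first group of terms: $\norm{\mathfrak{s}}^2_{\mathcal{C}^2}\,h^{-3}$ blows up as $\norm{\Lambda^{\natural}_{\g_1}-\Lambda^{\natural}_{\g_2}}\to0$, which is exactly the regime a stability estimate must cover; the correct balance is internal to the first group, $h=\norm{\mathfrak{t}}^{1/4}_{\mathcal{C}^2}$, as in the paper. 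Second, with that correct choice your symmetric interpolation output $\varepsilon^{1-\vartheta}\mathcal{E}(h,\gamma)^{\vartheta}$ still does not collapse to \eqref{4.20}: its DN-term becomes $\varepsilon^{1-\vartheta}\norm{\mathfrak{t}}^{-3\vartheta/4}_{\mathcal{C}^2}e^{\mu\vartheta\gamma}\norm{\Lambda^{\natural}_{\g_1}-\Lambda^{\natural}_{\g_2}}^{\vartheta}$, and the negative power of $\norm{\mathfrak{t}}_{\mathcal{C}^2}$ cannot be folded into $e^{\mu_3\gamma}\norm{\Lambda^{\natural}_{\g_1}-\Lambda^{\natural}_{\g_2}}^{\mu_4}$ uniformly. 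The paper escapes precisely because its interpolated inequality is \emph{asymmetric}: it reads $\norm{\mathfrak{t}^{\textrm{sol}}}^2_{L^2(\M)}\leq C\,\mathcal{E}(h,\gamma)^{1/3}\norm{\mathfrak{t}}^{5/3}_{\mathcal{C}^2}$, and the spare multiplicative factor $\norm{\mathfrak{t}}^{5/3}_{\mathcal{C}^2}$ absorbs the residual $h^{-1}=\norm{\mathfrak{t}}^{-1/4}_{\mathcal{C}^2}$ since $5/3>1/4$. To rescue your version you must either keep the strong side of the interpolation as a norm of $\mathfrak{t}$ (via \eqref{2.15}, $\norm{N_{\g_1}(\mathfrak{t})}_{H^{s}}\lesssim\norm{\mathfrak{t}^{\textrm{sol}}}_{H^{s-1}}\lesssim\norm{\mathfrak{t}}_{\mathcal{C}^{s-1}}$) instead of crudely replacing it by $C\varepsilon$, so that a positive power of $\norm{\mathfrak{t}}$ survives to absorb $h^{-3\vartheta}$, or insert a case distinction such as $e^{\mu\gamma}\norm{\Lambda^{\natural}_{\g_1}-\Lambda^{\natural}_{\g_2}}\leq\norm{\mathfrak{t}}^{K}_{\mathcal{C}^2}$ versus its opposite for a large fixed $K$ (in the second case $\norm{\mathfrak{t}^{\textrm{sol}}}_{L^2}\leq\norm{\mathfrak{t}}_{L^2}$ is already of the advertised form). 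As written, the last step does not go through.
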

\begin{proof}
We choose $b(y,\theta)=\I_{\g_1}\para{N_{\g_1}(\mathfrak{t})}(y,\theta)$ and obtain using Lemma \ref{L4.2} and \eqref{2.13}
\begin{multline*}
\norm{N_{\g_1}(\mathfrak{t})}^2_{L^2(\M_1)} \leq C\Big[\norm{\mathfrak{t}}_{\mathcal{C}^2}
\big(h+ h^{-3}\norm{\mathfrak{t}}_{\mathcal{C}^2}\big)\cr
+ \gamma^{-\frac{1}{2}}(h+ h^{-2}\norm{\mathfrak{t}}_{\mathcal{C}^2})+h^{-3}e^{\mu \gamma} \| \Lambda_{\g_1}^{\natural}-\Lambda_{\g_2}^{\natural}\rVert \Big]\norm{N_{\g_1}(\mathfrak{t})}_{H^2(\M_1)}.
\end{multline*}
By interpolation inequality, we have
\begin{multline*}
\norm{N_{\g_1}(\mathfrak{t})}^2_{H^2(\M_1)}\leq C \norm{N_{\g_1}(\mathfrak{t})}_{L^2(\M_1)}^{2/3}\norm{N_{\g_1}(\mathfrak{t})}^{4/3}_{H^3(\M_1)}
\leq C\Big[\norm{\mathfrak{t}}_{\mathcal{C}^2}
\para{h+h^{-3}\norm{\mathfrak{t}}_{\mathcal{C}^2}}\cr
+ \gamma^{-\frac{1}{2}}(h+h^{-2}\norm{\mathfrak{t}}_{\mathcal{C}^2})+h^{-3}e^{\mu \gamma} \| \Lambda_{\g_1}^{\natural}-\Lambda_{\g_2}^{\natural}\rVert \Big]^{1/3}\norm{N_{\g_1}(\mathfrak{t})}_{H^3(\M_1)}^{5/3}.
\end{multline*}
We use \eqref{2.15} and \eqref{2.14} to deduce
\begin{multline*}
\|\mathfrak{t}^{\textrm{sol}}\|^2_{L^2(\M)} \leq C\Big[\norm{\mathfrak{t}}_{\mathcal{C}^2}
\para{h+h^{-3}\norm{\mathfrak{t}}_{\mathcal{C}^2}}
+ \gamma^{-\frac{1}{2}}(h+h^{-2}\norm{\mathfrak{t}}_{\mathcal{C}^2})+h^{-3}e^{\mu \gamma} \| \Lambda_{\g_1}^{\natural}-\Lambda_{\g_2}^{\natural}\rVert \Big]^{\frac{1}{3}}\norm{\mathfrak{t}}_{\mathcal{C}^2}^{\frac{5}{3}} \\
\leq C\Big(\norm{\mathfrak{t}}^2_{\mathcal{C}^2}
\para{h^{1/3}+h^{-1}\norm{\mathfrak{t}}_{\mathcal{C}^2}^{1/3}}
+ \gamma^{-\frac{1}{6}}(h^{1/3}+h^{-2/3}\norm{\mathfrak{t}}_{\mathcal{C}^2}^{1/3})+h^{-1}e^{\mu \gamma} \norm{\mathfrak{t}}_{\mathcal{C}^2}^{5/3}\| \Lambda_{\g_1}^{\natural}-\Lambda_{\g_2}^{\natural}\rVert^{1/3} \Big).
\end{multline*}
Fixing $h=\norm{\mathfrak{t}}_{\mathcal{C}^2(\M)}^{1/4}$,  we get
\begin{align*}
\norm{\mathfrak{t}^{\textrm{sol}}}^2_{L^2(\M)} &\leq C \Big(\norm{\mathfrak{t}}_{\mathcal{C}^2(\M)}^{2(1+\alpha)}+ \gamma^{-\frac{1}{6}}+e^{\mu \gamma}\| \Lambda_{\g_1}^{\natural}-\Lambda_{\g_2}^{\natural}\rVert^{1/3}\Big) \\
&\leq C \Big(\varepsilon^{\alpha} \norm{\mathfrak{t}}_{\mathcal{C}^2(\M)}^{2+\alpha}+\gamma^{-\frac{1}{6}}+e^{\mu \gamma}\| \Lambda_{\g_1}^{\natural}-\Lambda_{\g_2}^{\natural}\rVert^{1/3}\Big), 
\end{align*}
for some $\alpha>0$. For $\beta_1=\frac{2+\alpha/2}{2+\alpha}$, we can find $k_1$ sufficiently large such that
\begin{align*}
\norm{\mathfrak{t}}_{\mathcal{C}^2(\M)}&\leq C \norm{\mathfrak{t}}_{H^{n/2+3}(\M)} \\  
&\leq C \norm{\mathfrak{t}}_{L^2(\M)}^{\beta_1}\norm{\mathfrak{t}}_{H^{k_1}(\M)}^{1-\beta_1} \leq C \norm{\mathfrak{t}}_{L^2(\M)}^{\beta}
\end{align*}
we therefore obtain
$$
\norm{\mathfrak{t}^{\textrm{sol}}}_{L^2(\M)}\leq C \varepsilon^{\alpha/2} \norm{\mathfrak{t}}_{L^2(\M)}^{1+\frac{\alpha}{4}}+ \Big(\gamma^{-\frac{1}{12}}+e^{\mu \gamma}\| \Lambda_{\g_1}^{\natural}-\Lambda_{\g_2}^{\natural}\rVert^{1/6}\Big)
$$
Moreover, for $\beta_2=\frac{1}{1+\alpha/4}$, we can find $k_2$ sufficiently large we have
\begin{equation}\label{4.21}
\norm{\mathfrak{t}^{\textrm{sol}}}_{H^2(\M)}\leq \norm{\mathfrak{t}^{\textrm{sol}}}^{\beta_2}_{L^2(\M)}\norm{\mathfrak{t}^{\textrm{sol}}}^{1-\beta_2}_{H^{k_2}(\M)}\leq C\Big(\varepsilon^{\mu_1}\norm{\mathfrak{t}}_{L^2(\M)}+ \gamma^{-\mu_2}+e^{\mu_3 \gamma}\| \Lambda_{\g_1}^{\natural}-\Lambda_{\g_2}^{\natural}\rVert^{\mu_4}\Big).
\end{equation}
This completes the proof.
\end{proof}

\begin{lemma}\label{L4.4} 
Let $\mathfrak{t}$ be given by (\ref{4.16}). Then there exist a constant $C>0$ such that, the following estimate holds true
\begin{equation}\label{4.22}
\norm{\mathfrak{t}}_{L^2(\M)}\leq C\norm{\mathfrak{t}^{\textrm{sol}}}_{H^2(\M)}.
\end{equation}
\end{lemma}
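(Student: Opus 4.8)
The plan is to exploit the gauge condition \eqref{4.17}, which states that the whole $n$-column of $\mathfrak{t}$ vanishes, together with the semigeodesic normalisation \eqref{3.1} of $\g_1$, in order to reconstruct the potential part $\nabla_{\textrm{sym}}\mathbf{v}$ of $\mathfrak{t}$ from its solenoidal part $\mathfrak{t}^{\textrm{sol}}$ by integrating a triangular system of ordinary differential equations along the $x_n$-lines. Using the orthogonal decomposition \eqref{4.19} and the triangle inequality, $\norm{\mathfrak{t}}_{L^2(\M)}\leq\norm{\mathfrak{t}^{\textrm{sol}}}_{L^2(\M)}+\norm{\nabla_{\textrm{sym}}\mathbf{v}}_{L^2(\M)}$, so it suffices to bound $\norm{\nabla_{\textrm{sym}}\mathbf{v}}_{L^2(\M)}$, and for this in turn it is enough to control $\mathbf{v}$ in $H^1(\M)$.

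First I would record the consequences of \eqref{3.1}. Since $(\g_1)_{in}=\delta_{in}$ one has $\g_1^{in}=\delta_{in}$, the $x_n$-coordinate lines are unit-speed geodesics, and a direct computation of the Christoffel symbols $\Gamma^k_{ij}$ of $\g_1$ gives $\Gamma^k_{nn}=0$ and $\Gamma^n_{in}=0$ for all $i,k$. Consequently $\nabla_n \mathrm{v}_n=\p_n \mathrm{v}_n$, while for $i<n$ one has $2(\nabla_{\textrm{sym}}\mathbf{v})_{in}=\p_n \mathrm{v}_i+\p_i \mathrm{v}_n-2\sum_{l<n}\Gamma^l_{in}\mathrm{v}_l$. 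Combining these with the gauge condition $\mathrm{t}_{in}=0$, which by \eqref{4.19} reads $(\nabla_{\textrm{sym}}\mathbf{v})_{in}=-(\mathfrak{t}^{\textrm{sol}})_{in}$, yields the evolution system in the $x_n$ variable
\begin{equation*}
\p_n \mathrm{v}_n=-(\mathfrak{t}^{\textrm{sol}})_{nn},\qquad \p_n \mathrm{v}_i=-2(\mathfrak{t}^{\textrm{sol}})_{in}-\p_i \mathrm{v}_n+2\sum_{l<n}\Gamma^l_{in}\,\mathrm{v}_l,\quad i<n,
\end{equation*}
supplemented by the boundary condition $\mathbf{v}=0$ on $\Gamma_-$ coming from \eqref{4.19}.

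Then I would integrate this system along the $x_n$-lines, which enter $\M$ on $\Gamma_-$ and along which $\mathbf{v}$ vanishes at the entry point. Integrating the first equation expresses $\mathrm{v}_n$ as the $x_n$-primitive of $-(\mathfrak{t}^{\textrm{sol}})_{nn}$, whence $\norm{\mathrm{v}_n}_{L^2(\M)}\leq C\norm{\mathfrak{t}^{\textrm{sol}}}_{L^2(\M)}$ and, after tangential differentiation, $\norm{\p_i \mathrm{v}_n}_{L^2(\M)}\leq C\norm{\mathfrak{t}^{\textrm{sol}}}_{H^1(\M)}$. Feeding this into the second equation and applying Gronwall's inequality to the linear system for $(\mathrm{v}_1,\dots,\mathrm{v}_{n-1})$ (the coupling coefficients $\Gamma^l_{in}$ being bounded and the $x_n$-lines of bounded length) gives $\norm{\mathrm{v}_i}_{L^2(\M)}\leq C\norm{\mathfrak{t}^{\textrm{sol}}}_{H^1(\M)}$. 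To reach $H^1$ control of the tangential components I would differentiate the $\mathrm{v}_i$-system once in a tangential direction $x_j$, $j<n$, and run Gronwall again; the only term not already controlled in $L^2$ by $\norm{\mathfrak{t}^{\textrm{sol}}}_{H^1(\M)}$ is $\p_j\p_i \mathrm{v}_n$, which is an $x_n$-primitive of $\p_j\p_i(\mathfrak{t}^{\textrm{sol}})_{nn}$ and hence bounded by $\norm{\mathfrak{t}^{\textrm{sol}}}_{H^2(\M)}$. This is precisely where the loss of two derivatives enters, so that $\norm{\p_j \mathrm{v}_i}_{L^2(\M)}\leq C\norm{\mathfrak{t}^{\textrm{sol}}}_{H^2(\M)}$ for $i,j<n$, and altogether $\norm{\mathbf{v}}_{H^1(\M)}\leq C\norm{\mathfrak{t}^{\textrm{sol}}}_{H^2(\M)}$.

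Finally I would estimate $\nabla_{\textrm{sym}}\mathbf{v}$ component by component: the entries $(\nabla_{\textrm{sym}}\mathbf{v})_{in}=-(\mathfrak{t}^{\textrm{sol}})_{in}$ are bounded in $L^2$ by $\norm{\mathfrak{t}^{\textrm{sol}}}_{L^2(\M)}$, while for $i,j<n$ the entry $(\nabla_{\textrm{sym}}\mathbf{v})_{ij}=\tfrac12(\p_i \mathrm{v}_j+\p_j \mathrm{v}_i)-\sum_l\Gamma^l_{ij}\,\mathrm{v}_l$ involves only tangential derivatives of $\mathbf{v}$ together with $\mathbf{v}$ itself, hence is controlled by $\norm{\mathbf{v}}_{H^1(\M)}\leq C\norm{\mathfrak{t}^{\textrm{sol}}}_{H^2(\M)}$. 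Summing the squared contributions gives $\norm{\nabla_{\textrm{sym}}\mathbf{v}}_{L^2(\M)}\leq C\norm{\mathfrak{t}^{\textrm{sol}}}_{H^2(\M)}$ and therefore \eqref{4.22}. The main obstacle is the bookkeeping of derivative losses in the Gronwall estimates --- in particular checking that the worst term $\p_j\p_i\mathrm{v}_n$ costs exactly two derivatives of $\mathfrak{t}^{\textrm{sol}}$ and no more --- together with ensuring that the entry-point map $x'\mapsto$ (the point where the $x_n$-line meets $\Gamma_-$) is smooth enough, which is guaranteed since $\g_1=e$ near $\Gamma$, so that the boundary terms produced by tangential differentiation are harmless and are absorbed into $\norm{\mathfrak{t}^{\textrm{sol}}}_{H^2(\M)}$; this also requires $k$ large enough for the Christoffel symbols to be $\mathcal{C}^1$.
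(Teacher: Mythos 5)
Your proposal is correct and follows essentially the same route as the paper's proof: you exploit the gauge condition $\mathrm{t}_{jn}=0$ and the semigeodesic form \eqref{3.1} of $\g_1$ to turn the decomposition \eqref{4.19} into a triangular ODE system in $x_n$ for $\mathbf{v}$ (your evolution equations are exactly the paper's \eqref{4.24}--\eqref{4.28}), integrate from the boundary where $\mathbf{v}$ vanishes, and apply Gr\"onwall twice --- once for $\mathbf{v}$ in $L^2$ and once after tangential differentiation --- with the same two-derivative loss entering through $\p_j\p_i\mathrm{v}_n$ as a primitive of $\p_j\p_i\mathrm{t}^{\textrm{sol}}_{nn}$. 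Your explicit identification of $\Gamma^k_{nn}=0$ and $\Gamma^n_{in}=0$, and your componentwise estimate of $\nabla_{\textrm{sym}}\mathbf{v}$ at the end, match the paper's computation \eqref{4.25}--\eqref{4.35}, so nothing further is needed.
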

\begin{proof}
 Since $\mathfrak{t}=(\mathrm{t}_{jk})_{jk}$ is such that $\mathrm{t}_{nk}=0$ in $\M$ we can estimate
\begin{equation}\label{4.23}
\norm{\nabla_{\textrm{sym}}\mathbf{v}}_{L^2(\M)}\leq C\norm{\mathfrak{t}^{\textrm{sol}}}_{H^{2}(\M)}.
\end{equation}
Indeed we have
\begin{equation}\label{4.24}
\nabla_n\mathrm{v}_k+\nabla_k\mathrm{v}_n=-2\mathrm{t}^{\textrm{sol}}_{kn},\quad k=1,\ldots,n.
\end{equation}
In particular $\nabla_n\mathrm{v}_n=-\mathrm{t}^{\textrm{sol}}_{nn}$, and we have
\begin{equation}\label{4.25}
\nabla_n\mathrm{v}_j=\frac{\p \mathrm{v}_j}{\p x_n}-\sum_{k=1}^n\Gamma_{jn}^k\mathrm{v}_k=\frac{\p \mathrm{v}_j}{\p x_n}-\frac{1}{2}\sum_{\ell, k=1}^n\g_1^{k\ell }\frac{\p\g_{1\ell k}}{\p x_n}\mathrm{v}_k,
\end{equation}
where we have used
$$
\Gamma_{jn}^k=\frac{1}{2}\sum_{\ell=1}\g_1^{k\ell}\para{\frac{\p \g_{1\ell j}}{\p x_n}+\frac{\p \g_{1\ell n}}{\p x_j}-\frac{\p \g_{1n j}}{\p x_\ell} }= \frac{1}{2}\sum_{\ell=1}\g_1^{k\ell}\frac{\p \g_{1\ell j}}{\p x_n}.
$$
Therefore, since $\g_{1\ell n}=\delta_{\ell n}$, we get 
$$
\nabla_n\mathrm{v}_n=\frac{\p \mathrm{v}_n}{\p x_n},
$$
Thus
\begin{equation}\label{4.26}
\mathrm{v}_n=-\int_{-\infty}^{x_n} \mathrm{t}_{nn}^{\textrm{sol}}(x)dx_n,
\end{equation}
and
\begin{equation}\label{4.27}
\norm{\mathrm{v}_n}_{H^2(\M)}\leq C\norm{\mathrm{t}^{\textrm{sol}}_{nn}}_{H^2(\M)}.
\end{equation}
Then we can estimate $\mathrm{v}_j$, $j=1,\ldots,n-1$ from the relation
\begin{equation}\label{4.28}
\frac{\p \mathrm{v}_j}{\p x_n}-\frac{1}{2}\sum_{\ell, k=1}^n\g_1^{k\ell }\frac{\p\g_{1\ell k}}{\p x_n}\mathrm{v}_k=-2\mathrm{t}^{\textrm{sol}}_{jn}-\nabla_j\mathrm{v}_n.
\end{equation}
Integrating in $x_n$, we get for $j=1,\ldots,n-1$, 
\begin{equation}\label{4.29}
\abs{\mathrm{v}_j(x',x_n)}\leq C\para{\sum_{k=1}^{n-1}\int_{-R}^{x_n}\abs{\mathrm{v}_k(x',y_n)}dy_n+\int_{-R}^{x_n}\para{\abs{\mathrm{t}^{\textrm{sol}}_{jn}(x',y_n)}+\abs{\mathrm{v}_n(x',y_n)}+ \abs{\nabla_j\mathrm{v}_n(x',y_n)}}dy_n}.
\end{equation}
We denote
\begin{equation*}\label{4.30}
\phi(x_n)=\sum_{j=1}^{n-1}\abs{\mathrm{v}_j(x',y_n)},\quad \zeta(x_n)=\sum_{j=1}^{n-1}\int_{-R}^{x_n}\para{\abs{\mathrm{t}^{\textrm{sol}}_{jn}(x',y_n)}+\abs{\mathrm{v}_n(x',y_n)}+ \abs{\nabla_j\mathrm{v}_n(x',y_n)}}dy_n,
\end{equation*}
we deduce that
\begin{equation}\label{4.31}
\phi(x_n)\leq C\para{\int_{-R}^{x_n}\phi(y_n)dy_n+\zeta(x_n)}.
\end{equation}
Applying Gr\"onwall's inequality we get
\begin{equation*}\label{4.32}
\abs{\mathrm{v}_j(x)}\leq \phi(x_n)\leq C\para{\zeta(x_n)+\int_{-R}^{x_n}\zeta(y_n)dy_n}, \quad j=1,\ldots,n-1
\end{equation*}
Integrating with respect $x\in\M$, we find
\begin{equation}\label{4.33}
\norm{\mathrm{v}_j}_{L^2(\M)}\leq C\sum_{j=1}^{n-1}\norm{\mathrm{t}^{\textrm{sol}}_{jn}}_{L^2(\M)}+C\norm{\mathrm{v}_n}_{H^1(\M)}\leq C\norm{\mathfrak{t}^{\textrm{sol}}}_{H^2(\M)}.
\end{equation}
Similarly, taking $\nabla_m$, $m=1,\ldots,n$ for (\ref{4.28}) and applying  Gr\"onwall's inequality, we get 
\begin{equation}\label{4.34}
\norm{\mathrm{v}_j}_{H^1(\M)}\leq C\sum_{j=1}^{n-1}\norm{\mathrm{t}^{\textrm{sol}}_{jn}}_{H^1(\M)}+C\norm{\mathrm{v}_n}_{H^2(\M)}\leq C\norm{\mathfrak{t}^{\textrm{sol}}}_{H^2(\M)}.
\end{equation}
Therefore
\begin{equation}\label{4.35}
\norm{\nabla_{\textrm{sym}}\mathrm{v} }_{L^2(\M)}\leq C\norm{\mathfrak{t}^{\textrm{sol}}}_{H^2(\M)}.
\end{equation}
Taking $\mathfrak{t}=\mathfrak{t}^{\textrm{sol}}+\nabla_{\textrm{sym}}\mathrm{v}$, we get
\begin{equation}\label{4.36}
\norm{\mathfrak{t}}_{L^2(\M)}\leq C\norm{\mathfrak{t}^{\textrm{sol}}}_{H^2(\M)}.
\end{equation}
This completes the proof.
\end{proof}
We return now to the proof of the main result. By (\ref{4.20}) and (\ref{4.22}) and taking $\varepsilon$ sufficiently small we get
\begin{equation}\label{4.37}
\norm{\mathfrak{s}}_{L^2(\M)}=\norm{\g^{-1}_2-\g^{-1}_1}_{L^2(\M)}\leq C\norm{\mathfrak{t}}_{L^2(\M)}\leq C\para{\gamma^{-\mu_2}+e^{\mu_3\gamma}\norm{\Lambda^\natural_{\g_1}-\Lambda^\natural_{\g_2}}^{\mu_4}}.
\end{equation}
Writing $\g_1-\g_2=\g_2\mathfrak{s}\g_1$, we obtain
\begin{equation}\label{4.38}
\norm{\g_1-\g_2}_{L^2(\M)}\leq C\norm{\mathfrak{s}}_{L^2(\M)}\leq  C\para{\gamma^{-\mu_2}+e^{\mu_3\gamma}\norm{\Lambda^\natural_{\g_1}-\Lambda^\natural_{\g_2}}^{\mu_4}}.
\end{equation}
Selecting
$$
\gamma=\frac{1}{2\mu_2}\log\para{2+\norm{\Lambda^\natural_{\g_1}-\Lambda^\natural_{\g_2}}^{-\mu_4}}
$$ 
we obtain (\ref{2.42}) and then (\ref{1.18}).\\
This ends the proof of theorem.




\begin{thebibliography}{99}
%
\bibitem{Amirov1}{A. Amirov, }\emph{Boundary rigity for Riemannian manifolds}, Selcuk J. Appl. Math. vol. 4,5-12, 2003.
%
\bibitem{Amirov2}{A. Amirov, }\emph{Integral geometry and inverse problems for kinetic equations}, VSP, Utrecht, Boston, Koln, Tokyo, 2001.
%
\bibitem{AKKLT}
\newblock M.~Anderson, A.~Katsuda,Y.~Kurylev,M.~Lassas,M.~Taylor, 
\newblock Boundary regularity for the Ricci equation, geometric convergence and Gel'fand's inverse boundary problem,
\newblock \emph{Inventiones Math}. 158 (2004), 261-321. 
%
\bibitem{Beli}
\newblock M.~Belishev,
\newblock Boundary control in reconstruction of manifolds and metrics (BC method),
\newblock \emph{Inverse Problems} 13 (1997), R1 - R45.
%
\bibitem{BK}
\newblock M. Belishev, Y. Kurylev, 
\newblock To the reconstruction of a Riemannian manifold via its spectral data (BC-method). 
\newblock \emph{Comm. Partial Differential Equations}, \textbf{17} (1992), 767-804.
%
%
\bibitem{B}
\newblock M. Bellassoued,
\newblock Stable determination of coefficients in the dynamical Schr\"odinger equation in a magnetic field,
\newblock \emph{Inverse Problems}, \textbf{33} (2017), 055009.
%
%
\bibitem{Bell-Benfraj}
\newblock M. Bellassoued and O. Ben Fraj,
\newblock Stable recovery of time dependent coefficient from arbitrary measurements for wave equation,
\newblock \emph{Journal of Mathematical Analysis and Applications}, 482 (2020), 123533.
%
\bibitem{Bell-Bejoud} 
\newblock M. Bellassoued and H. Benjoud,
\newblock Stability estimate for an inverse problem for the wave equation in a magnetic field,
\newblock \emph{Applicable Analysis}, 87 (2008), 277-292.
%

%
\bibitem{BCY}
\newblock M. Bellassoued, M. Choulli and M. Yamamoto,
\newblock Stability estimate for an inverse wave equation and a multidimensional Borg-Levinson theorem,
\newblock \emph{Journal of Differential Equations}, 247 (2009), 465-494.
%
%
\bibitem{BF} 
\newblock M. Bellassoued, D.D.S. Ferreira,
\newblock Stability estimates for the anisotropic wave equation from the Dirichlet-to-Neumann map,
\newblock \emph{Inverse Problems and Imaging}, \textbf{5} (2011), 745-773.
%
\bibitem{BJY}
\newblock M. Bellassoued, D. Jellali, and M. Yamamoto,
\newblock Stability estimate for the hyperbolic inverse boundary value problem by local Dirichlet-to-Neumann map,
\newblock \emph{Journal of Mathematical Analysis and Applications}, 343  (2008), 1036-1046.
%
%
%
\bibitem{BR} 
\newblock M. Bellassoued, Z.Rezig,
\newblock Simultaneous determination of two coefficients in the Riemannian hyperbolic equation from boundary measurements,
\newblock \emph{Annals of Global Analysis and Geometry}, \textbf{56} (2019), 291-325.
%
\bibitem{[Bukhgeim-Uhlmann]}{A.~L.~Bukhgeim and G.~Uhlmann, }{\it Recovering a potential from Cauchy data}, Comm. Partial Diff.
Equations 27 (2) (2002), 653-668.
%
%
\bibitem{[Calderon]}{A.~P.~Calder\'on, }{\it On an inverse boundary value problem,} in Seminar on Numerical Analysis and its Applications to Continuum Physics, Rio de Janeiro, (1988), 65-73.
%
%
\bibitem{CR}
\newblock K. C. Creager, 
\newblock Anisotropy of the inner core from differential travel times of the phases PKP and PKIPK, 
\newblock \emph{Nature}, 356 (1992), 309-314.
%
\bibitem{[DKSU]}{D.~Dos Santos Ferreira, C.~E.~Kenig, M.~Salo, and G.~Uhlmann, }{\it Limiting Carleman weihts and anisotropic inverse problems}, Inventiones Math. 178 (2009), 119-171.
%
%
%
\bibitem{[Eskin3]}{G.~Eskin, }{\it Global uniqueness in the inverse scattering problem for the Schr\"odinger operator with external
Yang-Mills potentials}, Comm. Math. Phys. 222 (2001), no. 3,
503-531.
%
\bibitem{[Eskin4]}{G.~Eskin, }{\it Inverse scattering problem in anisotropic media}, Comm. Math. Phys. 199 (1998), no. 2, 471-491.
%
\bibitem{Heg}
\newblock G. Heglotz,
\newblock Uber die Elastizitaet der Erde bei Beruecksichtigung ihrer variablen Dichte, 
\newblock \emph{Zeitschr. fur Math. Phys.}, 52 (1905), 275-299.
%
%
\bibitem{[Hech-Wang]}{H.Hech-J-N.Wang, }{\it Stablity estimates for the inverse boundary value problem by partial Cauchy data},  Inverse Problems, 22 (2006), 1787-1796.
%
%
%
\bibitem{[IS]}{V.~Isakov and Z.~Sun, }{\it Stability estimates for hyperbolic inverse problems with local boundary data}, Inverse Problems 8 (1992), 193-206.
%
\bibitem{[Jost]}{J.~Jost, }{\it Riemannian Geometry and Geometric Analysis},  Universitext, Springer, New York, 1995,
%


\bibitem{KKL}
\newblock A.~Katchalov, Y.~Kurylev and M.~Lassas,  
\newblock Inverse Boundary Spectral Problems,
\newblock Chapman \& Hall/CRC, Boca Raton, (2001).

\bibitem{KL}
\newblock Y.V.~Kurylev and M.~Lassas, 
\newblock Hyperbolic inverse problem with data on a part of the boundary, 
\newblock  \emph{Differential Equations and Mathematical Physics}, AMS/IP Stud. Adv. Math. 16, Amer. Math. Soc., Providence, (2000), 259-272.
\bibitem{KKLM}
\newblock A. Katchalov, Y. Kurylev, M. Lassas, and N. Mandoche, 
\newblock Equivalence of time-domain inverse problems and boundary spectral problems, 
\newblock \emph{Inverse Problems}, 20 (2004), 419-436.

%

\bibitem{LO}
\newblock M. Lassas and L. Oksanen,
\newblock Inverse problem for the Riemannian wave equation with Dirichlet data and Neumann data on disjoint sets,
\newblock \emph{Duke Mathematical Journal}, 163 (2014), 1071-1103.
%
%
\bibitem{LiuOk}
\newblock  S. Liu and L.Oksanen,
\newblock A Lipschitz stable reconstruction formula for the inverse problem for the wave equation, 
\newblock \emph{Transactions of the American Mathematical Society}, 368 (2016), 319-335.
%
\bibitem{Montalto}
\newblock C. Montalto,
\newblock  Stable determination of a simple metric, a covector field and a potential from the hyperbolic Dirichlet-to-Neumann map,
\newblock \emph{Communications in Partial Differential Equations}, 39 (2014), 120-145.
%
%
%
\bibitem{[Rakesh1]}{Rakesh, }{\it Reconstruction for an inverse problem for the wave equation
with constant velocity}, Inverse Problems 6 (1990), 91-98.
%
%
\bibitem{[Salo]}{M.~Salo, }{\it Semiclassical Pseudodifferential Calculus and the Reconstruction of a Magnetic Field}, Comm. PDE 31 (2006), no. 11, p. 1639-1666.
%
\bibitem{[Sh]}{V.~Sharafutdinov, }{\it Integral Geometry of Tensor Fields}. VSP, Utrecht, the Netherlands, 1994.
%
\bibitem{StU3}
\newblock P. Stefanov a G. Uhlmann,
\newblock Stability estimates for the hyperbolic Dirichlet to Neumann map in anisotropic media,
\newblock \emph{Journal of Functional Analysis}, 154 (1998), 330-358.
%
\bibitem{StU4}
\newblock P. Stefanov and G. Uhlmann,
\newblock Rigidity for metrics with the same lengths of Geodesics, 
\newblock \emph{Mathematical Research Letters}, 5 (1998), 83-96.
%
%
\bibitem{StU2004}
\newblock P. Stefanov and G. Uhlmann,
\newblock Stability estimates for the X-ray transform of tensor fields and boundary rigidity,
\newblock \emph{Duke Mathematical Journal}, 123 (2004), 445-467.
%
\bibitem{StU2}
\newblock P. Stefanov a G. Uhlmann, 
\newblock Boundary rigidity and stability for simple metrics,
\newblock \emph{Journal of the American Mathematical Society}, 18 (2005), 975-1003
%
\bibitem{[SU2]}{P.~Stefanov and G.~Uhlmann, }{\it Stable Determination of Generic Simple Metrics from the Hyperbolic Dirichlet-to-Neumann Map}, International Math. Research Notices, 17 (2005), 1047-1061.
%
\bibitem{StU2009}
\newblock P. Stefanov,  and G. Uhlmann,
\newblock Local lens rigidity with incomplete data for a class of non-simple Riemannian manifolds,
\newblock \emph{Journal of Differential Geometry}, 82 (2009), 383-409.
%
\bibitem{[Sun]}{Z.~Sun, }{\it On continous dependence for an inverse initial boundary value
problem for the wave equation}, J. Math. Anal. App. 150 (1990),
188-204.
%
\bibitem{[Tzou]}{L. Tzou, }{\it Stability estimates for coefficients of magnetic
Schr\"odinger equation from full and partial boundary measurements},
Comm. Partial Differential Equations, 11 (2008), 1911-1952.
%
\bibitem{[Uhlmann]}{G.~Uhlmann, }{\it Inverse boundary value problems and applications}, Ast\'erisque 207, (1992), 153-221.
%
\bibitem{Wang}
\newblock J.-N. Wang,
\newblock Stability for the reconstruction of a Riemannian metric by boundary measurements
\newblock \emph{Inverse Problems},  15 (1999), 1177-1192.
%
\bibitem{WZ}
\newblock E. Wiechert and K. Zoeppritz, 
\newblock Uber Erdbebenwellen Nachr. Koenigl. Geselschaft Wiss,
\newblock \emph{Goettingen}, 4 (1907), 415-549.
\end{thebibliography}
\end{document}